\documentclass[11pt,a4paper,twoside]{article}

\usepackage[a4paper,inner=30mm,outer=30mm,top=37.5mm,bottom=37.5mm]{geometry}
\usepackage{setspace}
\usepackage[T1]{fontenc}
\usepackage[utf8]{inputenc}
\usepackage{amsmath,amsthm,amssymb,amsfonts,amstext,amsopn,mathrsfs,stmaryrd}
\usepackage{mathtools}
\usepackage{CJKutf8}
\usepackage{etoolbox}
\usepackage{pgfplots}
\pgfplotsset{compat=1.18}
\usepackage{booktabs}
\AtBeginEnvironment{thebibliography}{\begin{CJK}{UTF8}{gbsn}}
\AtEndEnvironment{thebibliography}{\end{CJK}}
\numberwithin{equation}{section}

\theoremstyle{plain}
\newtheorem{thm}{Theorem}[section]
\newtheorem{prop}{Proposition}[section]
\newtheorem{lem}[thm]{Lemma}

\theoremstyle{definition}

\theoremstyle{remark}

\usepackage{graphicx}
\usepackage{xcolor}
\usepackage{booktabs,threeparttable,multirow}
\usepackage{array,tabularx,makecell}

\newcolumntype{L}[1]{>{\raggedright\arraybackslash}p{#1}}
\newcolumntype{Y}{>{\raggedright\arraybackslash}X}
\usepackage{subcaption}
\usepackage{algorithm}
\usepackage{algpseudocode}
\usepackage[authoryear,round]{natbib}
\usepackage{authblk}
\usepackage{listings}
\usepackage{tcolorbox}
\usepackage{dsfont}
\let\epsilon\varepsilon

\newcommand{\E}{\mathbb{E}}
\newcommand{\PP}{\mathbb{P}}
\newcommand{\1}{\mathds{1}}

\newcommand{\dist}{\operatorname{dist}}
\newcommand{\Var}{\operatorname{Var}}
\newcommand{\Cov}{\operatorname{Cov}}

\newcommand{\TV}{\operatorname{TV}}
\newcommand{\HS}{\operatorname{HS}}

\newcommand{\den}{\mathrm{den}}

\newcommand{\test}{\mathrm{test}}
\newcommand{\est}{\mathrm{est}}
\newcommand{\sep}{\mathrm{sep}}

\newcommand{\op}{\mathrm{op}}

\newcommand{\calI}{\mathcal I}
\newcommand{\calV}{\mathcal V}

\usepackage{xurl}
\usepackage{enumerate}
\usepackage[colorlinks=true,linkcolor=teal,citecolor=brown,urlcolor=magenta]{hyperref}
\usepackage{longtable}

\newif\ifshowcomments
\showcommentstrue

\title{Adaptive Confidence Sets for Nonparametric Density Estimation under Local Differential Privacy}
\author[1]{Aditya Khan\thanks{Corresponding author: \href{mailto:aditya.khan@columbia.edu}{aditya.khan@columbia.edu}}}
\author[2]{Thibault Randrianarisoa}
\affil[1]{Department of Statistics, Columbia University}
\affil[2]{Department of Statistical Sciences, University of Toronto}

\date{}

\begin{document}
\maketitle

\begin{abstract}
In nonparametric density estimation without privacy constraints, it is known that a confidence set whose coverage holds uniformly over
a `rough' class of regularity $r\geq0$ can simultaneously attain, in $L^2$-diameter, the minimax rate over
a smaller, smoother class of regularity $s>r$ if and only if $s \le 2r$. We study such adaptive honest
confidence sets under
non-interactive $\alpha$-local differential privacy (LDP) constraints.
Our main result is that, for bounded $\alpha$, such `smooth-rate adaptation' is now possible if and only if $s \le 4r/3$. In this smaller range we construct an honest
private confidence set whose diameter is of the private minimax order on each of the two
classes. As
in the non-private case, the threshold arises because an honest set cannot, at a smooth
density, be narrower than the separation rate arising in a related testing problem. Under LDP we prove that this private testing rate is of order $(n\alpha^2)^{-2r/(4r+3)}$, which is at
most the private estimation rate $(n\alpha^2)^{-s/(2s+2)}$ over the smooth class exactly
when $s \le 4r/3$. For $s > 4r/3$, however, we show that smooth-rate adaptation becomes possible once one discards from the rough class the densities that lie outside
the smooth class but within an $L^2$-radius of the order of
the private testing rate.
\end{abstract}

\section{Introduction}
Local differential privacy \citep{warnerRandomizedResponseSurvey1965,kasiviswanathanWhatCanWe2011,DJW18} describes the situation in which each individual passes their own observation through a known randomising kernel before releasing it, so that a statistician only ever observes privatised reports. With no trusted curator, it is the most demanding of the usual trust models of differential privacy \citep{Dwork06b,Dwork06,dworkAlgorithmicFoundationsDifferential2014}, and this protection has a statistical price. Much of the recent literature has been devoted to quantifying it for estimation and testing, while the price of privacy for uncertainty quantification is far less well understood. We study this price in a canonical problem of nonparametric inference: constructing, from locally privatised data, honest confidence sets for a density of unknown smoothness.

A confidence set $C_n$ for the density $f_0$ of the observations is \emph{honest} \citep{liHonestConfidenceRegions1989} over a model $\mathcal{F}$ at level $1-\gamma$ if $\inf_{f\in\mathcal{F}}P_f(f\in C_n)\ge 1-\gamma$ where $P_f$ denotes the law of the data under density $f$, so that its coverage does not deteriorate as the truth varies within the model. Honesty alone is achieved trivially by $C_n=\mathcal{F}$, so the question of interest is whether the $L^2$-diameter of an honest set can, in addition, shrink at the minimax estimation rate of whichever smaller class $\mathcal{F}_0\subset\mathcal{F}$ contains $f_0$, without it being known. For estimation, adaptation to unknown smoothness is available in great generality, wavelet thresholding being the standard example in density estimation \citep{donohoDensityEstimationWavelet1996,juditsky2004minimax}, and an adaptive estimator need only achieve the minimax rate of the smallest class containing the true density, without having to quantify its own error. A confidence set, by contrast, must quantify this error: its coverage must hold over the whole model, while its diameter should shrink at the rate of the class containing $f_0$. This requires deciding from the data whether $f_0$ lies in the smoother class, so that adaptive confidence sets are governed by a testing problem rather than by an estimation problem \citep{lowNonparametricConfidenceIntervals1997,baraud2004confidence,CaiLow2006,robinsAdaptiveNonparametricConfidence2006,gineMathematicalFoundationsInfiniteDimensional2016}.

Let us make this precise in the setting of the present paper, for the $L^2$-diameter and a pair of Besov-type classes of regularities $0<r<s$. Without privacy, the smooth class can be estimated at rate $n^{-s/(2s+1)}$, but rough densities can be distinguished from it only at $L^2$-distances from the smooth class beyond the separation rate $n^{-2r/(4r+1)}$ \citep{ermakovAsymptoticallyMinimaxTests1990, ingsterNonparametricGoodnessofFitTesting2003,bullAdaptiveConfidenceSets2012}. If an honest set had a diameter of smaller order than this separation rate on the smooth class, it would yield a test separating that class from rough densities closer to it than the separation rate, which is impossible. Hence smooth-rate adaptation requires $n^{-s/(2s+1)}\gtrsim n^{-2r/(4r+1)}$, that is $s\le 2r$, a condition that is also sufficient \citep{bullAdaptiveConfidenceSets2012,gineMathematicalFoundationsInfiniteDimensional2016}. Analogous phenomena arise for other metrics and models \citep{genoveseConfidenceSetsNonparametric2005,genoveseAdaptiveConfidenceBands2008,hoffmannAdaptiveInferenceConfidence2011,carpentier2013honest,deoAdaptiveConfidenceSets2021}. When $s>2r$, adaptation becomes possible again once one removes from the rough class a `critical region' of densities lying within the separation rate of the smooth class \citep{bullAdaptiveConfidenceSets2012}.


The minimax theory of locally private estimation was developed by \citet{DJW13,duchiLocalPrivacyStatistical2013,DJW18}, who derived private analogues of the Le~Cam, Fano and Assouad methods. For univariate densities in Sobolev or Besov balls of regularity $t$, the minimax $L^2$-rate under $\alpha$-LDP, where $\alpha>0$ is the privacy level, is $n^{-t/(2t+1)}\vee(n\alpha^2)^{-t/(2t+2)}$ \citep{DJW13,DJW18,Butucea20}. Privacy thus alters the exponent of the rate, whereas in regular parametric models the $n^{-1/2}$ rate is preserved and only the constant deteriorates \citep{DJW18,steinberger2024efficiency,kalinin2025efficient}. Adaptation to unknown $t$ costs a logarithmic factor \citep{Butucea20}, which \citet{cai2025cost} have shown to be unavoidable, and adaptive procedures also exist for pointwise \citep{Kro21} and spectral \citep{kroll2020adaptive} density estimation. For testing, \citet{lam-weilMinimaxOptimalGoodnessoffit2021} obtained the separation rate $(n\alpha^2)^{-2t/(4t+3)}$ for goodness-of-fit testing of a simple null over Besov balls under non-interactive LDP. \citet{acharyaTestTrustOptimal2019} and \citet{BerrettButucea20} treated discrete distributions and H\"older densities were considered by \citet{Dubois23}, the last two showing that sequentially interactive mechanisms improve the separation rates. Locally private density estimation has also been studied for multivariate densities \citep{GK21}, under Hellinger loss \citep{Sart23} and in an adversarial framework \citep{AlbertAdversarialDP}, as have functional estimation \citep{RS20,ButuceaIssartel21,ButuceaQuadraticDP23,randrianarisoaMultipurposeLocallyDifferentiallyprivate2025}, regression and classification \citep{BGW21,gyorfi2022rate}.

\paragraph{Uncertainty quantification under privacy.}
In the central version of differential privacy, \citet{karwa2018finite} constructed finite-sample confidence intervals for a Gaussian mean together with lower bounds on their length, and valid intervals have since been obtained for medians \citep{drechsler2022nonparametric} and for $M$-estimators \citep{avella2023differentially}. General-purpose devices have also been transposed to privatised data: the parametric \citep{ferrando2022parametric} and nonparametric \citep{covington2025unbiased,chadha2024resampling,wang2025differentially} bootstrap, subsampling schemes that turn any private estimator into an interval \citep{shoham2025differentially}, `repro sample' inference \citep{awan2024simulation}, Bayesian inference by data augmentation \citep{ju2022data} and conformal prediction \citep{angelopoulos2022private}. In the local model, confidence intervals are available for Gaussian means \citep{gaboardi2019locally}, bounded means in nonparametric settings \citep{waudby2023nonparametric}, and conformal prediction sets \citep{penso2025privacy,zhang2025online}. In all of these works the target is a finite-dimensional quantity, and the width of the set is not required to adapt to an unknown characteristic of the sampling distribution, e.g. its smoothness. To the best of our knowledge, honest adaptive confidence sets in a nonparametric model have not been studied under any form of privacy.

Our contributions are threefold.
\begin{enumerate}
\item \textit{Private adaptation boundary.} Under bounded local privacy, we prove that honest adaptive $L^2$ confidence sets exist if and only if $s \le 4r/3$.

\item \textit{Composite smoothness-testing lower bound.} We extend the locally private goodness-of-fit lower-bound strategy of \citet{lam-weilMinimaxOptimalGoodnessoffit2021} to a composite null.

\item \textit{Adaptation under separation.} For $s > 4r/3$, we construct adaptive confidence sets after restricting the rough class to densities separated from the smooth class by a margin proportional to the private testing rate, up to logarithmic factors.
\end{enumerate}

The change in the adaptation boundary is driven by the unequal effect of local privacy on estimation and smoothness testing. Local privacy slows smoothness testing more strongly than estimation, changing the boundary from $s \le 2r$ to $s \le 4r/3$.


\section{Model and assumptions}
\label{sec:model}

\paragraph{Locally private observations.}
Let $X_1,\ldots,X_n$ be independent with an unknown density $f$ on $[0,1]$ and distribution $P_f$. Only reports $Z_1,\ldots,Z_n$ are observed: individual $i$ draws $Z_i$ from a known Markov kernel $Q_i(\cdot\mid X_i)$, with its own randomisation, so that
$$Q^n(dz_1,\ldots,dz_n\mid x_1,\ldots,x_n)=\prod_{i=1}^nQ_i(dz_i\mid x_i).$$
Such channels are called \textit{non-interactive}. Their output spaces have countably generated $\sigma$-fields, as $\mathbb R^d$ or finite sets do. The channel $Q_i$ is $\alpha$-locally differentially private ($\alpha$-LDP) if $Q_i(A\mid x)\leq e^\alpha Q_i(A\mid x')$ for every measurable $A$ and all $x,x'\in[0,1]$. We write $P_f^{Q^n}$ and $\E_f^{Q^n}$ for the law and the expectation of the reports.

\paragraph{Besov classes.}
We use the boundary-corrected wavelet basis $\{\varphi_{j_0k},\psi_{jk}\}$ of $L^2([0,1])$ of Appendix~\ref{app: bes_wav}, whose regularity exceeds a fixed $S>0$ and whose scaling functions span the constants. Write $\beta_j(f)=(\langle f,\psi_{jk}\rangle)_{k\in N_j}$ for the wavelet coefficients of $f$ at level $j\geq j_0$, and let $K_J$ be the orthogonal projection onto the span of the scaling functions and of the wavelets at levels $j_0,\ldots,J-1$. For $0<t<S$, the Besov norm is
$$\|f\|_{B^t_{2,\infty}}=\max\Bigl\{\|K_{j_0}f\|_2,\ \sup_{j\geq j_0}2^{jt}\|\beta_j(f)\|_{\ell^2}\Bigr\},$$
and, for $R\geq1$ and $U>1$, 
\[\Sigma(t,R,U)=\left\{f\ge0:\begin{array}{l}\int_0^1f(x)\,dx=1,\\ \|f\|_{B^t_{2,\infty}}\le R,\quad \|f\|_\infty\le U\end{array}\right\}.\]
These classes are nested, $\Sigma(s,R,U)\subseteq\Sigma(r,R,U)$ for $r<s$, and all contain the uniform density. For $f\in\Sigma(t,R,U)$ and $J>j_0$, the squared approximation error satisfies the deterministic bound $\|(I-K_J)f\|_2^2\leq b_t(J)=R^22^{-2Jt}/(1-2^{-2t})$ (Lemma~\ref{lem:besov-tail}), which plays the role of a squared bias below.

\paragraph{Benchmark rates.}
For $t,r>0$, put
$$
\begin{aligned}
\delta_{n,\est}(t,\alpha)&\coloneqq n^{-t/(2t+1)}\vee(n\alpha^2)^{-t/(2t+2)},\\
\rho_{n,\test}(r,\alpha)&\coloneqq n^{-2r/(4r+1)}\vee(n\alpha^2)^{-2r/(4r+3)}.
\end{aligned}
$$
The first is the minimax rate for estimating $f$ in $L^2$-norm over such classes, without and with local privacy \citep{DJW13,DJW18,Butucea20}. The second is the separation rate for testing smoothness (Section~\ref{sec:smoothness-testing}). Under bounded privacy, $\alpha\leq\bar\alpha$, the private terms dominate: $\delta_{n,\est}(t,\alpha)\asymp(n\alpha^2)^{-t/(2t+2)}$ and $\rho_{n,\test}(r,\alpha)\asymp(n\alpha^2)^{-2r/(4r+3)}$. The private rates terms above are then slower than their non-private counterparts. Finally, $\ell_n=\{\log(e+n\alpha^2)\}^{\nu+1/2}$, where $\nu>1$ is the exponent of the noise allocation of the privacy mechanism from Appendix~\ref{sec: privacy release}, adapted from the \citet{Butucea20}, is the logarithmic price of the adaptive estimator of Proposition~\ref{prop:adaptive-private-estimator} used in our constructions.

\paragraph{Honest and adaptive confidence sets.}
A confidence set $C_n=C_n(Z_1,\ldots,Z_n)$ is honest over a class $\mathcal F$ at level $1-\gamma$ if
$$\inf_{f\in\mathcal F}P_f^{Q^n}(f\in C_n)\geq1-\gamma.$$
Its $L^2$-diameter is $|C_n|_2=\sup_{g,h\in C_n}\|g-h\|_2$, with $|\varnothing|_2=0$, and we assume that the events involving $C_n$ are measurable. For $\delta_n\to0$, we say that $C_n$ has diameter of order $\delta_n$ over $\Sigma(t,R,U)$ if \[\sup_{f\in\Sigma(t,R,U)}P_f^{Q^n}(|C_n|_2>M_n\delta_n)\to0\] for every $M_n\to\infty$. Given a set $\mathcal R\subset(0,S)$ of regularities with smallest element $r$, we require honesty over the rough class $\Sigma(r,R,U)$, which contains $\Sigma(t,R,U)$ for every $t\in\mathcal R$, and ask whether the diameter can nevertheless be of the order of the estimation rate over $\Sigma(t,R,U)$, simultaneously for every $t\in\mathcal R$. For the impossibility results below, two regularities $r<s$ suffice, and $\Sigma(s,R,U)$ is then called the smooth class.

\paragraph{Standing assumptions.}
Throughout, all regularities lie in $(0,S)$, the constants $R\geq1$, $U>1$, $\nu>1$ and $\gamma\in(0,1)$ are fixed, and privacy is bounded, $0<\alpha\leq\bar\alpha$, which covers the levels used in practice\footnote{See \url{https://desfontain.es/blog/real-world-differential-privacy.html} for privacy levels used in real-world deployments of differential privacy.} \citep[e.g.][]{DJW18}. The level $\alpha$ may depend on $n$, as long as $n\alpha^2\to\infty$. Constants may depend on the regularities involved, on $R$, $U$, $S$, $\nu$, $\bar\alpha$, $\gamma$ and on the basis, but not on $n$ or $\alpha$.

\section{The adaptation boundary}
\label{sec:adaptation-boundary}

Our main result can be summarised as follows.

\begin{thm}
\label{thm:main-adaptation}
Let $\mathcal R\subset(0,S)$ be a set of regularities with smallest element $r$. Under the standing assumptions, the following statements hold.
\begin{enumerate}
\item If $\mathcal R\subseteq[r,4r/3]$, there exist confidence sets based on non-interactive $\alpha$-LDP reports that are honest over $\Sigma(r,R,U)$ at level $1-\gamma$, for every $n$, and whose diameter is of order $\ell_n\delta_{n,\est}(t,\alpha)$ over $\Sigma(t,R,U)$, for every $t\in\mathcal R$.
\item If $\mathcal R$ contains some $s>4r/3$ and $\gamma<1/2$, no confidence sets based on non-interactive $\alpha$-LDP reports can be honest over $\Sigma(r,R,U)$ at level $1-\gamma$ and have diameter of order $\ell_n\delta_{n,\est}(s,\alpha)$ over $\Sigma(s,R,U)$.
\end{enumerate}
\end{thm}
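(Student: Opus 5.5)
The proof splits into the two parts of the statement: an upper bound by explicit construction and a lower bound by reduction to testing.

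\emph{Part 1 (upper bound).} The construction follows the non-private Bull--Nickl scheme, with privatised U-statistics in place of the usual ones. Split the sample into two halves of equal size. On the first half, compute the adaptive private wavelet estimator $\hat f_n$ of Proposition~\ref{prop:adaptive-private-estimator}. Its $L^2$-risk over $\Sigma(t,R,U)$ is of order $\ell_n\delta_{n,\est}(t,\alpha)$, uniformly over $t\in\mathcal R$. On the second half, use the privatised wavelet coefficients released by the mechanism of Appendix~\ref{sec: privacy release} to form an unbiased U-statistic $\hat T_n$ for
\[
\|K_J(f-\hat f_n)\|_2^2,
\]
conditionally on the first half. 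The cut-off $J$ is chosen with $2^J\asymp(n\alpha^2)^{2/(4r+3)}$.

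Under $\alpha$-LDP, the variance of $\hat T_n$ is of order
\[
\frac{2^J}{(n\alpha^2)^2}+\frac{\|K_J(f-\hat f_n)\|_2^2}{n\alpha^2}.
\]
The first term gives a standard deviation $2^{J/2}/(n\alpha^2)$; the second is absorbed by the usual $ab\le a^2+b^2$ trick. The bias $b_r(J)$ over the rough class is $2^{-2Jr}$. With the above $J$, both the bias and $2^{J/2}/(n\alpha^2)$ are of order $\rho_{n,\test}(r,\alpha)^2$.

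Define
\[
C_n=\Bigl\{f\in\Sigma(r,R,U):\ \|f-\hat f_n\|_2^2\le \hat T_n+z_\gamma\,\rho_{n,\test}(r,\alpha)^2\Bigr\}
\]
for a large constant $z_\gamma$. Honesty over $\Sigma(r,R,U)$ for every $n$ follows from Chebyshev's inequality applied conditionally on the first half, together with the deterministic bias bound of Lemma~\ref{lem:besov-tail}.

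For the diameter, take $f\in\Sigma(t,R,U)$. With high probability, $\hat T_n\lesssim \|f-\hat f_n\|_2^2+\rho^2\lesssim \ell_n^2\delta_{n,\est}(t,\alpha)^2+\rho^2$. Hence $|C_n|_2\lesssim \ell_n\delta_{n,\est}(t,\alpha)+\rho_{n,\test}(r,\alpha)$. Finally, $\rho_{n,\test}(r,\alpha)\lesssim\delta_{n,\est}(t,\alpha)$ whenever
\[
\frac{2r}{4r+3}\ge\frac{t}{2t+2},
\]
which rearranges to $4r(t+1)\ge t(4r+3)$, that is, $t\le 4r/3$.

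\emph{Part 2 (lower bound).} Argue by contradiction. Suppose $C_n$ is honest and has diameter of order $\ell_n\delta_{n,\est}(s,\alpha)$ over $\Sigma(s,R,U)$. Consider the test
\[
\Psi_n=\1\bigl\{C_n\cap\Sigma(s,R,U)=\varnothing\ \text{or}\ |C_n|_2>M_n\ell_n\delta_{n,\est}(s,\alpha)\bigr\}.
\]
Under $f\in\Sigma(s,R,U)$, honesty and the diameter bound give type-I error at most $\gamma+o(1)$.

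Next take any $f\in\Sigma(r,R,U)$ with $\|f-\Sigma(s,R,U)\|_2> M_n\ell_n\delta_{n,\est}(s,\alpha)$. If $\Psi_n=0$, then $C_n$ contains a smooth element and has small diameter, so $C_n$ cannot contain $f$. Therefore $P_f(\Psi_n=0)\le\gamma$.

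Summing, the test has total error at most $2\gamma+o(1)<1$, since $\gamma<1/2$. It therefore separates the composite null $\Sigma(s,R,U)$ from rough alternatives at distance $M_n\ell_n\delta_{n,\est}(s,\alpha)$. For $s>4r/3$, the exponent comparison above shows this distance is $o(\rho_{n,\test}(r,\alpha))$, because the polynomial gap beats $M_n\ell_n$ when $M_n$ grows slowly. This contradicts the composite smoothness-testing lower bound of Section~\ref{sec:smoothness-testing}. That bound states that every non-interactive $\alpha$-LDP test has total error tending to $1$ at separations $o((n\alpha^2)^{-2r/(4r+3)})$.

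\emph{Main obstacle.} The hardest step is that composite private testing lower bound, which Part~2 invokes. I expect it to be established in Section~\ref{sec:smoothness-testing} by extending the Lam-Weil et al.\ argument to the composite null. The construction perturbs the uniform density (which lies in $\Sigma(s,R,U)$) by random-sign wavelets at level $2^J\asymp(n\alpha^2)^{2/(4r+3)}$. One must check that these perturbations have distance $\asymp\rho$ from the whole smooth class, not merely from the uniform density; this holds because their high-frequency energy far exceeds $R^22^{-2Js}$. One then bounds the $\chi^2$-divergence of the private mixture via the LDP contraction of the channels.

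In Part~1, the main care point is getting the variance of the privatised U-statistic right uniformly over the rough class.
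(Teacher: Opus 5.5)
Your Part~2 is the paper's argument: the test of Proposition~\ref{prop:testing-reduction}, combined with Proposition~\ref{thm:testing-lower}. Apply the latter with $\rho_n=2v_n$, say, since $H_1(\rho)$ is defined by a non-strict inequality. The architecture of Part~1 is also the paper's.

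\textbf{The gap: the variance of $\hat T_n$.} This is the one step of Part~1 where privacy actually enters, and it is wrong. Your formula $2^J/(n\alpha^2)^2+\|K_J(f-\hat f_n)\|_2^2/(n\alpha^2)$ is the non-private one with $n$ replaced by $n\alpha^2$. Under the level-weighted Laplace release, a coefficient at level $j$ receives noise of variance $\asymp(j\vee1)^{2\nu}2^j/\alpha^2$, because the sensitivity scales with $\|\psi_{jk}\|_\infty\asymp2^{j/2}$. Hence, by Lemmas~\ref{lem:release-covariance}, \ref{lem:projected-loss-moments} and~\ref{lem:finite-resolution-rates}:
\begin{itemize}
\item the degenerate part of $\hat T_n$ has standard deviation $w_J\asymp J^{2\nu}2^{3J/2}/(n\alpha^2)$;
\item the linear part has variance $4v_J\|K_J(f-\hat f_n)\|_2^2$, with $v_J\asymp J^{2\nu}2^J/(n\alpha^2)$.
\end{itemize}
Your next claim fails even on its own terms. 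At $2^J\asymp(n\alpha^2)^{2/(4r+3)}$ one has $2^{J/2}/(n\alpha^2)=(n\alpha^2)^{-(4r+2)/(4r+3)}=o\{\rho_{n,\test}(r,\alpha)^2\}$, not order $\rho^2$. Balancing $2^{-2Jr}$ against your standard deviation would give $2^J\asymp(n\alpha^2)^{2/(4r+1)}$ and a floor $(n\alpha^2)^{-2r/(4r+1)}$. That would mean ``adaptation'' for all $t\le4r$, contradicting your own Part~2. Your $J$ is right only because it balances the true term $2^{3J/2}/(n\alpha^2)$. That extra factor $2^J$ is exactly what produces the exponent $4r+3$ and the boundary $4r/3$.

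\textbf{Consequence: the displayed set is not honest as written.} Three repairs are needed; together they give the paper's proof of Theorem~\ref{thm:projected-loss}.
\begin{itemize}
\item \emph{Coefficient on $\hat T_n$.} Write $q^2=\|K_J(f-\hat f_n)\|_2^2$. Conditional Chebyshev gives $q^2\le\hat T_n+c\{4v_Jq^2+w_J^2\}^{1/2}$. Since $v_J\gg\rho_{n,\test}(r,\alpha)^4$ for $r>1/4$, the term $c\sqrt{4v_J}\,q$ is not $O(\rho^2)$ when $q$ is of order one. The absorption you mention gives $q^2\le2\hat T_n^++4c^2v_J+2cw_J$, so the radius needs a factor larger than one on $\hat T_n^+$. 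The paper solves the quadratic exactly and uses $\mathcal H_{\gamma,J}(u)\le2u+F_\gamma(J)$.
\item \emph{Logarithmic factor in the margin.} At your $J$, $w_J\asymp J^{2\nu}\rho_{n,\test}(r,\alpha)^2$, so a constant $z_\gamma$ is too small. The margin must carry a power of $\log(n\alpha^2)$, which $\ell_n^2$ covers. The alternative is to use a flat Laplace release at the fixed level $J$ on the second split.
\item \emph{Centre.} Lemma~\ref{lem:besov-tail} bounds $\|(I-K_J)f\|_2^2$, not $\|(I-K_J)(f-\hat f_n)\|_2^2$. Moreover, $\hat f_n$ has coefficients up to level $K_m-1$, which exceeds $J$ when $r>1/4$. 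Centring at $K_J\hat f_n$, as the paper does, makes the tail deterministically at most $b_r(J)$.
\end{itemize}

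\textbf{A remark on your lower-bound sketch.} This does not affect your proof, which only invokes Proposition~\ref{thm:testing-lower}. Still, random signs on the fixed wavelets $\psi_{Jk}$ do not work against every channel. A channel releasing a privatised bounded projection onto the single direction $L^{-1/2}\sum_k\psi_{Jk}$ detects that prior once $\sqrt n\,\alpha L^{-r-1/2}\to\infty$, which holds at the critical $L$. Following Lam-Weil et al., the paper instead perturbs along the eigenbasis of $\langle\cdot,\cdot\rangle_Q$, with amplitudes $\widetilde\lambda_j^{-1/2}$.
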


Adaptation is thus possible exactly over ranges whose largest regularity is at most $4/3$ times the smallest, and the sets of Section~\ref{sec:construction} achieve it while depending on $\mathcal R$ only through $r$. The second statement involves only $r$ and $s$: a single class beyond the boundary rules out adaptation.

Without privacy, the corresponding range is $[r,2r]$ \citep{bullAdaptiveConfidenceSets2012,gineMathematicalFoundationsInfiniteDimensional2016}. In both cases, the boundary comes from comparing the two benchmark rates: a set that adapts to $\Sigma(t,R,U)$ induces a test of $\Sigma(t,R,U)$ against rough densities at a distance of the order of its diameter over $\Sigma(t,R,U)$ (Section~\ref{sec:smoothness-testing}), so that adaptation to $t$ requires $\delta_{n,\est}(t,\alpha)\gtrsim\rho_{n,\test}(r,\alpha)$, up to logarithmic factors. Under bounded privacy, this reads $t/(2t+2)\leq2r/(4r+3)$, that is, $t\leq4r/3$. Without privacy, it reads $t/(2t+1)\leq2r/(4r+1)$, that is, $t\leq2r$. The pictures changes as the two rates, for estimation and testing, react differently to privacy.

The factor $\ell_n$ above is the price for adaptation incurred by the estimator used in the construction. It does not affect the boundary as it is of polynomial order, and the bounds steps from polynomial discrepancy.

\section{Smoothness testing}
\label{sec:smoothness-testing}

The impossibility result in Theorem \ref{thm:main-adaptation} involves two regularities $r<s$, which we fix in this section. Let $\dist_2(f,A)=\inf_{g\in A}\|f-g\|_2$ for $A\subseteq L^2([0,1])$. For $\rho>0$, we test $H_0:f\in\Sigma(s,R,U)$ against $f\in H_1(\rho)$, the set of densities $f\in\Sigma(r,R,U)$ with $\dist_2(f,\Sigma(s,R,U))\geq\rho$. The null is composite and the alternatives are far from every smooth density.

\begin{prop}
\label{thm:testing-lower}
Under the standing assumptions, let $\rho_n=o\{\rho_{n,\test}(r,\alpha)\}$. Then, for every sequence of non-interactive $\alpha$-LDP channels and every sequence of tests $\Psi_n=\Psi_n(Z_1,\ldots,Z_n)$,
$$
\begin{aligned}
\liminf_{n\to\infty}\Big[&\sup_{f\in\Sigma(s,R,U)}P_f^{Q^n}(\Psi_n=1)\\
&+\sup_{f\in H_1(\rho_n)}P_f^{Q^n}(\Psi_n=0)\Big]\geq1.
\end{aligned}
$$
\end{prop}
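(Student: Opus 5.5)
We reduce the composite problem to a simple-versus-mixture problem and then follow the private goodness-of-fit lower-bound strategy of \citet{lam-weilMinimaxOptimalGoodnessoffit2021}. Under bounded privacy $\rho_{n,\test}(r,\alpha)\asymp(n\alpha^2)^{-2r/(4r+3)}$, so we may work with the private term only.

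Take the uniform density $f_0\equiv1\in\Sigma(s,R,U)$ as the null representative. Fix a resolution level $J$ with $2^J\asymp(n\alpha^2)^{2/(4r+3)}$, multiplied by a small constant. For Rademacher signs $\eta=(\eta_k)_{k\in N_J}$ define
$$f_\eta=1+\epsilon\sum_{k\in N_J}\eta_k\psi_{Jk},\qquad \epsilon=c\,2^{-J(r+1/2)}.$$
Two constraints fix the constant $c$. Since $\int\psi_{Jk}=0$, each $f_\eta$ integrates to one. Since $\|\psi_{Jk}\|_\infty\lesssim2^{J/2}$ with bounded overlaps, $\|f_\eta-1\|_\infty\lesssim c2^{-Jr}\to0$, so $f_\eta$ is nonnegative and bounded by $U$. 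Moreover $2^{Jr}\|\beta_J(f_\eta)\|_{\ell^2}=c$, so $f_\eta\in\Sigma(r,R,U)$ for $c$ small enough.

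The first key step is to show that $f_\eta$ lies at distance at least a constant times $\epsilon2^{J/2}\asymp2^{-Jr}\asymp(n\alpha^2)^{-2r/(4r+3)}$ from the whole smooth class; this is where the composite null matters. For any $g\in\Sigma(s,R,U)$,
$$\|f_\eta-g\|_2\ge\|\beta_J(f_\eta)\|_{\ell^2}-\|\beta_J(g)\|_{\ell^2}\ge c2^{-Jr}-R2^{-Js}\ge \tfrac c2 2^{-Jr}$$
for $n$ large, because $s>r$. Hence $\dist_2(f_\eta,\Sigma(s,R,U))\gtrsim(n\alpha^2)^{-2r/(4r+3)}$. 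This exceeds $\rho_n$ for large $n$, because $\rho_n=o(\rho_{n,\test})$. Adjusting the constant in $2^J$ (so that $\rho_n\le\tfrac c2 2^{-Jr}$ eventually) places every $f_\eta$ in $H_1(\rho_n)$.

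The sum of errors is then bounded below by $1-\TV(P_{f_0}^{Q^n},\bar P)$, where $\bar P=\E_\eta P_{f_\eta}^{Q^n}$ is the mixture over the random signs. It therefore suffices to show $\TV\to0$, and we do this by bounding the $\chi^2$ divergence. Write $q_i(z\mid x)$ for the density of $Q_i$ with respect to a dominating measure, and $m_i(z)=\int q_i(z\mid x)\,dx$ for the marginal under the uniform density. Then
$$1+\chi^2(\bar P,P_{f_0}^{Q^n})=\E_{\eta,\eta'}\prod_{i=1}^n\Bigl(1+\epsilon^2\sum_{k,k'}\eta_k\eta'_{k'}\,G^{(i)}_{kk'}\Bigr),$$
where
$$G^{(i)}_{kk'}=\int\frac{\langle q_i(z\mid\cdot),\psi_{Jk}\rangle\,\langle q_i(z\mid\cdot),\psi_{Jk'}\rangle}{m_i(z)}\,dz .$$
The $\alpha$-LDP property gives $q_i(z\mid x)/m_i(z)\in[e^{-\alpha},e^\alpha]$, so $|q_i(z\mid x)/m_i(z)-1|\lesssim\alpha$ under $\alpha\le\bar\alpha$. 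Since $\int\psi_{Jk}=0$, the matrix $G^{(i)}$ is positive semidefinite, with operator norm $\lesssim\alpha^2\sup_{\|a\|=1}\|\sum_k a_k\psi_{Jk}\|_1^2\lesssim\alpha^22^{-J}$ and trace $\lesssim\alpha^2$.

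We use $1+u\le e^u$ and write $\bar G=\sum_iG^{(i)}$. This gives
$$1+\chi^2\le\E\exp\bigl(\epsilon^2\eta^\top\bar G\eta'\bigr).$$
By a Hanson--Wright or decoupled Rademacher chaos bound, this expectation is at most $\exp(C\epsilon^4\|\bar G\|_{\HS}^2)$ provided $\epsilon^2\|\bar G\|_{\op}$ is small. Combining the two bounds on $G^{(i)}$,
$$\|\bar G\|_{\HS}^2\le\|\bar G\|_{\op}\operatorname{tr}\bar G\lesssim n^2\alpha^42^{-J}.$$
Hence $\epsilon^4\|\bar G\|_{\HS}^2\lesssim c^4n^2\alpha^42^{-J(4r+3)}$, which is small by the choice of $J$. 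The same choice makes $\epsilon^2\|\bar G\|_{\op}\lesssim n\alpha^22^{-J(2r+2)}\to0$.

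The main obstacle is this Gram-matrix bound for arbitrary channels, together with the chaos moment-generating-function control. The operator-norm bound $\|\sum_ka_k\psi_{Jk}\|_1\lesssim2^{-J/2}\|a\|$ uses localisation of the wavelets. The integrability step requires care because $\eta^\top\bar G\eta'$ is a decoupled chaos. I would first condition on $\eta'$ and apply Hoeffding's inequality to obtain $\E\exp\bigl(\tfrac12\epsilon^4\eta'^\top\bar G^2\eta'\bigr)$, and then apply the Hanson--Wright inequality to this quadratic form, using the smallness of $\epsilon^4\|\bar G^2\|_{\op}$.
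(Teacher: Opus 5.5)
\textbf{There is a genuine gap.} Your proof breaks at the bound $\|G^{(i)}\|_{\op}\lesssim\alpha^22^{-J}$, and it cannot be repaired while keeping your prior.

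\emph{The inequality is false.} The estimate $\|\sum_ka_k\psi_{Jk}\|_1\lesssim2^{-J/2}\|a\|_{\ell^2}$ does not hold. In general one only has $\|\sum_ka_k\psi_{Jk}\|_1\leq\|a\|_{\ell^2}$. For $u=2^{-J/2}\sum_k\psi_{Jk}$, Lemma~\ref{lem:wavelet-basic-bounds} gives $\|u\|_\infty\leq C_\psi$ while $\|u\|_2=1$, so $\|u\|_1\geq\|u\|_2^2/\|u\|_\infty\geq1/C_\psi$.

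\emph{A channel can exploit this direction.} Let every individual release $Z_i=u(X_i)+(2C_\psi/\alpha)W_i$ with $W_i$ standard Laplace. This is $\alpha$-LDP, since $\sup_{x,x'}|u(x)-u(x')|\leq2C_\psi$. Testing $\mathcal T_iu$ against $\phi(z)=z$ and applying Cauchy--Schwarz gives
$\int(\mathcal T_iu)^2\,d\mu_i\geq\|u\|_2^4/\E_1^QZ_i^2\gtrsim\alpha^2$.
Hence $\|\bar G\|_{\op}\gtrsim n\alpha^2$ and $\epsilon^2\|\bar G\|_{\op}\gtrsim c^2(n\alpha^2)^{1/(4r+3)}\to\infty$.

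\emph{Your mixture is in fact detectable.} Under $f_\eta$ the reports have mean $\epsilon2^{-J/2}\sum_k\eta_k$. For most sign vectors this is of order $\epsilon\asymp(n\alpha^2)^{-(2r+1)/(4r+3)}$. Their empirical mean fluctuates only at scale $(n\alpha^2)^{-1/2}$, which is of smaller order because $(2r+1)/(4r+3)<1/2$. Thresholding $|n^{-1}\sum_iZ_i|$ therefore gives $\|\bar P-P_1^{Q^n}\|_{\TV}\to1$ for this non-interactive channel sequence, so no chaos inequality can close the argument. Against a prior fixed in advance, a channel can be tailored to one linear functional of the perturbation, and under local privacy such linear statistics are too powerful at this scale. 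The least favourable prior must be chosen after the channels.

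\textbf{The missing idea} is the channel-adapted prior of \citet{lam-weilMinimaxOptimalGoodnessoffit2021}, which the paper uses.
\begin{enumerate}
\item Diagonalise on $\calV_J$ the form $\langle u,u'\rangle_Q=n^{-1}\sum_i\int\mathcal T_iu\,\mathcal T_iu'\,d\mu_i$ (your $\bar G/n$), with orthonormal eigenvectors $u_j$ and eigenvalues $\lambda_j$.
\item Set $f_\omega=1+L^{-r-1}\sum_j\omega_j\widetilde\lambda_j^{-1/2}u_j$ with $\widetilde\lambda_j=\lambda_j/(e^{2\bar\alpha}\alpha^2)\vee L^{-1}$, so that poorly transmitted directions receive larger amplitudes.
\item The only channel property needed is the trace bound $\sum_j\lambda_j\leq(e^\alpha-1)^2$, which you already proved. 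It gives $\sum_j\widetilde\lambda_j\leq2$, hence $\sum_j\widetilde\lambda_j^{-1}\geq L^2/2$ by Cauchy--Schwarz. So $\|f_\omega-1\|_2\geq L^{-r}/\sqrt2$, and your separation argument goes through unchanged.
\item Because the form is diagonal in the $u_j$, the second moment factorises exactly as $\prod_j\cosh(y_j)$ with $y_j\leq nL^{-2r-2}e^{2\bar\alpha}\alpha^2$. This yields $\exp\{n^2e^{4\bar\alpha}\alpha^4/(2L^{4r+3})\}$ with no Hanson--Wright step.
\end{enumerate}
Restricting a plain Rademacher perturbation to the span of the $L/2$ eigenvectors of $\bar G$ with eigenvalues at most $2\operatorname{tr}(\bar G)/L$ would work equally well.

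The price is that the $u_j$ are not localised, so your deterministic sup-norm bound is lost. Positivity and $f_\omega\leq U$ then hold only for most sign vectors. This follows from Hoeffding's inequality on each coefficient $\beta_{Jk}(f_\omega)$ plus a union bound. The prior must be conditioned on that event, which costs a factor $(1-p)^{-2}$ in the second moment.
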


The proof, in Appendix~\ref{app:testing-lower-impossibility}, follows the standard route for testing lower bounds \citep{ingsterNonparametricGoodnessofFitTesting2003}. One constructs a probability distribution, or prior, on the alternatives, and shows that the law of the reports when $f$ is drawn from this prior is close in total variation to their law under the uniform density, which belongs to the null. No test can then have small errors under both hypotheses. We adapt here the channel-dependent prior that \citet{lam-weilMinimaxOptimalGoodnessoffit2021} use for a simple null to our composite null, as in the non-private composite lower bounds of \citet{bullAdaptiveConfidenceSets2012} and \citet{carpentierTestingRegularitySmooth2015}.

The link with confidence sets is then standard \citep{gineMathematicalFoundationsInfiniteDimensional2016}. Let $C_n$ be honest over $\Sigma(r,R,U)$ at level $1-\gamma$, with diameter of order $\ell_n\delta_{n,\est}(s,\alpha)$ over $\Sigma(s,R,U)$, and put $v_n=M_n\ell_n\delta_{n,\est}(s,\alpha)$ for a deterministic sequence $M_n\to\infty$. By definition, $P_f^{Q^n}(|C_n|_2>v_n)\to0$ uniformly over $\Sigma(s,R,U)$, so that $v_n$ is a slightly inflated version of the smooth diameter. Consider the test
\[\Psi_n=\1\{C_n\cap\Sigma(s,R,U)=\varnothing\ \text{or}\ |C_n|_2>v_n\}.\]
Under the null, it rejects only if $C_n$ fails to cover $f$ or is larger than $v_n$, which has probability at most $\gamma+o(1)$. At a density $f\in H_1(2v_n)$, it does not reject only if $C_n$ fails to cover $f$: otherwise, $C_n$ would contain $f$ together with a smooth density within distance $|C_n|_2\leq v_n$ of $f$, whereas $f$ is at distance at least $2v_n$ from the smooth class (Proposition~\ref{prop:testing-reduction}). For $\gamma<1/2$, the two errors therefore sum to at most $2\gamma+o(1)<1$ at the separation $2v_n$. When $s>4r/3$, the ratio $\ell_n\delta_{n,\est}(s,\alpha)/\rho_{n,\test}(r,\alpha)$ tends to zero polynomially fast, so that $M_n$ can be chosen to grow slowly enough for $2v_n=o\{\rho_{n,\test}(r,\alpha)\}$, which contradicts Theorem~\ref{thm:testing-lower}. This proves the second statement of Theorem~\ref{thm:main-adaptation}.

\section{The construction}
\label{sec:construction}

The confidence set behind the first statement of Theorem~\ref{thm:main-adaptation} depends on $\mathcal R$ only through its smallest element $r$, and its construction involves two independent sample splits. We partition the sample deterministically into groups $I_1$ and $I_2$ of sizes $m_1=\lfloor n/2\rfloor$ and $m_2=n-m_1$. The set is centred at an adaptive private estimator computed on $I_1$. Its radius is built on $I_2$, given this estimator, by bounding the squared distance between $f$ and the centre in two parts. The part carried by the levels above a resolution $J$ is at most $b_r(J)$ for every density of the rough class (see Section \ref{sec:model}). The part below $J$ is estimated without bias from the second split, and the estimate is enlarged by a margin that accounts for its variance, including the variance due to the privacy noise. The radius is the sum of the two bounds.

 For $J>j_0$, let $\calI_J$ index the wavelets at levels below $J$, denote these functions by $\psi_a$, $a\in\calI_J$, and put $\theta_J(h)=(\langle h,\psi_a\rangle)_{a\in\calI_J}$. The private release $Z_{i,J}$ of Appendix~\ref{sec: privacy release} adds to $(\psi_a(X_i))_{a\in\calI_J}$ independent Laplace noise with scales $\sigma_{J,a}$ satisfying $\sigma_{J,a}^2\asymp\alpha^{-2}(j\vee1)^{2\nu}2^j$ for a coefficient at level $j$: finer levels receive more noise, and the factor $(j\vee1)^{2\nu}$ spreads the privacy budget across levels. On $I_1$, we compute an adaptive wavelet-thresholding estimator $\widetilde f_{m_1}$ attaining the minimax rate of convergence for any regularity $0<t<S$, up to a logarithmic factor (see Proposition~\ref{prop:adaptive-private-estimator}).

Given $\widetilde f_{m_1}$ and $J>j_0$, the diagonal-free U-statistic
\[
\begin{aligned}
U_{m_2}(\widetilde f_{m_1},J)&=\frac{1}{m_2(m_2-1)}\sum_{\substack{i,i'\in I_2\\i\ne i'}}\\
&\quad\times\langle Z_{i,J}-\theta_J(\widetilde f_{m_1}),Z_{i',J}-\theta_J(\widetilde f_{m_1})\rangle
\end{aligned}
\]
is, given $I_1$, unbiased for the projected loss $\|K_J(f-\widetilde f_{m_1})\|_2^2$ (Lemma~\ref{lem:projected-loss-moments}). Its variance is controlled through
$$\mathsf V_J=U(I-e_Je_J^\top)+2\operatorname{diag}_{a\in\calI_J}(\sigma_{J,a}^2),\qquad e_J=\theta_J(1),$$
which bounds $\Cov_f^Q(Z_{i,J})$ in the positive semi-definite order whenever $\|f\|_\infty\leq U$ (Lemma~\ref{lem:release-covariance}), where its two terms account for sampling and for the privacy noise. With
$$v_J=\frac{\|\mathsf V_J\|_{\op}}{m_2},\qquad w_J=\Bigl\{\frac{2\operatorname{tr}(\mathsf V_J^2)}{m_2(m_2-1)}\Bigr\}^{1/2},$$
the conditional variance of $U_{m_2}(\widetilde f_{m_1},J)$ is at most $4v_J\|K_J(f-\widetilde f_{m_1})\|_2^2+w_J^2$, and $w_J$ is at most of order $2^{J/2}/m_2+(J\vee1)^{2\nu}2^{3J/2}/(m_2\alpha^2)$, the second term being due to privacy (Lemma~\ref{lem:finite-resolution-rates}).

Using that the variance of the U-statistic grows at most linearly with its expectation, it can then be turned into an upper confidence bound for the projected loss. There is an explicit increasing function $\mathcal H_{\gamma,J}$ such that, given the first split, $\|K_J(f-\widetilde f_{m_1})\|_2^2\leq\mathcal H_{\gamma,J}\{U_{m_2}(\widetilde f_{m_1},J)^+\}$ with probability at least $1-\gamma$ (Proposition~\ref{prop:projected-loss-upper}). We can then bound $\|f-\widetilde f_{m_1}\|_2^2\leq b_r(J) + \mathcal H_{\gamma,J}\{U_{m_2}(\widetilde f_{m_1},J)^+\}$ and select the resolution $J^\star$ balancing the two terms (Appendix~\ref{app:projected-loss-confidence}). The resolution $J^\star$ depends only on $m_2$, $\alpha$, $\gamma$ and the fixed parameters, so it is fixed before the data are seen.

With the centre $g_n=K_{J^\star}\widetilde f_{m_1}$ and the radius
\begin{equation}
\label{eq: Rn2}
R_n^2=b_r(J^\star)+\mathcal H_{\gamma,J^\star}\{U_{m_2}(g_n,J^\star)^+\},
\end{equation}
the confidence set is $C_n(\gamma)=\{h\in\Sigma(r,R,U):\|h-g_n\|_2^2\leq R_n^2\}$.

\begin{thm}
\label{thm:projected-loss}
Under the standing assumptions, $C_n(\gamma)$ is honest over $\Sigma(r,R,U)$ at level $1-\gamma$ for every $n$. For every $t\in[r,S)$, its diameter is of order $\ell_n\{\delta_{n,\est}(t,\alpha)\vee\rho_{n,\test}(r,\alpha)\}$ over $\Sigma(t,R,U)$.
\end{thm}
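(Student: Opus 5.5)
Honesty and diameter are handled separately, conditioning throughout on the first split $I_1$.

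\emph{Honesty.} Fix $f\in\Sigma(r,R,U)$ and condition on $I_1$, so that $g_n=K_{J^\star}\widetilde f_{m_1}$ is fixed; $J^\star$ is deterministic. Since $K_{J^\star}g_n=g_n$, we have
\[
\|f-g_n\|_2^2=\|(I-K_{J^\star})f\|_2^2+\|K_{J^\star}(f-g_n)\|_2^2 .
\]
The first term is at most $b_r(J^\star)$ by Lemma~\ref{lem:besov-tail}. The second is at most $\mathcal H_{\gamma,J^\star}\{U_{m_2}(g_n,J^\star)^+\}$ with conditional probability at least $1-\gamma$, by Proposition~\ref{prop:projected-loss-upper} applied with $\widetilde f_{m_1}$ replaced by $g_n$. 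This replacement is legitimate because that proposition only uses that the centre is $I_1$-measurable, that the reports on $I_2$ are independent of it, and that $\|f\|_\infty\le U$ (Lemmas~\ref{lem:release-covariance}, \ref{lem:projected-loss-moments}); no asymptotics enter. Since $f\in\Sigma(r,R,U)$ as well, $f\in C_n(\gamma)$ with conditional probability $\ge1-\gamma$. Integrating over $I_1$ gives honesty for every $n$.

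\emph{Diameter.} Since $|C_n|_2\le 2R_n$, it suffices to show that, uniformly over $f\in\Sigma(t,R,U)$, $R_n^2=O_P\bigl(\ell_n^2\{\delta_{n,\est}(t,\alpha)\vee\rho_{n,\test}(r,\alpha)\}^2\bigr)$. I would proceed in three steps.
\begin{enumerate}
\item \emph{Bias at $J^\star$.} The balancing choice of $J^\star$ in Appendix~\ref{app:projected-loss-confidence} equates $b_r(J)\asymp 2^{-2Jr}$ with the leading size of $w_J$, namely $J^{2\nu}2^{3J/2}/(m_2\alpha^2)$ (the non-private term $2^{J/2}/m_2$ is dominated under bounded $\alpha$). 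This gives $2^{J^\star}\asymp (n\alpha^2)^{2/(4r+3)}$ up to logarithms, so that $b_r(J^\star)\asymp w_{J^\star}\lesssim \ell_n^2\rho_{n,\test}(r,\alpha)^2$ by Lemma~\ref{lem:finite-resolution-rates}.
\item \emph{Size of $\mathcal H_{\gamma,J^\star}$.} From its explicit form, I expect $\mathcal H_{\gamma,J}(x)\lesssim x+v_J+w_J$, with constants depending on $\gamma$. This is the natural shape of inverting a bound where the variance is linear in the mean, $\mathrm{var}\le 4v_J\mu+w_J^2$.
\item \emph{Size of the U-statistic.} Chebyshev, using Lemma~\ref{lem:projected-loss-moments}, gives
\[
U_{m_2}^+\le \mu+O_P\bigl(\sqrt{v_J\mu}+w_J\bigr)\lesssim \mu+v_J+w_J,\qquad \mu=\|K_{J^\star}(f-g_n)\|_2^2\le\|f-\widetilde f_{m_1}\|_2^2 .
\]
By Proposition~\ref{prop:adaptive-private-estimator}, uniformly over $\Sigma(t,R,U)$, $\|f-\widetilde f_{m_1}\|_2^2=O_P(\ell_n^2\delta_{n,\est}(t,\alpha)^2)$, after adjusting constants for $m_1\asymp n$.
\end{enumerate}
Finally $v_{J^\star}=\|\mathsf V_{J^\star}\|_{\op}/m_2\lesssim (U+\alpha^{-2}J^{\star2\nu}2^{J^\star})/m_2$, which is of smaller order than $w_{J^\star}$. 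Collecting the bounds gives the claim, with $M_n\to\infty$ absorbing the stochastic constants uniformly.

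The main obstacle is step 2: tracking the explicit function $\mathcal H_{\gamma,J}$ and confirming that its additive inflation is of order $v_J+w_J$, with no worse dependence on $J$ or $\gamma$. A second point to check is that the logarithmic factors from $(J^\star)^{2\nu}$ in $w_{J^\star}$ are absorbed into $\ell_n^2=\{\log(e+n\alpha^2)\}^{2\nu+1}$. I would verify this directly from $J^\star\asymp\log(n\alpha^2)$, so that $(J^\star)^{2\nu}$ costs at most $\log^{2\nu}$ inside $\rho^2$ after re-balancing, which is within $\ell_n^2$.
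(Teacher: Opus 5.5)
Your proposal is correct and is essentially the paper's proof. The honesty step is identical, and the diameter bound follows the same chain: $|C_n(\gamma)|_2\le 2R_n$; then $\mathcal H_{\gamma,J}(u)\le 2u+F_\gamma(J)$, which is already the second part of Proposition~\ref{prop:projected-loss-upper}, so your Step~2 is not really an obstacle; then $q_{J^\star}(g_n)^2\le\|f-\widetilde f_{m_1}\|_2^2$, controlled through Proposition~\ref{prop:adaptive-private-estimator}; and finally Lemma~\ref{lem:finite-resolution-rates} for $b_r(J^\star)+F_\gamma(J^\star)$. The only difference is that the paper bounds $\E_f^{Q^n}R_n^2$ through $\E_2U_{m_2}(g_n,J^\star)^+$ and Jensen before applying Markov, whereas you use a conditional Chebyshev bound, which works equally well.

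One correction to Step~1: $J^\star$ is defined as a minimiser of $b_r+F_\gamma$, not by a balancing equation. Your claim $b_r(J^\star)\asymp w_{J^\star}$ is therefore neither established nor needed. Minimality against the comparison level $J^\circ$ in Lemma~\ref{lem:finite-resolution-rates} already gives $b_r(J^\star)+v_{J^\star}+w_{J^\star}\lesssim_\gamma\ell_n^2\rho_{n,\test}(r,\alpha)^2$, and it absorbs the $(J\vee1)^{2\nu}$ factor without any knowledge of the size of $J^\star$.
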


The proof is in Appendix~\ref{app:projected-loss-confidence}. Coverage holds for every $n$, without asymptotic approximation. The diameter combines the estimation rate of the centre with a floor $\ell_n\rho_{n,\test}(r,\alpha)$ due to the estimation of the projected loss, which is the testing rate of Section~\ref{sec:smoothness-testing}. Under bounded privacy, $\delta_{n,\est}(t,\alpha)\gtrsim\rho_{n,\test}(r,\alpha)$ exactly when $t\leq4r/3$. The single set $C_n(\gamma)$ therefore adapts simultaneously to every $t\in[r,4r/3]$, which proves the first statement of Theorem~\ref{thm:main-adaptation}. For $t>4r/3$, its diameter is of order $\ell_n\rho_{n,\test}(r,\alpha)$, which, for $\gamma<1/2$, no honest set can improve by more than logarithmic factors (Section~\ref{sec:smoothness-testing}).

\section{Beyond the boundary: adaptation under separation}
\label{sec:separated-adaptation}

When $\mathcal R$ contains some $s>4r/3$, the obstruction comes from rough densities whose distance to $\Sigma(s,R,U)$ is below the private testing rate. In the two-class problem formed by $r$ and $s$, following \citet{bullAdaptiveConfidenceSets2012}, we remove these densities and require honesty only over $\Sigma(s,R,U)\cup H_1(\rho_n)$, where $H_1(\rho)$ is the set from Section \ref{sec:smoothness-testing}.

The construction, detailed in Appendix~\ref{app:separated-adaptation}, uses three independent splits of sizes of order $n$. On the first, $\widetilde f_{m_1}$ from the previous section is projected in $L^2$ onto $\Sigma(s,R,U)$, giving a smooth reference $\overline f_s$ with risk at most $a_{m_1}(s)\lesssim\ell_n^2\delta_{n,\est}(s,\alpha)^2$ under the null (Proposition~\ref{prop:adaptive-private-estimator}). On the second, we test whether $f$ is smooth through the statistic $\widehat T=U_{m_2}(\overline f_s,J_{\sep})$ of Section~\ref{sec:construction}, which estimates the projected distance $\|K_{J_{\sep}}(f-\overline f_s)\|_2^2$, where the deterministic resolution $J_{\sep}$ results from a trade-off such that both Type-I and Type-II errors when testing $\widehat T>\tau$ are at most $\gamma_n=\min\{\gamma/2,\ell_n^{-2}\}$ (Appendix~\ref{app:separated-adaptation}), where 
\[\tau=\frac{1}{\gamma_n}\Bigl\{a_{m_1}(s)+\frac12\sqrt{4v_{J_{\sep}}a_{m_1}(s)+w_{J_{\sep}}^2}\Bigr\}.\] The estimated distance is small under the null, and large under $H_1(\rho_n)$, as $\overline f_s$ is smooth.

On the third split, we build the confidence set $C_n^{\den}(\gamma)=\{h\in\Sigma(t,R,U):\|h-\widetilde f_{m_3}\|_2^2\leq2a_{m_3}(t)/\gamma\}$, with $t=s$ if the above test accepts and $t=r$ if it rejects.

\begin{thm}
\label{thm:density-separated}
Assume the standing assumptions, let $r<s$, and let
$$\rho_n\geq M\ell_n^2\{\delta_{n,\est}(s,\alpha)\vee\rho_{n,\test}(r,\alpha)\}$$
for a sufficiently large constant $M$. The above confidence set $C_n^{\den}(\gamma)$ is honest over $\Sigma(s,R,U)\cup H_1(\rho_n)$ at level $1-\gamma$ for every $n$. Its diameter is of order $\ell_n\delta_{n,\est}(s,\alpha)$ over $\Sigma(s,R,U)$, and \[|C_n^{\den}(\gamma)|_2=O_{P_f^{Q^n}}\{\ell_n\delta_{n,\est}(r,\alpha)\}\] uniformly over $H_1(\rho_n)$.
\end{thm}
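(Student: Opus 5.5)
The proof conditions on the first two splits. It then uses three ingredients: the risk bound of the adaptive estimator (Proposition~\ref{prop:adaptive-private-estimator}), the moment bounds for the U-statistic (Lemma~\ref{lem:projected-loss-moments} and the conditional variance bound $4v_J\|K_J(f-\overline f_s)\|_2^2+w_J^2$), and Chebyshev/Markov inequalities. Write $\phi=\1\{\widehat T>\tau\}$. Coverage is examined separately on $\{\phi=0\}$ and $\{\phi=1\}$.

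\textbf{Honesty.} Take $f\in\Sigma(s,R,U)$. The estimator $\widetilde f_{m_3}$ uses only the third split. Proposition~\ref{prop:adaptive-private-estimator} gives $\E_f\|\widetilde f_{m_3}-f\|_2^2\le a_{m_3}(t)$ for both $t=s$ and $t=r$, since $f\in\Sigma(r,R,U)$ as well. Markov's inequality then gives $P_f(\|f-\widetilde f_{m_3}\|_2^2>2a_{m_3}(t)/\gamma)\le\gamma/2$ for each fixed $t$. Because the third split is independent of $\phi$, this holds conditionally on $\phi$. Also $f\in\Sigma(t,R,U)$ for either choice of $t$, so $f$ fails to be covered with probability at most $\gamma/2$ whatever the test decides.

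Now take $f\in H_1(\rho_n)$. If the test rejects, then $t=r$, $f\in\Sigma(r,R,U)$, and the same Markov bound gives failure probability $\le\gamma/2$. If the test accepts, $f\notin\Sigma(s,R,U)$, so noncoverage is certain. Hence noncoverage has probability at most $\gamma/2+P_f(\phi=0)$, and it suffices to show the Type-II error is at most $\gamma_n\le\gamma/2$.

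\textbf{Type-II error.} This is the main step. On $H_1(\rho_n)$, $\overline f_s\in\Sigma(s,R,U)$, so $\|f-\overline f_s\|_2\ge\rho_n$. Both $f$ and $\overline f_s$ lie in $\Sigma(r,R,U)$, so Lemma~\ref{lem:besov-tail} applied to $f-\overline f_s$ gives
\[\|K_{J_{\sep}}(f-\overline f_s)\|_2^2\ge\rho_n^2-4b_r(J_{\sep})\ge\rho_n^2/2.\]
The last inequality requires $2^{-2J_{\sep}r}\lesssim\rho_n^2$. Write $D=\|K_{J_{\sep}}(f-\overline f_s)\|_2^2$. Chebyshev's inequality gives
\[P(\widehat T\le\tau)\le\frac{4v_JD+w_J^2}{(D-\tau)^2},\]
so we need $D\ge2\tau$ and $4v_JD+w_J^2\le\gamma_n D^2/4$.

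We choose $J_{\sep}$ so that $2^{-2J_{\sep}r}\asymp\rho_{n,\test}(r,\alpha)^2$. Lemma~\ref{lem:finite-resolution-rates} then gives $w_{J_{\sep}}\lesssim(\log)^{2\nu}\rho_{n,\test}^2$, using the balance $2^{3J/2}/(n\alpha^2)\asymp2^{-2Jr}$. It also gives $v_J\lesssim 2^{J}/(n\alpha^2)\ll w_J$.

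The factor $M\ell_n^2$ in $\rho_n$ must absorb two things: the factor $\gamma_n^{-1}\le\ell_n^2$ in $\tau$, and the polylogarithmic factor in $w_J$. We have $\tau\lesssim\ell_n^2\{a_{m_1}(s)+\sqrt{v_Ja_{m_1}(s)}+w_J\}$ with $a_{m_1}(s)\lesssim\ell_n^2\delta_{n,\est}(s,\alpha)^2$. One must check that $\tau\lesssim\rho_n^2/M$ for large $M$. This is where the exact powers of $\ell_n$ are delicate, and I expect bookkeeping the logarithms to be the main obstacle. If it falls short, the fallback is to allow $J_{\sep}$ slightly coarser by a logarithmic factor.

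\textbf{Diameter.} Every element of $C_n^{\den}$ is within $\{2a_{m_3}(t)/\gamma\}^{1/2}$ of the centre, so
\[|C_n^{\den}|_2\le2\{2a_{m_3}(t)/\gamma\}^{1/2}\lesssim\ell_n\delta_{n,\est}(t,\alpha).\]
Because $\delta_{n,\est}(s,\alpha)\le\delta_{n,\est}(r,\alpha)$, this gives the $O_P\{\ell_n\delta_{n,\est}(r,\alpha)\}$ bound on $H_1(\rho_n)$ deterministically.

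Under $\Sigma(s,R,U)$, the diameter exceeds $M_n\ell_n\delta_{n,\est}(s,\alpha)$ only if $\phi=1$. So it suffices that the Type-I error $\to0$ uniformly, which holds since it is at most $\gamma_n\to0$. For that bound, $\E[\widehat T\mid I_1]=D\le\|f-\overline f_s\|_2^2$, and $\E\|f-\overline f_s\|_2^2\le 4a_{m_1}(s)$ by the projection property. Hence Markov's inequality applied to $D$, combined with Chebyshev's inequality for $\widehat T-D$ with variance $\le4v_JD+w_J^2$, yields $P(\widehat T>\tau)\lesssim\gamma_n$. This is exactly how $\tau$ was designed, up to adjusting constants inside $\gamma_n$.
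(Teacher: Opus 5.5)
Your proof is correct, but it is organised differently from the paper's.

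\textbf{The null.} Your argument is sharper than the paper's. A smooth $f$ also lies in $\Sigma(r,R,U)$ with risk at most $a_{m_3}(r)$, so $C_r$ covers it, as $C_s$ does, with probability at least $1-\gamma/2$. Coverage over $\Sigma(s,R,U)$ therefore holds whatever the test decides, and the type-I error matters only for the diameter. The paper instead union-bounds non-coverage of $C_s$ with a type-I error of at most $\gamma_n\le\gamma/2$.

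\textbf{The alternatives.} You fix $J_{\sep}$ by $2^{-2J_{\sep}r}\asymp\rho_{n,\test}(r,\alpha)^2$ and apply Chebyshev with crude margins ($D\ge\rho_n^2/2$, $D\ge2\tau$). The paper instead takes $J_{\sep}$ to minimise $b_{r,s}(J)+\mathcal H_{\gamma_n,J}(\tau_J)$ and uses Cantelli with the exact root $\mathcal H_{\gamma_n,J}(\tau_J)$ (Proposition~\ref{prop:separated-test}). It also sets $C_n^{\den}(\gamma)=\Sigma(r,R,U)$ whenever $\rho_n<\rho_{\mathrm{FS}}$. That fallback gives honesty for every $n$ without chasing constants. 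Bounding the criterion at a resolution like yours is exactly how the paper shows $\rho_{\mathrm{FS}}\lesssim_\gamma\ell_n^2\{\delta_{n,\est}(s,\alpha)\vee\rho_{n,\test}(r,\alpha)\}$.

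Since you have no fallback, add two remarks to cover every $n$. First, $H_1(\rho_n)=\varnothing$ when $\rho_n>2\sqrt U$, so only your test-free argument is needed there. Otherwise $\rho_n\le2\sqrt U$ forces $n\alpha^2$ above a large constant. Second, use $\gamma_n^{-1}\le(2/\gamma)\ell_n^2$, because $\gamma_n^{-1}\le\ell_n^2$ holds only once $\ell_n^2\ge2/\gamma$.

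\textbf{The logarithms.} The bookkeeping you worried about closes at your resolution, with no coarsening. Since $J_{\sep}\lesssim\log(e+n\alpha^2)$ and $\ell_n^2=\{\log(e+n\alpha^2)\}^{2\nu+1}$, you get $\gamma_n^{-1}w_{J_{\sep}}\lesssim_\gamma\ell_n^2\{\log(e+n\alpha^2)\}^{2\nu}\rho_{n,\test}(r,\alpha)^2\le\ell_n^4\rho_{n,\test}(r,\alpha)^2$. Also $\gamma_n^{-1}a_{m_1}(s)\lesssim_\gamma\ell_n^4\delta_{n,\est}(s,\alpha)^2$. Hence $\tau$ and $\gamma_n^{-1}(v_{J_{\sep}}+w_{J_{\sep}})$ are both $O(\rho_n^2/M^2)$.

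\textbf{Type-I error.} Your bound (Markov on $D$, then Chebyshev on $\{D\le\tau/2\}$) gives $O(\gamma_n)$ rather than $\gamma_n$. It also needs $v_J\le w_J/\sqrt2$, which holds because $\operatorname{tr}(\mathsf V_J^2)\ge\|\mathsf V_J\|_{\op}^2$. The paper uses a single Markov bound on $\widehat T_J^+$, through $\E_2\widehat T_J^+\le q_J(\overline f_s)^2+\frac12\{4v_Jq_J(\overline f_s)^2+w_J^2\}^{1/2}$. This gives exactly $\gamma_n$ and is what $\tau$ is built for. Your diameter argument only needs $O(\gamma_n)\to0$, so your bound suffices.
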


When $s>4r/3$, the required separation is of order $\ell_n^2\rho_{n,\test}(r,\alpha)$. By the argument of Section~\ref{sec:smoothness-testing}, no set that is honest over $\Sigma(s,R,U)\cup H_1(\rho_n)$ can have smooth-rate diameter when $\rho_n=o\{\rho_{n,\test}(r,\alpha)\}$, so the removed region cannot be made smaller, up to the factor $\ell_n^2$ (which we have not tried to optimise). We note that using the set of Section~\ref{sec:construction} after test rejection would add adaptation to every $t\in[r,4r/3]$ among separated densities, but we do not pursue this here.

\section{Numerical experiments}
\label{sec:numerics}

The theory of the preceding sections shows that local privacy affects the confidence set of Section~\ref{sec:construction} in two ways. First, the privacy noise is a second source of variability, besides sampling, in the statistic that estimates the projected loss. Second, this additional variability raises the price of honesty: over $\Sigma(t,R,U)$, the diameter is of order $\ell_n\{\delta_{n,\est}(t,\alpha)\vee\rho_{n,\test}(r,\alpha)\}$, where the floor $\rho_{n,\test}(r,\alpha)$ is set by the class $\Sigma(r,R,U)$ over which coverage is required, and under privacy this floor exceeds the estimation rate as soon as $t>4r/3$. We use simulations to examine how these effects appear at finite sample sizes. We focus on two questions. First, does accounting for the covariance introduced by the privacy mechanism matter for coverage? Second, how do the privacy level and the size of the honesty class affect the diameter of the resulting confidence set?

We study these questions using three fixed generating densities with different shapes: sine, triangular and normalised exponentiated Brownian (in decreasing order of regularity). To implement the continuous-density construction numerically, we approximate densities on a fine regular grid of $M=4096$ points on $[0,1]$ and use periodic Daubechies wavelets (with $S=8$) with a discrete Gram correction. Further implementation details, including the resulting finite-grid version of the Besov classes, are given in Appendix~\ref{app:numerical-implementation}. We set $R=1$, $U=3$, $\nu=1.25$ and $\gamma=0.05$, corresponding to nominal coverage $0.95$. Unless otherwise stated, the honesty class has regularity $r=0.5$ (matching the regularity of the Brownian dendsity), and all three generating densities belong to its finite-grid counterpart.

We consider sample sizes $n\in\{1000,5000,20000\}$ and $23$ privacy levels ranging from $\alpha=0.1$ to $5$, with larger $\alpha$ corresponding to weaker privacy. We also include the non-private construction, denoted by $\alpha=\infty$. Each configuration is evaluated over $5000$ Monte Carlo replications. Within a replication, procedures are compared using the same underlying samples, and private procedures share the same Laplace noise across values of $\alpha$. This reduces Monte Carlo variation in our comparisons, without changing the marginal distribution of any individual procedure.

\paragraph{Privacy and coverage.}
We first ask whether the additional uncertainty introduced by the privacy mechanism must indeed be accounted for to obtain reliable finite-sample coverage. In the construction of Section~\ref{sec:construction}, the covariance bound for the released wavelet coefficients contains both the ordinary sampling covariance and the covariance contributed by the added privacy noise. This bound affects two parts of the procedure: the choice of projection resolution and the upper confidence bound used to form the radius.

To assess the role of the privacy contribution, we conduct an ablation study using exactly the same privatised observations. The full procedure includes the privacy covariance in both the resolution selection and the confidence radius. The radius ablation selects the same resolution as the full procedure but omits the privacy covariance when constructing the radius. The full ablation omits the privacy covariance from both the resolution selection and the radius. 


\begin{figure*}[t]
\centering
\includegraphics[width=0.98\textwidth]{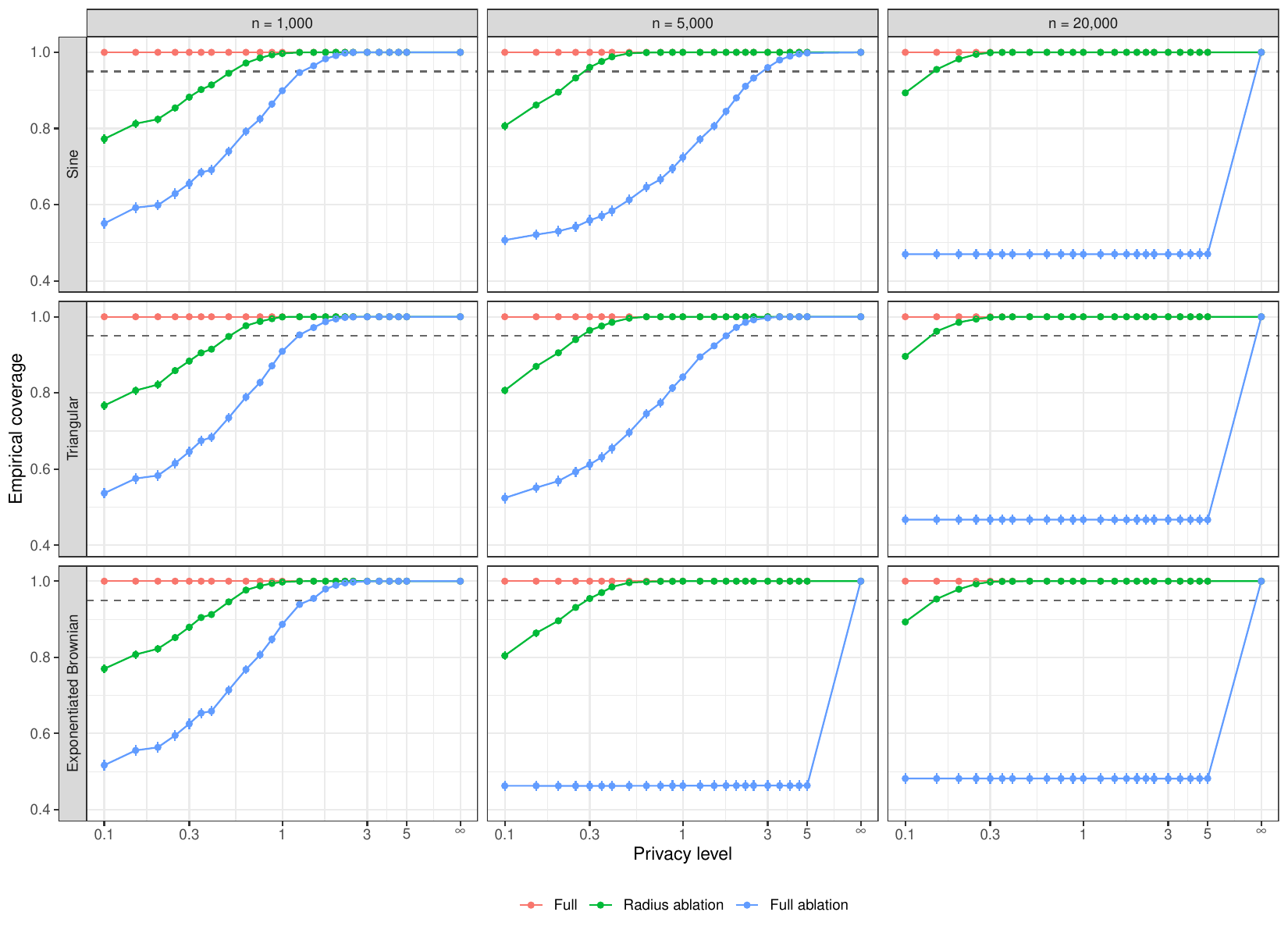}
\caption{\emph{Effect of accounting for privacy covariance on coverage.} Rows correspond to the sine, triangular and fixed exponentiated-Brownian densities, and columns to $n=1000$, $5000$ and $20000$. The full procedure accounts for privacy covariance in both resolution selection and the radius. The radius ablation omits it only from the radius, while the full ablation omits it from both. All procedures use the same privatised observations within each replication. Larger $\alpha$ corresponds to weaker privacy, and $\alpha=\infty$ denotes the common non-private endpoint. The dashed line marks the nominal coverage level $0.95$. Error bars are pointwise $95\%$ binomial confidence intervals based on $5000$ replications, nearly invisible due to their tightness.}
\label{fig:privacy-sweep}
\end{figure*}

Figure~\ref{fig:privacy-sweep} shows a clear distinction between the three settings. The full construction covers the generating density in every replication across the displayed configurations, being conservative and exceeding the nominal level $1-\gamma=0.95$. In contrast, coverage deteriorates when the privacy covariance is omitted, and the effect is stronger when it is completely removed from the construction. It falls as low as $0.767$ when the covariance is removed only from the radius and as low as $0.462$ when it is also removed from resolution selection. While the discrepancy diminishes with larger sample sizes and weaker privacy levels, it remains substantial even at the largest sample size and weakest finite privacy level considered: at $n=20000$ and $\alpha=5$, coverage of the full ablation ranges from $0.467$ to $0.482$ across the three densities. This behaviour is irrespective of the nominal regularity of the density. 

On the matter of conservatism of the full construction, this is expected from the way the finite-sample guarantee is obtained: the radius combines uniform covariance and approximation bounds with a one-sided Cantelli inequality, all chosen to guarantee coverage over the entire honesty class rather than to be sharp at any one fixed density. It results in conservative constant factors in the construction inflating the radius.



\paragraph{Privacy and the honesty class.}
Having established that privacy-induced uncertainty must be accounted for to maintain coverage, we next examine its effect on the size of the resulting confidence sets. Theorem~\ref{thm:projected-loss} predicts that the diameter depends not only on the privacy level and sample size, but also on the roughest class over which honesty is required. We therefore hold the sine generating density fixed and vary the regularity of the honesty class over $r\in\{0.2,0.3,0.4,0.5\}$. Since the Besov classes are nested, smaller $r$ corresponds to requiring coverage over a larger and less regular class. For each value of $r$, we recompute the deterministic projection resolution level $J^*$ from the corresponding design parameters and then recompute the confidence radius using the same observations and coupled privacy noise. We report $D=2R_n$ (as in \eqref{eq: Rn2}), the diameter of the enclosing ball. The case $r=0.5$ coincides with the baseline construction used in the previous paragraph.

\begin{figure*}[t]
\centering
\includegraphics[width=0.98\textwidth]{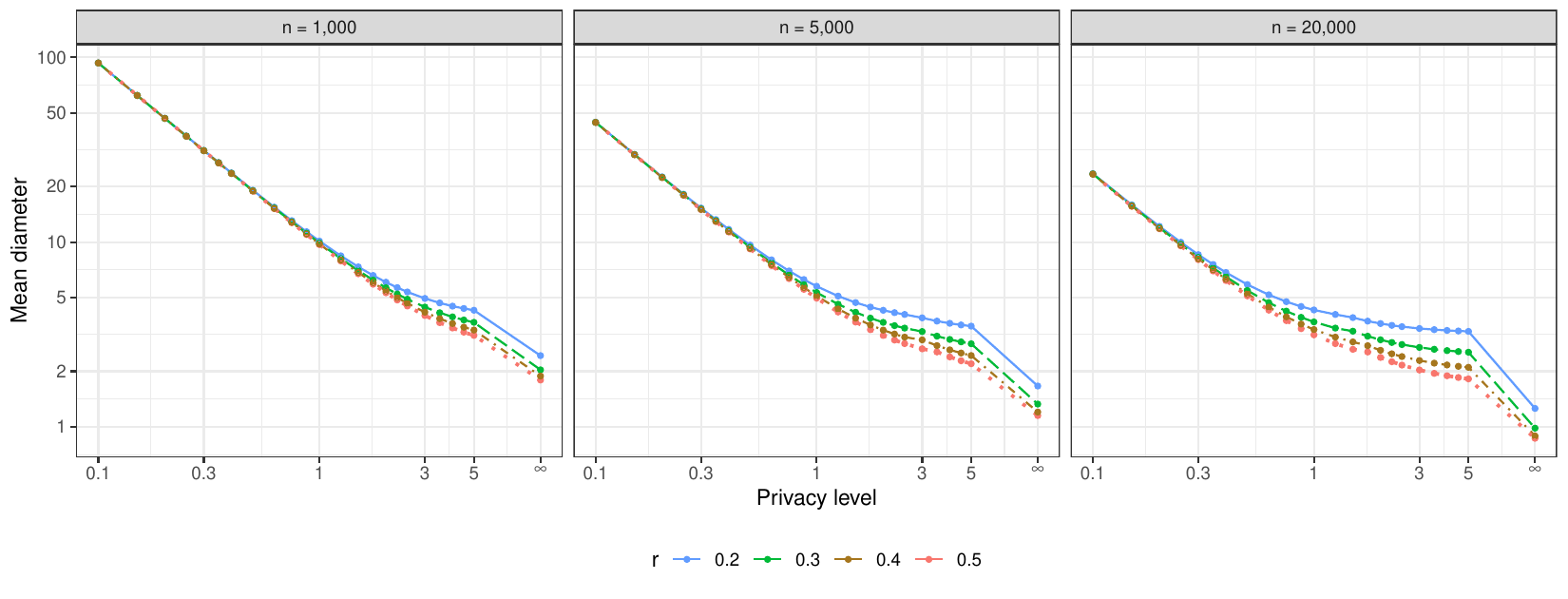}
\caption{\emph{Privacy and the cost of honesty for the sine density.} Panels correspond to $n=1000$, $5000$ and $20000$, and curves vary the regularity $r$ of the class over which coverage is required. Smaller $r$ corresponds to a larger honesty class. The generating sine density is held fixed throughout, so changes in diameter arise from changing the coverage requirement rather than the data-generating distribution. Points show the mean enclosing-ball diameter $D=2R_n$ over $5000$ replications, with error bars equal to $\pm1.96$ Monte Carlo standard errors (albeit, nearly invisible). Larger $\alpha$ corresponds to weaker privacy, and $\alpha=\infty$ denotes the non-private construction.}
\label{fig:honesty-cost}
\end{figure*}

Figure~\ref{fig:honesty-cost} clearly distinguishes between the two contributions to the size of the confidence set. First, the mean diameter decreases with increasing sample size and with weaker privacy, as the uncertainty in the private projected-loss estimate decreases. Second, even for the same generating density and observations, requiring honesty over a larger class increases the diameter. At $n=20000$ and $\alpha=5$, for example, decreasing $r$ from $0.5$ to $0.2$ increases the mean diameter from $1.819$ to $3.286$, an increase of $81\%$. This difference is much smaller under strong privacy: at $\alpha=0.1$, the corresponding increase is less than $1\%$. In this regime, privacy noise dominates the radius, leaving comparatively little visible effect from changing the honesty class. As privacy weakens, the dependence on the class over which coverage is required becomes increasingly apparent.

In Appendix \ref{app:numerical-implementation}, we additionally provide Table~\ref{tab:diameter-comparison}, which shows that the same behaviour is present for all three generating densities (sine, triangular, exponential-Brownian). Under strong privacy, their mean diameters are very similar because the privacy contribution dominates the projected loss in all cases. These differences become more visible as privacy weakens. At $n=20000$, $\alpha=1$ and $r=0.5$, for example, the mean diameters are $3.149$, $3.132$ and $3.095$ for the sine, triangular and exponentiated-Brownian densities, respectively, compared with non-private means of $0.844$, $0.800$ and $0.757$.

These results complement the coverage experiment above. Accounting for privacy-induced uncertainty is necessary to retain coverage, but the cost for doing so is a size-inflation of the confidence set. The size also depends on how large a class the procedure must cover honestly, even when the true density itself is unchanged. 


\section{Discussion}
\label{sec:discussion}

We have shown that, under bounded non-interactive local privacy, honest $L^2$-confidence sets over Besov classes can adapt to a range of regularities if and only if its largest element is at most $4/3$ times its smallest, against a factor $2$ without privacy. The boundary moves because local privacy degrades smoothness testing more than estimation, and honest confidence sets inherit the testing rate. We point out that our contribution is theoretical: the construction of Section~\ref{sec:construction} serves to show that the boundary is attained, and we do not claim that it is a practical procedure.

Adaptation under local privacy was known to come at a quantitative price, with adaptive estimators attaining the private minimax rates only up to logarithmic factors \citep{Butucea20, cai2025cost}, and our confidence sets carry a similar factor $\ell_n$, whose necessity we leave open. Our results show that the price is also qualitative: when $4r/3<s\leq2r$, adaptation is possible without privacy but impossible under local privacy, whatever the logarithmic factor allowed.

Our analysis is restricted to non-interactive channels, and extending it to sequentially interactive ones, in which each individual may use the reports of previous individuals, is a natural next step. Interaction is known to improve private goodness-of-fit testing \citep{BerrettButucea20} and the private estimation of quadratic functionals \citep{ButuceaQuadraticDP23}, on which smoothness testing relies, whereas it does not change the minimax rates of density estimation \citep{DJW18}. It is therefore conceivable that interaction enlarges the range over which honest adaptation is possible, a question that we leave for future work.


\bibliography{adapt_ci}

\newpage
\appendix

\section*{Notation}

For a density $f$ on $[0,1]$, $P_f$, $\E_f$ and $\Var_f$ refer to an observation $X\sim P_f$ before privatisation. When $m$ independent observations with density $f$ are passed through the channels $Q_i$, $P_f^{Q^m}$ and $\E_f^{Q^m}$ denote the joint law and the expectation of the reports $Z_1,\ldots,Z_m$, and $\E_f^{Q_i}$ the expectation of a function of the report $Z_i$ alone. We write $P_f^Q$, $\E_f^Q$, $\Var_f^Q$ and $\Cov_f^Q$ when the reports involved are clear from the context, as for the moments of a single report. For a probability measure $\nu$ on densities, $P_\nu^{Q^n}=\int P_f^{Q^n}\,d\nu(f)$. When a procedure uses independent sample splits, $P_2$, $\E_2$ and $\Var_2$ are conditional on the reports of the first split. For probability measures $\PP$ and $\PP'$, $\|\PP-\PP'\|_{\TV}=\sup_A|\PP(A)-\PP'(A)|$.

For functions, $\langle\cdot,\cdot\rangle$ and $\|\cdot\|_2$ are the inner product and norm of $L^2([0,1])$, and $\|\cdot\|_\infty$ is the supremum norm. For vectors, $\langle\cdot,\cdot\rangle$ is the Euclidean inner product and $\|\cdot\|_{\ell^p}$ the $\ell^p$ norm. For symmetric matrices, $A\preceq B$ means that $B-A$ is positive semidefinite, and $\|A\|_{\op}$ and $\|A\|_{\HS}=\{\operatorname{tr}(A^\top A)\}^{1/2}$ are the operator and Hilbert-Schmidt norms. We write $|A|$ for the cardinality of a finite set, $\1\{\cdot\}$ for indicators, $a\vee b=\max(a,b)$, $a\wedge b=\min(a,b)$ and $x^+=x\vee0$, and $\log$ is the natural logarithm. Throughout the appendices, $C$ denotes a positive constant that may change from line to line and depends only on the regularities involved, on $R$, $U$, $S$, $\nu$, $\bar\alpha$ and the wavelet basis, but never on $n$, $J$ or $\alpha$. $a\lesssim b$ means $a\leq Cb$, $a\asymp b$ means $a\lesssim b\lesssim a$, and a subscript, as in $\lesssim_\gamma$, allows $C$ to depend on the indicated quantity as well. Finally, $X_n=O_{P_f^{Q^n}}(r_n)$ uniformly over a class $\mathcal F$ means that $\lim_{M\to\infty}\limsup_{n\to\infty}\sup_{f\in\mathcal F}P_f^{Q^n}(|X_n|>Mr_n)=0$.

\section{Privacy mechanism and adaptive estimation}
\label{app:wavelets-privacy-estimation}

In this appendix, we construct the private adaptive estimator used as the centre of the confidence sets in Sections \ref{sec:construction} and \ref{sec:separated-adaptation}. To do this, we first recall the wavelet definition of the Besov classes introduced in Section \ref{sec:model} and describe the approximation bounds used throughout the paper. We then adapt the privacy mechanism from \citet{Butucea20}, defining locally private pointwise evaluation of wavelets, which we use to obtain our adaptive estimator. Focusing on the $B^t_{2,\infty}$ class of densities and the squared $L^2$-loss, we derive an explicit finite-sample risk bound that will be used in the later sections.


\subsection{Wavelet bases and Besov spaces}\label{app: bes_wav}
 
This section introduces the wavelet basis and the Besov norms used throughout the paper, and isolates the four properties of the basis on which all subsequent arguments rely.

For a fixed an integer $S_0>S$, let $\varphi$ and $\psi$ be the scaling function and the wavelet of a Daubechies multiresolution analysis of $L^2(\mathbb{R})$ with sufficiently many vanishing moments, such that $\varphi$ and $\psi$ are compactly supported and belong to $C^{S_0}(\mathbb{R})$, that $\int_{\mathbb{R}}\varphi=1$, and that $\psi$ is orthogonal to every polynomial of degree less than $S_0$. For every integer $j_0$, the functions $2^{j_0/2}\varphi(2^{j_0}\cdot-k)$, $k\in\mathbb{Z}$, and $2^{j/2}\psi(2^j\cdot-k)$, $j\geq j_0$, $k\in\mathbb{Z}$, then form an orthonormal basis of $L^2(\mathbb{R})$ (see \citet{daubechies1992ten}, \citet{meyer1992wavelets} and \citet[Chapter~4]{gineMathematicalFoundationsInfiniteDimensional2016}). Compact support matters below, because it bounds the number of basis functions that are non-zero at any given point, excluding for instance the Meyer basis.

Restricting our attention to the unit interval $[0,1]$, we will instead use the boundary-corrected basis constructed from $\varphi$ and $\psi$ by \citet{cohenWaveletsIntervalFast1993} (see also \citet[Section~4.3.5]{gineMathematicalFoundationsInfiniteDimensional2016}). For $j_0$ large enough for this construction to apply, the basis then consists of scaling functions $\varphi_{j_0k}$, $k\in N_{-1}$, at level $j_0$, and of wavelets $\psi_{jk}$, $k\in N_j$, at the levels $j\geq j_0$, where $N_{-1}$ and $N_j$ are finite index sets. There is a constant $C_\psi>0$, depending only on $\Phi$, $\Psi$ and $j_0$, such that the basis has the following properties.
\begin{enumerate}
\item[(W1)] The family $\{\varphi_{j_0k}:k\in N_{-1}\}\cup\{\psi_{jk}:j\geq j_0,\ k\in N_j\}$ is an orthonormal basis of $L^2([0,1])$.
\item[(W2)] $|N_{-1}|=2^{j_0}$, and $|N_j|=2^j$ for every $j\geq j_0$.
\item[(W3)]$\bigl\|\sum_{k\in N_{-1}}|\varphi_{j_0k}|\bigr\|_\infty\leq C_\psi2^{j_0/2}$ and, for every $j\geq j_0$, $\bigl\|\sum_{k\in N_j}|\psi_{jk}|\bigr\|_\infty\leq C_\psi2^{j/2}$.
\item[(W4)]  The constant function belongs to the span of $\{\varphi_{j_0k}:k\in N_{-1}\}$, or equivalently, $\int_0^1\psi_{jk}(x)\,dx=0$ for all $j\geq j_0$ and $k\in N_j$.
\end{enumerate}

Property (W3) is what controls the sensitivity of the privacy mechanism in Lemma~\ref{lem:release-privacy} and the positivity of the alternatives in Lemma~\ref{lem:probabilistic-positivity}, whereas (W4) yields the covariance bound of Lemma~\ref{lem:release-covariance} and ensures that the alternatives of Appendix~\ref{app:testing-lower-impossibility} integrate to one. We note that the periodised Daubechies basis of $L^2([0,1])$ \citep[Section~4.3.4]{gineMathematicalFoundationsInfiniteDimensional2016} has properties (W1) to (W4) as well, with $j_0=0$ and a single scaling function equal to one, and the implementation of Appendix~\ref{app:numerical-implementation} is based on a discretisation of it.
 
\paragraph{Besov spaces.}
For $f\in L^2([0,1])$, write $\alpha_{j_0k}(f)=\langle f,\varphi_{j_0k}\rangle$ for $k\in N_{-1}$, $\beta_{jk}(f)=\langle f,\psi_{jk}\rangle$ for $j\geq j_0$ and $k\in N_j$, and $\beta_j(f)=(\beta_{jk}(f))_{k\in N_j}$. For $0<t<S_0$, the norm used in Section~\ref{sec:model} is
\begin{align*}
\|f\|_{B^t_{2,\infty}}
\coloneqq\max\Bigl\{&\Bigl(\sum_{k\in N_{-1}}\alpha_{j_0k}(f)^2\Bigr)^{1/2},\\
&\sup_{j\geq j_0}2^{jt}\|\beta_j(f)\|_{\ell^2}\Bigr\},
\end{align*}
and we let $B^t_{2,\infty}([0,1])$ denote the space of those $f$ for which this quantity is finite. For $0<t<S_0$, it is equivalent to the usual norm of the Besov space $B^t_{2,\infty}([0,1])$, and another sufficiently regular boundary-corrected basis would produce an equivalent norm \citep[Section~4.3.5]{gineMathematicalFoundationsInfiniteDimensional2016}. Since, by (W1) and (W4), the constant function is a unit vector in the span of the scaling functions, every density $f$ satisfies $(\sum_{k\in N_{-1}}\alpha_{j_0k}(f)^2)^{1/2}\geq|\langle f,1\rangle|=1$ by Cauchy-Schwarz inequality, with equality for the uniform density. It is for this reason that Section~\ref{sec:model} requires $R\geq1$, and why the uniform density belongs to every class $\Sigma(t,R,U)$.

For $j\geq j_0$, let $E_j(f)=\|\beta_j(f)\|_{\ell^2}^2$ denote the energy of $f$ at level $j$. For $J>j_0$, let $K_J$ be the orthogonal projection onto the span of the scaling functions and of the wavelets at levels $j_0,\ldots,J-1$. We index these functions jointly by
$$
\begin{aligned}
\calI_J
&=\{(-1,k):k\in N_{-1}\}\\
&\quad\cup\{(j,k):j_0\leq j\leq J-1,\ k\in N_j\},
\end{aligned}
$$
writing $\psi_a$ for the scaling function or wavelet with index $a\in\calI_J$ and $j(a)$ for its level, with the convention $j(a)=-1$ for the scaling functions, to which the notation $N_{-1}$ refers. Then
$$K_Jf=\sum_{a\in\calI_J}\langle f,\psi_a\rangle\psi_a,$$
and, by (W2), $|\calI_J|=2^{j_0}+\sum_{j=j_0}^{J-1}2^j=2^J$. This indexing is used for the private release below and for the projected-loss statistic of Appendix~\ref{app:projected-loss-confidence}. The next two lemmas record the consequences of (W1) to (W3) that are used repeatedly.
 
\begin{lem}
\label{lem:wavelet-basic-bounds}
For every $j\geq j_0$, one has $\|\psi_{jk}\|_\infty\leq C_\psi2^{j/2}$ for all $k\in N_j$, and, for every coefficient vector $b_j=(b_{jk})_{k\in N_j}$,
$$\Bigl\|\sum_{k\in N_j}b_{jk}\psi_{jk}\Bigr\|_\infty\leq C_\psi2^{j/2}\max_{k\in N_j}|b_{jk}|\leq C_\psi2^{j/2}\|b_j\|_{\ell^2}.$$
Similarly, $\|\varphi_{j_0k}\|_\infty\leq C_\psi2^{j_0/2}$ for all $k\in N_{-1}$.
\end{lem}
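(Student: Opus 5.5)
The two bounds on the wavelets follow from property (W3), and the bound on the linear combination then follows from the triangle inequality. Throughout we assume each $\psi_{jk}$, and each $\varphi_{j_0k}$, is continuous. In the boundary-corrected construction of \citet{cohenWaveletsIntervalFast1993}, built from $C^{S_0}$ generators, this holds, so that supremum norms can be read pointwise.

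First fix $j\geq j_0$, $k\in N_j$ and $x\in[0,1]$. Since every term in the sum is nonnegative, $|\psi_{jk}(x)|\leq\sum_{k'\in N_j}|\psi_{jk'}(x)|$. Taking the supremum over $x$ and applying (W3) gives $\|\psi_{jk}\|_\infty\leq C_\psi2^{j/2}$. The same argument, using the first bound of (W3), gives $\|\varphi_{j_0k}\|_\infty\leq C_\psi2^{j_0/2}$ for $k\in N_{-1}$.

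For the linear combination, fix $x$ and use the triangle inequality:
\[
\Bigl|\sum_{k\in N_j}b_{jk}\psi_{jk}(x)\Bigr|\leq\max_{k\in N_j}|b_{jk}|\sum_{k\in N_j}|\psi_{jk}(x)|\leq C_\psi2^{j/2}\max_{k\in N_j}|b_{jk}|,
\]
where the last step is again (W3). Taking the supremum over $x$ gives the first inequality. The second inequality is the elementary bound $\max_k|b_{jk}|\leq\|b_j\|_{\ell^2}$, valid because $N_j$ is finite by (W2).

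There is no real obstacle here. The one point that needs care is that (W3) controls the sum of the moduli, $\sum_k|\psi_{jk}|$, and not $\sum_k\psi_{jk}$. Factoring out the maximum coefficient therefore has to be done after applying the triangle inequality, as above.
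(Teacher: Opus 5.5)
Your proof is correct and is the same argument as the paper's. The individual sup-norm bounds come from dominating one nonnegative term by the sum controlled in (W3). The bound on the linear combination comes from the triangle inequality, then factoring out $\max_k|b_{jk}|$, then $\max_k|b_{jk}|\leq\|b_j\|_{\ell^2}$. Your continuity assumption is harmless but not needed: if $\|\cdot\|_\infty$ is read as an essential supremum, the same pointwise argument holds almost everywhere.
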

 
\begin{proof}
For $x\in[0,1]$, (W3) gives
$$
\begin{aligned}
\Bigl|\sum_{k\in N_j}b_{jk}\psi_{jk}(x)\Bigr|
&\leq\max_{k\in N_j}|b_{jk}|\sum_{k\in N_j}|\psi_{jk}(x)|\\
&\leq C_\psi2^{j/2}\max_{k\in N_j}|b_{jk}|,
\end{aligned}
$$
and $\max_{k\in N_j}|b_{jk}|\leq\|b_j\|_{\ell^2}$. The bounds on $\|\psi_{jk}\|_\infty$ and $\|\varphi_{j_0k}\|_\infty$ follow from (W3), since each term of a sum of non-negative functions is bounded by the sum.
\end{proof}
 
\begin{lem}
\label{lem:besov-tail}
Let $0<t<S$ and $\|f\|_{B^t_{2,\infty}}\leq R$. Then $E_j(f)\leq R^22^{-2jt}$ for every $j\geq j_0$ and, for every $J>j_0$,
\[\|(I-K_J)f\|_2^2\leq b_t(J)\coloneqq\frac{R^22^{-2Jt}}{1-2^{-2t}}.\]
\end{lem}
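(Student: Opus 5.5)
The plan is to read both bounds directly off the definition of the Besov norm, then sum a geometric series using Parseval's identity from (W1).

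First, the energy bound. Since $\|f\|_{B^t_{2,\infty}}\leq R$, the supremum term in the definition gives $2^{jt}\|\beta_j(f)\|_{\ell^2}\leq R$ for every $j\geq j_0$. Squaring yields $E_j(f)=\|\beta_j(f)\|_{\ell^2}^2\leq R^2 2^{-2jt}$.

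Second, the tail bound. By (W1), $f$ expands in the orthonormal basis. By construction, $K_J$ keeps exactly the scaling coefficients and the wavelet coefficients at levels $j_0,\ldots,J-1$. Hence $(I-K_J)f=\sum_{j\geq J}\sum_{k\in N_j}\beta_{jk}(f)\psi_{jk}$, with convergence in $L^2$, and Parseval gives
\[
\|(I-K_J)f\|_2^2=\sum_{j\geq J}E_j(f).
\]
Since $J>j_0$, every level in this sum satisfies $j\geq j_0$, so the first step applies to each term. This gives
\[
\|(I-K_J)f\|_2^2\leq R^2\sum_{j\geq J}2^{-2jt}=\frac{R^2 2^{-2Jt}}{1-2^{-2t}},
\]
because $t>0$ makes the geometric series converge. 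The right-hand side is $b_t(J)$.

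There is no real obstacle here. The only point to check is that $K_J$ is the orthogonal projection onto the span of the retained basis elements, so the residual is exactly the span of the higher-level wavelets. This holds by (W1) and the definition of $K_J$.
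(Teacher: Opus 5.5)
Your proposal is correct and follows the paper's proof essentially line by line. Both read the energy bound off the supremum in the Besov norm, then bound the tail by Parseval's identity from (W1) over the levels $j\geq J$ and sum the geometric series $R^2\sum_{j\geq J}2^{-2jt}$.
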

 
\begin{proof}
The first bound restates $2^{jt}\|\beta_j(f)\|_{\ell^2}\leq R$. Since $(I-K_J)f$ consists of the levels $j\geq J$, (W1) and Parseval's identity give
$$
\begin{aligned}
\|(I-K_J)f\|_2^2&=\sum_{j=J}^\infty E_j(f)\leq R^2\sum_{j=J}^\infty2^{-2jt}\\
&=\frac{R^22^{-2Jt}}{1-2^{-2t}}.
\end{aligned}
$$
\end{proof}

\subsection{Level-dependent private wavelet release}\label{sec: privacy release}

We now define the private release used both for the adaptive estimator and for the projected-loss statistic. It is adapted from the privacy mechanism of \citet{Butucea20}, applied to our boundary-corrected basis: we release point-evaluation of wavelets, with those at level $j$ being perturbed by independent Laplace noise with scale proportional to $(j\vee1)^\nu2^{j/2}/\alpha$, where $\nu>1$. The factor $2^{j/2}$ accounts for the size of the wavelets at level $j$, while $(j\vee1)^\nu$ distributes the privacy budget across levels so that the total sensitivity remains bounded. \citet{Butucea20} derive the constant of proportionality from the bounded supports of the translates of their wavelets on the real line. In order to extend the analysis to the modified edge functions of the boundary-corrected basis, we take it equal to the exact weighted sensitivity $\Delta_J$ defined below, which Lemma~\ref{lem:release-privacy} shows to be bounded uniformly in $J$.

We first fix $J>j_0$, define
\[d_{-1}\coloneqq2^{j_0/2},\qquad d_{j}\coloneqq(j\vee1)^\nu2^{j/2},\qquad j\geq j_0,\]
and write $d_{j(a)}$ for the corresponding quantity when $a\in\calI_J$. Define the weighted sensitivity
\begin{equation}\label{eq: delta_j}\Delta_J\coloneqq\sup_{x,x'\in[0,1]}\sum_{a\in\calI_J}\frac{|\psi_a(x)-\psi_a(x')|}{d_{j(a)}},\end{equation}
and set
$$\sigma_{J,a}=\frac{\Delta_Jd_{j}}{\alpha}.$$
The $i$th unit corresponding to $X_i$ releases the privatized vector $Z_{i,J}=(Z_{i,J,a})_{a\in\calI_J}$ with
$$Z_{i,J,a}=\psi_a(X_i)+\sigma_{J,a}W_{i,a},\qquad a\in\calI_J,$$
where the $W_{i,a}$ are independent standard Laplace variables with density $w\mapsto e^{-|w|}/2$. The resolution $J$ is fixed before observing the data and the channel is applied similarly to all.

The next lemma verifies that this release satisfies the privacy requirement from Section \ref{sec:model}. We will also have use of the uniform bound on $\Delta_J$ as It shows that releasing coefficients at increasingly fine resolutions does not introduce any additional dependence on $J$, beyond the prescribed weights $d_{j(a)}$.

\begin{lem}
\label{lem:release-privacy}
For every deterministic $J>j_0$, the release $Z_{i,J}$ is non-interactive $\alpha$-LDP. Moreover,
$$\sup_{J>j_0}\Delta_J<\infty.$$
\end{lem}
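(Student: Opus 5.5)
The plan has two parts: privacy follows from the standard Laplace-mechanism argument, and the uniform bound on $\Delta_J$ from the localisation property (W3) together with summability of $(j\vee1)^{-\nu}$ for $\nu>1$.

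\emph{Privacy.} Fix $J$ and $x,x'\in[0,1]$. Because the noise coordinates are independent Laplace variables, the release $Z_{i,J}$ given $X_i=x$ has density
\[
z\mapsto\prod_{a\in\calI_J}\frac{1}{2\sigma_{J,a}}\exp\bigl(-|z_a-\psi_a(x)|/\sigma_{J,a}\bigr)
\]
with respect to Lebesgue measure on $\mathbb R^{|\calI_J|}$. By the reverse triangle inequality, the log-ratio of the densities at $x$ and $x'$ is at most
\[
\sum_{a\in\calI_J}\frac{|\psi_a(x)-\psi_a(x')|}{\sigma_{J,a}}=\frac{\alpha}{\Delta_J}\sum_{a\in\calI_J}\frac{|\psi_a(x)-\psi_a(x')|}{d_{j(a)}}\le\alpha,
\]
where the last step uses the definition \eqref{eq: delta_j} of $\Delta_J$. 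Integrating this pointwise density bound over any measurable $A$ gives $Q_i(A\mid x)\le e^\alpha Q_i(A\mid x')$. The channel is non-interactive since each $Z_{i,J}$ depends only on $X_i$ and its own noise. One must also check that $\Delta_J$ is finite and positive so that $\sigma_{J,a}$ is well defined. Finiteness follows from the second part below. Positivity holds because some $\psi_a$ with $a\in\calI_J$ is non-constant; otherwise (W1) would force a degenerate basis.

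\emph{Uniform bound.} Split the sum in $\Delta_J$ by level and bound $|\psi_a(x)-\psi_a(x')|\le|\psi_a(x)|+|\psi_a(x')|$. Then (W3) gives
\[
\Delta_J\le 2\,\frac{C_\psi2^{j_0/2}}{d_{-1}}+2\sum_{j=j_0}^{J-1}\frac{C_\psi2^{j/2}}{(j\vee1)^\nu2^{j/2}}
=2C_\psi\Bigl\{1+\sum_{j=j_0}^{J-1}(j\vee1)^{-\nu}\Bigr\}.
\]
If $j_0\le0$, the finitely many levels with $j\le0$ have $j\vee1=1$ and contribute a bounded number of terms. The remaining series $\sum_{j\ge1}j^{-\nu}$ converges since $\nu>1$. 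This gives a bound independent of $J$, so $\sup_{J>j_0}\Delta_J<\infty$.

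The only step needing care is the measure-theoretic one, namely passing from the density-ratio bound to the bound on sets. This is immediate because both conditional laws are absolutely continuous with respect to a common Lebesgue measure, with densities that are everywhere positive.
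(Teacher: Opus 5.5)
Your proposal is correct and follows essentially the same route as the paper. Privacy comes from the product Laplace density, the reverse triangle inequality and the definition of $\Delta_J$. The uniform bound comes from applying (W3) level by level at $x$ and $x'$, which gives $\Delta_J\le2C_\psi\{1+\sum_{j=j_0}^{J-1}(j\vee1)^{-\nu}\}$, bounded because $\nu>1$. Your remark that $\Delta_J>0$ is needed for $\sigma_{J,a}$ to be well defined does not appear in this proof in the paper, which only notes positivity later, in the proof of Lemma~\ref{lem:finite-resolution-rates}.
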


\begin{proof}
Fix $J>j_0$. Conditional on $X_i=x$, independence of the Laplace variables implies that $Z_{i,J}$ has density
$$q_J(z\mid x)=\prod_{a\in\calI_J}\frac{1}{2\sigma_{J,a}}\exp\left\{-\frac{|z_a-\psi_a(x)|}{\sigma_{J,a}}\right\}.$$
Hence, for $x,x'\in[0,1]$,
$$\log\frac{q_J(z\mid x)}{q_J(z\mid x')}=\sum_{a\in\calI_J}\frac{|z_a-\psi_a(x')|-|z_a-\psi_a(x)|}{\sigma_{J,a}}.$$
By the reverse triangle inequality,
$$|z_a-\psi_a(x')|-|z_a-\psi_a(x)|\leq|\psi_a(x)-\psi_a(x')|.$$
Using the definition of $\sigma_{J,a}$ therefore gives
$$\log\frac{q_J(z\mid x)}{q_J(z\mid x')}\leq\frac{\alpha}{\Delta_J}\sum_{a\in\calI_J}\frac{|\psi_a(x)-\psi_a(x')|}{d_{j(a)}}\leq\alpha.$$
Thus $q_J(z\mid x)\leq e^\alpha q_J(z\mid x')$ for every $z,x,x'$. Integrating this inequality over any measurable set of outputs proves the $\alpha$-LDP condition. Since the release of individual $i$ depends only on $X_i$ and independent randomness, the mechanism is non-interactive.

It remains to show that $\Delta_J$ is bounded uniformly in $J$. By property (W3) of Appendix~\ref{app: bes_wav}, for every $x\in[0,1]$,
$$\sum_{k\in N_{-1}}\frac{|\varphi_{j_0k}(x)|}{2^{j_0/2}}\leq C_\psi,$$
and, for every resolution level $j\geq j_0$,
$$\sum_{k\in N_j}\frac{|\psi_{jk}(x)|}{(j\vee1)^\nu2^{j/2}}\leq C_\psi(j\vee1)^{-\nu}.$$
Applying these bounds at both $x$ and $x'$ yields
$$\Delta_J\leq2C_\psi\left\{1+\sum_{j=j_0}^{J-1}(j\vee1)^{-\nu}\right\}.$$
Since $\nu>1$, the series $\sum_{j\geq j_0}(j\vee1)^{-\nu}$ converges. Its partial sums are therefore uniformly bounded in $J$, which proves $\sup_{J>j_0}\Delta_J<\infty$.
\end{proof}

The adaptive estimator in Appendix \ref{app: adaptive_estimator} averages these released coefficients over the observations. We then characterize their expectations, variances, and the order of the added privacy noise. The releases are unbiased statistics since it contains a zero-mean Laplace noise. As well, the variance can be separate into the sampling contribution and the independent Laplace contribution. The uniform sensitivity bound from Lemma \ref{lem:release-privacy} then controls the latter.

\begin{lem}
\label{lem:release-moments}
Let $f$ be a density satisfying $\|f\|_\infty\leq U$ for $U>0$. Then, for every $J>j_0$ and $a\in\calI_J$,
\[\E_f^Q Z_{i,J,a}=\langle f,\psi_a\rangle,\qquad \Var_f^Q(Z_{i,J,a})\leq U+2\sigma_{J,a}^2.\]
Moreover, uniformly over $J>j_0$ and $a\in\calI_J$,
$$\sigma_{J,a}^2\leq C\alpha^{-2}d_{j(a)}^2.$$
\end{lem}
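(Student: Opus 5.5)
The plan is to treat the mean, the variance and the noise scale in turn, conditioning on $X_i$ and using that the Laplace noise is independent of $X_i$ and has mean zero. Write $Z_{i,J,a}=\psi_a(X_i)+\sigma_{J,a}W_{i,a}$, and recall that $\sigma_{J,a}$ is deterministic once $J$ is fixed.

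\textbf{Mean.} Since $\E W_{i,a}=0$,
\[
\E_f^QZ_{i,J,a}=\E_f\psi_a(X_i)=\int_0^1\psi_a f=\langle f,\psi_a\rangle .
\]
Integrability is automatic: by Lemma~\ref{lem:wavelet-basic-bounds} every $\psi_a$ is bounded, and $f$ is a density.

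\textbf{Variance.} By independence of $X_i$ and $W_{i,a}$, the variance splits as
\[
\Var_f^Q(Z_{i,J,a})=\Var_f\{\psi_a(X_i)\}+\sigma_{J,a}^2\Var(W_{i,a}).
\]
A standard Laplace variable with density $e^{-|w|}/2$ has variance $\int w^2e^{-|w|}\,dw/2=\Gamma(3)/2\cdot 2/2=2$, so the second term is $2\sigma_{J,a}^2$. For the first term, use
\[
\Var_f\{\psi_a(X)\}\le\E_f\psi_a(X)^2=\int\psi_a^2f\le\|f\|_\infty\|\psi_a\|_2^2\le U,
\]
since the basis is orthonormal by (W1). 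Together these give $\Var_f^Q(Z_{i,J,a})\le U+2\sigma_{J,a}^2$.

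\textbf{Noise scale.} By definition $\sigma_{J,a}^2=\Delta_J^2d_{j(a)}^2/\alpha^2$, where $d_{j(a)}$ is read as $d_{-1}$ for the scaling functions. Lemma~\ref{lem:release-privacy} gives $\sup_{J>j_0}\Delta_J<\infty$, so we may take $C=(\sup_J\Delta_J)^2$. This constant depends only on $C_\psi$, $\nu$ and $j_0$, consistent with the paper's conventions on constants.

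The only point needing care is a notational one: the definition of $\sigma_{J,a}$ writes $d_j$, and this must be interpreted as $d_{j(a)}$, including for the scaling indices. There is no real obstacle.
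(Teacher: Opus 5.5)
Your proof is correct and follows the paper's argument step by step. Independence and centring of the Laplace noise give the mean and split the variance, the sampling part is bounded by $\int\psi_a^2f\le U\|\psi_a\|_2^2=U$ via (W1), and the noise scale is controlled by $\sup_{J>j_0}\Delta_J<\infty$ from Lemma~\ref{lem:release-privacy}. One small slip: your intermediate expression for the Laplace variance is garbled and should read $\frac12\int w^2e^{-|w|}\,dw=\int_0^\infty w^2e^{-w}\,dw=\Gamma(3)=2$, though your final value $2$ is right.
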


\begin{proof}
Since $W_{i,a}$ is independent of $X_i$, centred and of variance two, $\E_f^QZ_{i,J,a}=\E_f\{\psi_a(X_i)\}=\langle f,\psi_a\rangle$ and
$$
\begin{aligned}
\Var_f^Q(Z_{i,J,a})&=\Var_f\{\psi_a(X_i)\}+2\sigma_{J,a}^2\\
&\leq\int_0^1\psi_a(x)^2f(x)\,dx+2\sigma_{J,a}^2\\
&\leq U+2\sigma_{J,a}^2,
\end{aligned}
$$
where the last step uses $f\leq U$ and $\|\psi_a\|_2=1$, by (W1). Finally, $\sigma_{J,a}^2=\alpha^{-2}\Delta_J^2d_{j(a)}^2\leq C\alpha^{-2}d_{j(a)}^2$ uniformly over $J$ and $a$, since $\sup_{J>j_0}\Delta_J<\infty$ by Lemma~\ref{lem:release-privacy}.
\end{proof}

\subsection{Adaptive private estimator}\label{app: adaptive_estimator}

We now construct the private adaptive estimator used in Sections \ref{sec:construction} and \ref{sec:separated-adaptation}, based on wavelet thresholding as in \citet{Butucea20}. Their result applies to general Besov classes $B^s_{p,q}$ under $L^r$-loss, but we focus here on the $L^2$-loss over $B^t_{2,\infty}$. This allows a simpler argument based on simultaneous concentration of the released coefficients, which will be crucial for the construction of the confidence set in Appendix \ref{app:projected-loss-confidence}.

The estimator is computed from the private reports of a single split of the sample, of size $m_1$ in Section~\ref{sec:construction} and of sizes $m_1$ and $m_3$ in Section~\ref{sec:separated-adaptation}. We therefore describe it for a generic split of size $m\geq1$. Its construction does not involve the unknown smoothness $t$.

By Lemma~\ref{lem:release-moments}, a coefficient released at level $j$ has variance at most of order $d_{j}^2/\alpha^2$, so that its average over the split has variance at most of order $d_{j(a)}^2/(m\alpha^2)$ and the effective sample size is therefore
\[N_m=m\alpha^2.\]
It determines the finest resolution from which signal can be recovered: we gather from observations in the split their coefficients through level $J=K_m-1$, where
\[K_m\coloneqq\max\Bigl\{j_0+1,\Bigl\lceil\tfrac12\log_2(e+N_m)\Bigr\rceil\Bigr\},\]
so that $2^{K_m}\asymp N_m^{1/2}$ for large $N_m$, and the levels left out contribute at most $b_t(K_m)\lesssim N_m^{-t}$ to the squared error (Lemma~\ref{lem:besov-tail}), which is of smaller order than the estimation rate for every $t>0$. For $a\in\calI_{K_m}$, write $\theta_a=\langle f,\psi_a\rangle$. By Lemma~\ref{lem:release-moments},
$$\widehat\theta_a=\frac1m\sum_{i=1}^mZ_{i,K_m,a}$$
is an unbiased estimator of $\theta_a$.

Write $\sigma_a=\sigma_{K_m,a}=\Delta_{K_m}d_{j(a)}/\alpha$ for the noise scale of coefficient $a$,  $B_a=\|\psi_a\|_\infty$ and $c_a=\max\{2B_a/3,\sigma_a\}$, and assuming $\|f_0\|_\infty\leq U$ for some known $U>0$, set the threshold $\tau_a$ as
$$\varepsilon_m=(e+N_m)^{-2},\qquad x_m=\log(2^{K_m+1}/\varepsilon_m),$$
$$\tau_a=2\left\{\sqrt{\frac{2(U+2\sigma_a^2)x_m}{m}}+\frac{c_ax_m}{m}\right\}.$$
The proof of Proposition~\ref{prop:adaptive-private-estimator} below shows that $|\widehat\theta_a-\theta_a|\leq\tau_a/2$ for all $2^{K_m}$ released coefficients index by $\calI_{K_m}$ simultaneously, with probability at least $1-\varepsilon_m$.

Given the thresholded coefficients $\widetilde\theta_a=\widehat\theta_a\1\{|\widehat\theta_a|>\tau_a\}$, we define
\[\widehat f_m\coloneqq\sum_{\substack{a\in\calI_{K_m}\\j(a)=-1}}\widehat\theta_a\psi_a+\sum_{\substack{a\in\calI_{K_m}\\j(a)\geq j_0}}\widetilde\theta_a\psi_a,\]
so that our estimator $\widetilde f_m$ is the $L^2$ projection of $\widehat f_m$ onto the ball $\{h:\|h\|_2\leq\sqrt U\}$. This ball contains $\Sigma(t,R,U)$, because $\|f\|_2^2\leq\|f\|_\infty\int_0^1f\leq U$, so the projection can only reduce the approximation error, and it bounds the squared error by $4U$ on the event where the concentration above fails.

For $0<t<S$, let $a_m(t)\coloneqq \min\{4U,A_m(t)\}$, where
$$
\begin{aligned}
A_m(t)&=\sum_{\substack{a\in\calI_{K_m}\\j(a)=-1}}\frac{U+2\sigma_a^2}{m}+b_t(K_m)+4U\varepsilon_m\\
&\quad+\frac94\sum_{j=j_0}^{K_m-1}\min\Bigl\{R^22^{-2jt},\sum_{a:j(a)=j}\tau_a^2\Bigr\}.
\end{aligned}
$$
The four terms account for the coefficients at the smallest resolution level, the levels that are discarded, the event on which the concentration fails, and the thresholded coefficients. Finally, $\ell_m=\{\log(e+m\alpha^2)\}^{\nu+1/2}$ and $\delta_{m,\est}(t,\alpha)=m^{-t/(2t+1)}\vee(m\alpha^2)^{-t/(2t+2)}$ are the quantities $\ell_n$ and $\delta_{n,\est}(t,\alpha)$ of Section~\ref{sec:model} with $n$ replaced by $m$.

\begin{prop}
\label{prop:adaptive-private-estimator}
For every $m\geq1$ and $0<t<S$,
$$\sup_{f\in\Sigma(t,R,U)}\E_f^{Q^m}\|\widetilde f_m-f\|_2^2\leq a_m(t).$$
Under bounded privacy $\alpha\leq \bar \alpha$,
$$a_m(t)\lesssim\ell_m^2\delta_{m,\est}(t,\alpha)^2.$$
\end{prop}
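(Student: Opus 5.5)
The plan is to split the risk on a high-probability concentration event $\mathcal E$ and its complement. First I show that
\[
|\widehat\theta_a-\theta_a|\le\tau_a/2\quad\text{for all } a\in\calI_{K_m} \text{ simultaneously}
\]
with probability at least $1-\varepsilon_m$. For fixed $a$, $\widehat\theta_a-\theta_a$ is an average of $m$ independent centred terms $\psi_a(X_i)-\theta_a+\sigma_aW_{i,a}$. The bounded part has range controlled by $2B_a$. The Laplace part is sub-exponential with scale $\sigma_a$, and its moments satisfy $\E|\sigma_aW|^k\le k!\,\sigma_a^k$. Hence Bernstein's moment condition holds with variance proxy $U+2\sigma_a^2$ (Lemma~\ref{lem:release-moments}) and scale $c_a=\max\{2B_a/3,\sigma_a\}$. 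Bernstein's inequality then gives a two-sided deviation $\sqrt{2(U+2\sigma_a^2)x/m}+c_ax/m$ with probability at least $1-2e^{-x}$. A union bound over the $2^{K_m}$ indices with $x=x_m$ yields total failure probability at most $2^{K_m+1}e^{-x_m}=\varepsilon_m$.

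On $\mathcal E$, I bound $\|\widehat f_m-f\|_2^2$ by Parseval, coefficient by coefficient, and use that the projection onto the $L^2$-ball of radius $\sqrt U$, which contains $f$, is non-expansive towards $f$. There are three contributions.
\begin{itemize}
\item \emph{Scaling coefficients.} Here I bound the expectation directly, not on $\mathcal E$, by the variance $(U+2\sigma_a^2)/m$.
\item \emph{Omitted levels $j\ge K_m$.} These contribute $b_t(K_m)$ by Lemma~\ref{lem:besov-tail}.
\item \emph{Thresholded coefficients.} This is the standard oracle inequality. If $|\widehat\theta_a|\le\tau_a$, the error is $|\theta_a|$, and $|\theta_a|\le 3\tau_a/2$ on $\mathcal E$. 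If $|\widehat\theta_a|>\tau_a$, then $|\theta_a|>\tau_a/2$ and the error is at most $\tau_a/2$. In both cases the error squared is at most $\frac94\min\{\theta_a^2,\tau_a^2\}$. Summing over a level and using $\sum_k\min\{\theta_{jk}^2,\tau^2\}\le\min\{E_j(f),\sum_k\tau^2\}$, since the $\tau_a$ are constant within a level, together with $E_j(f)\le R^22^{-2jt}$, gives the last term of $A_m(t)$.
\end{itemize}
On $\mathcal E^c$, the projected estimator and $f$ both lie in the ball of radius $\sqrt U$, so the squared error is at most $4U$. This contributes $4U\varepsilon_m$, and it also gives the global cap, so the risk is at most $\min\{4U,A_m(t)\}$. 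The only care needed is to split the scaling term in expectation rather than on $\mathcal E$, which is routine.

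For the rate, with $\alpha\le\bar\alpha$ and $N_m=m\alpha^2$, I use $\sigma_a^2\asymp\alpha^{-2}(j\vee1)^{2\nu}2^j$. I also use $x_m\asymp\log(e+N_m)$, $B_a\lesssim 2^{j/2}\lesssim\sigma_a\alpha$, and $j\le K_m\lesssim\log(e+N_m)$. The variance term of $\tau_a^2$ dominates, giving
\[
\sum_{j(a)=j}\tau_a^2\lesssim 2^{2j}(j\vee1)^{2\nu}\log(e+N_m)/N_m\lesssim 2^{2j}L/N_m,\qquad L=\log(e+N_m)^{2\nu+1}=\ell_m^2 .
\]
The linear term $c_a^2x_m^2/m^2$ is smaller because $x_m/m\lesssim 1$ after accounting for $N_m\ge$ const; if $N_m$ is small, the cap $4U$ handles it. For $a_m(t)\lesssim\ell_m^2\delta_{m,\est}^2$, I balance $\min\{R^22^{-2jt},2^{2j}L/N_m\}$ at $2^{j^*}\asymp N_m^{1/(2t+2)}$. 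This gives a geometric sum of order $L\,N_m^{-t/(t+1)}$, the main term.

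The remaining terms are smaller. The scaling term is $O(1/N_m)$, $b_t(K_m)\lesssim N_m^{-t}$, and $\varepsilon_m=(e+N_m)^{-2}$; all are dominated because $t/(t+1)<1$ and $N_m^{-t}\le N_m^{-t/(t+1)}$ for $N_m\ge1$. Under bounded privacy, $(m\alpha^2)^{-t/(2t+2)}\gtrsim m^{-t/(2t+1)}$ for large $m$, up to constants, so the bound matches $\ell_m^2\delta_{m,\est}(t,\alpha)^2$. I expect the main obstacle to be the Bernstein step for Laplace plus bounded noise with the exact constant $c_a$, since the threshold uses that constant exactly; everything else is bookkeeping.
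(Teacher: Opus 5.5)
Your proposal is correct and follows essentially the same route as the paper. The concentration step is a Bernstein bound for the bounded sampling part plus the Laplace part; treating these as $2m$ independent summands is equivalent to the paper's addition of the two log-moment-generating-function bounds with scale $c_a$. This is followed by a union bound over the $2^{K_m}$ coefficients, the oracle bound $\frac94\min\{\theta_a^2,\tau_a^2\}$, the $4U$ cap off the event, and level-wise balancing for the rate.

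Two cosmetic remarks. First, the inequality $\sum_k\min\{\theta_{jk}^2,\tau_{jk}^2\}\le\min\{E_j(f),\sum_k\tau_{jk}^2\}$ holds term by term and does not need the thresholds to be constant within a level. They need not be constant, since $B_a=\|\psi_a\|_\infty$ varies across boundary wavelets. Second, balancing at $2^{j^*}\asymp N_m^{1/(2t+2)}$ instead of the paper's $(N_m/\ell_m^2)^{1/(2t+2)}$ only changes the power of the logarithm, which stays within $\ell_m^2$.
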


\begin{proof}
We prove the two assertions in turn, starting with the risk bound in three steps. We first show that, on an event $\mathcal E_m$ of probability at least $1-\varepsilon_m$, every released coefficient lies within $\tau_a/2$ of its mean. We then bound the error of each thresholded coefficient on $\mathcal E_m$. Finally, we combine these bounds through Parseval's identity, together with the contribution of the levels that are not released and a crude bound off $\mathcal E_m$.

Fix $a\in\calI_{K_m}$. The coefficient released by individual $i$ decomposes as $Z_{i,K_m,a}-\theta_a=Y_{i,a}+\sigma_aW_{i,a}$, where $Y_{i,a}=\psi_a(X_i)-\theta_a$ is the sampling error and the Laplace variable $W_{i,a}$ is independent of $Y_{i,a}$. The sampling error is centred, satisfies $|Y_{i,a}|\leq2B_a$ because $|\psi_a(X_i)|\leq B_a$ and $|\theta_a|\leq\E_f|\psi_a(X_i)|\leq B_a$, and has variance at most $U$, as in the proof of Lemma~\ref{lem:release-moments}. Bernstein's bound on its moment generating function therefore gives, for $|\lambda|<3/(2B_a)$,
$$\log\E_fe^{\lambda Y_{i,a}}\leq\frac{U\lambda^2}{2(1-2B_a|\lambda|/3)}.$$
For the privacy noise, $\E e^{uW_{i,a}}=(1-u^2)^{-1}$ for $|u|<1$, and since
$$-\log(1-u^2)=\sum_{k=1}^\infty\frac{u^{2k}}{k}\leq\frac{u^2}{1-u^2}\leq\frac{u^2}{1-|u|},$$
we get for $|\lambda|<\sigma_a^{-1}$,
$$\log\E e^{\lambda\sigma_aW_{i,a}}\leq\frac{\sigma_a^2\lambda^2}{1-\sigma_a|\lambda|}=\frac{2\sigma_a^2\lambda^2}{2(1-\sigma_a|\lambda|)}.$$
By independence the two bounds add, and since $2B_a/3\leq c_a$ and $\sigma_a\leq c_a$, for $|\lambda|<c_a^{-1}$,
$$\log\E_f^Qe^{\lambda(Z_{i,K_m,a}-\theta_a)}\leq\frac{(U+2\sigma_a^2)\lambda^2}{2(1-c_a|\lambda|)}.$$
Since $\widehat\theta_a-\theta_a$ is the average of $m$ independent copies of $Z_{i,K_m,a}-\theta_a$, it follows that, with $\kappa_a=(U+2\sigma_a^2)/m$ and for $|\lambda|<m/c_a$,
$$\log\E_f^{Q^m}e^{\lambda(\widehat\theta_a-\theta_a)}\leq\frac{\kappa_a\lambda^2}{2(1-c_a|\lambda|/m)}.$$
Fix $x>0$ and put $\zeta_a(x)=\sqrt{2\kappa_ax}+c_ax/m$. For $\lambda=\sqrt{2\kappa_ax}/\{\kappa_a+(c_a/m)\sqrt{2\kappa_ax}\}$, which lies in $(0,m/c_a)$, a direct computation gives
$$\lambda\zeta_a(x)-\frac{\kappa_a\lambda^2}{2(1-c_a\lambda/m)}=x,$$
so that Markov's inequality applied to $e^{\lambda(\widehat\theta_a-\theta_a)}$ yields $P_f^{Q^m}\{\widehat\theta_a-\theta_a>\zeta_a(x)\}\leq e^{-x}$. The bound on the logarithmic moment generating function depends on $\lambda$ only through $|\lambda|$, so the lower tail is handled in the same way, and
$$P_f^{Q^m}\{|\widehat\theta_a-\theta_a|>\zeta_a(x)\}\leq2e^{-x}.$$
By definition of $\tau_a$, $\zeta_a(x_m)=\tau_a/2$. Let
$$\mathcal E_m=\bigcap_{a\in\calI_{K_m}}\{|\widehat\theta_a-\theta_a|\leq\tau_a/2\}.$$
A union bound over the $|\calI_{K_m}|=2^{K_m}$ released coefficients gives
$$P_f^{Q^m}(\mathcal E_m^c)\leq2^{K_m+1}e^{-x_m}=\varepsilon_m.$$

We now bound the error of each thresholded coefficient on $\mathcal E_m$, where $|\widehat\theta_a-\theta_a|\leq\tau_a/2$ for every released $a$. Let $a$ be a detail coefficient. If it is discarded, then $\widetilde\theta_a=0$ and $|\widehat\theta_a|\leq\tau_a$. The error $(\widetilde\theta_a-\theta_a)^2$ equals $\theta_a^2$, and it is at most $9\tau_a^2/4$ because $|\theta_a|\leq|\widehat\theta_a|+\tau_a/2\leq3\tau_a/2$. If it is retained, then $\widetilde\theta_a=\widehat\theta_a$ and $|\widehat\theta_a|>\tau_a$. The error is at most $\tau_a^2/4$, and it is at most $\theta_a^2$ because $|\theta_a|\geq|\widehat\theta_a|-\tau_a/2>\tau_a/2$. In both cases the error is bounded by $\theta_a^2$ and by $9\tau_a^2/4$, so that, on $\mathcal E_m$,
$$(\widetilde\theta_a-\theta_a)^2\leq\frac94\min\{\theta_a^2,\tau_a^2\}.$$

Summing over a fixed level $j$ gives
\begin{align*}
\sum_{a:j(a)=j}(\widetilde\theta_a-\theta_a)^2&\leq\frac94\sum_{a:j(a)=j}\min\{\theta_a^2,\tau_a^2\}\\
&\leq\frac94\min\left\{\sum_{a:j(a)=j}\theta_a^2,\sum_{a:j(a)=j}\tau_a^2\right\}.
\end{align*}

We finally combine these bounds. By (W1), the error of $\widehat f_m$ splits into the contributions of the different coefficients:
$$
\begin{aligned}
\|\widehat f_m-f\|_2^2
&=\sum_{\substack{a\in\calI_{K_m}\\j(a)=-1}}(\widehat\theta_a-\theta_a)^2\\
&\quad+\sum_{j=j_0}^{K_m-1}\sum_{a:j(a)=j}(\widetilde\theta_a-\theta_a)^2
+\sum_{j=K_m}^\infty E_j(f).
\end{aligned}
$$
The last sum is at most $b_t(K_m)$ by Lemma~\ref{lem:besov-tail}, and the middle one is controlled on $\mathcal E_m$ by the previous display. The scaling coefficients are not thresholded, and by independence across individuals and Lemma~\ref{lem:release-moments}, $\E_f^{Q^m}(\widehat\theta_a-\theta_a)^2=\Var_f^Q(Z_{i,K_m,a})/m\leq(U+2\sigma_a^2)/m$ for each of them. It remains to account for the projection and for $\mathcal E_m^c$. Since $\|f\|_2^2\leq\|f\|_\infty\int_0^1f\leq U$, the density $f$ lies in the closed convex ball onto which $\widehat f_m$ is projected, and projection onto a closed convex set does not increase distances to its points, so that $\|\widetilde f_m-f\|_2\leq\|\widehat f_m-f\|_2$. As $\widetilde f_m$ and $f$ both lie in that ball, also $\|\widetilde f_m-f\|_2^2\leq4U$. Therefore
$$\|\widetilde f_m-f\|_2^2\leq\|\widehat f_m-f\|_2^2\1_{\mathcal E_m}+4U\1_{\mathcal E_m^c},$$
and taking expectations, with the three bounds above and $P_f^{Q^m}(\mathcal E_m^c)\leq\varepsilon_m$, gives $\E_f^{Q^m}\|\widetilde f_m-f\|_2^2\leq A_m(t)$. Together with $\|\widetilde f_m-f\|_2^2\leq4U$, this yields $\E_f^{Q^m}\|\widetilde f_m-f\|_2^2\leq a_m(t)$ for every $f\in\Sigma(t,R,U)$, which proves the risk bound.

It remains to establish the rate under bounded privacy. In this part, constants may also depend on $t$. Put $H_m=\log(e+N_m)$, so that $\ell_m^2=H_m^{2\nu+1}$. Since $a_m(t)\leq4U$ and $\ell_m^2\delta_{m,\est}(t,\alpha)^2\geq N_m^{-t/(t+1)}$, the claim is immediate when $N_m$ is bounded, and we may then assume that $N_m$ exceeds a constant. Then $K_m=\lceil\frac12\log_2(e+N_m)\rceil$, so that
$$K_m\lesssim H_m,\qquad 2^{K_m}\asymp N_m^{1/2},\qquad x_m\lesssim H_m.$$
We bound the four terms of $A_m(t)$ in turn. There are $2^{j_0}$ coefficients at the initial resolution level, and $U+2\sigma_a^2\lesssim\alpha^{-2}$ for each of them by Lemma~\ref{lem:release-moments} and $\alpha\leq\bar\alpha$, so the first term is $O(N_m^{-1})$. Next, $b_t(K_m)\lesssim2^{-2K_mt}\leq N_m^{-t}$ and $4U\varepsilon_m\lesssim N_m^{-2}$. For the other coefficients, let $a$ be an index at level $j<K_m$. Lemmas~\ref{lem:wavelet-basic-bounds} and~\ref{lem:release-moments}, with $\alpha\leq\bar\alpha$, give $B_a^2\lesssim2^j\lesssim\alpha^{-2}2^j$ and $U+2\sigma_a^2\lesssim\alpha^{-2}(j\vee1)^{2\nu}2^j$, hence also $c_a^2\lesssim\alpha^{-2}(j\vee1)^{2\nu}2^j$. Since $(x+y)^2\leq2x^2+2y^2$, $x_m\lesssim H_m$ and $H_m\lesssim m$,
$$\tau_a^2\lesssim\frac{(j\vee1)^{2\nu}2^j}{m\alpha^2}\Bigl(x_m+\frac{x_m^2}{m}\Bigr)\lesssim\frac{(j\vee1)^{2\nu}2^jH_m}{N_m}.$$
As level $j$ contains $2^j$ coefficients and $j\vee1\leq K_m\lesssim H_m$,
$$\sum_{a:j(a)=j}\tau_a^2\lesssim\frac{2^{2j}}{R_m},\qquad R_m=\frac{N_m}{H_m^{2\nu+1}}.$$
Let $k_m$ be an integer with $2^{k_m}\asymp R_m^{1/(2t+2)}$. Since $R_m\to\infty$ and $R_m^{1/(2t+2)}\leq N_m^{1/(2t+2)}$ is of smaller order than $2^{K_m}\asymp N_m^{1/2}$, one has $j_0\leq k_m<K_m$ for $N_m$ large enough. Using the bound in $\tau_a$ at the levels $j\leq k_m$ and the bound $R^22^{-2jt}$ at the levels $j>k_m$, we obtain
$$
\begin{aligned}
&\sum_{j=j_0}^{K_m-1}\min\Bigl\{R^22^{-2jt},\sum_{a:j(a)=j}\tau_a^2\Bigr\}\\
&\quad\lesssim\frac1{R_m}\sum_{j=j_0}^{k_m}2^{2j}+\sum_{j>k_m}2^{-2jt}\\
&\quad\lesssim\frac{2^{2k_m}}{R_m}+2^{-2k_mt}\asymp R_m^{-t/(t+1)},
\end{aligned}
$$
the choice of $k_m$ balancing the two terms. Collecting the four bounds,
$$
\begin{aligned}
a_m(t)&\lesssim R_m^{-t/(t+1)}+N_m^{-1}+N_m^{-t}+N_m^{-2}\\
&\lesssim H_m^{(2\nu+1)t/(t+1)}N_m^{-t/(t+1)},
\end{aligned}
$$
because $t/(t+1)<\min\{1,t\}$ and $H_m\geq1$. Finally, $H_m^{(2\nu+1)t/(t+1)}\leq H_m^{2\nu+1}=\ell_m^2$ and $N_m^{-t/(t+1)}\leq\delta_{m,\est}(t,\alpha)^2$, so that $a_m(t)\lesssim\ell_m^2\delta_{m,\est}(t,\alpha)^2$, which concludes the proof.
\end{proof}

\section{Confidence set based on projected loss}
\label{app:projected-loss-confidence}

This appendix proves Theorem~\ref{thm:projected-loss}, and with it the positive side of Theorem~\ref{thm:main-adaptation}. The construction follows the projected-loss approach to $L^2$ confidence sets of \citet{robinsAdaptiveNonparametricConfidence2006} and \citet{bullAdaptiveConfidenceSets2012}, presented in \citet{gineMathematicalFoundationsInfiniteDimensional2016}: an independent sample split is used to estimate the squared projected error of a preliminary estimator by a quadratic statistic, and the radius is derived from this estimate. \citet{lam-weilMinimaxOptimalGoodnessoffit2021} use a closely related statistic of locally private coefficients for goodness-of-fit testing. In our setting, the centre is the adaptive private estimator of Appendix~\ref{app:wavelets-privacy-estimation}, computed on the first split, and the statistic is formed from the private wavelet coefficients released on the second split. The main difference with the non-private construction is that the variance of the statistic, and hence the radius, must account for the covariance of the privacy noise.

Throughout this appendix, $m\geq2$ denotes the size of the second split, on which the loss is estimated, and the resolution $J>j_0$ is fixed independently of the data of that split. The quantities $v_J$, $w_J$, $\mathcal H_{\eta,J}$ and $F_\eta(J)$ are those of Section~\ref{sec:construction}, with $m$ in place of $m_2$.

\subsection{Conditional moment bounds}

The projected-loss statistic is built from inner products of the released coefficient vectors, so its variance is governed by the covariance of a single private release. We first bound this covariance by a deterministic matrix, uniformly over densities bounded by $U$. Recall from Section~\ref{sec:construction} that $e_J=\theta_J(1)=(\langle1,\psi_a\rangle)_{a\in\calI_J}$ and
$$\mathsf V_J=U(I-e_Je_J^\top)+2\operatorname{diag}_{a\in\calI_J}(\sigma_{J,a}^2).$$
The first term bounds the covariance of the sampling part $(\psi_a(X_i))_{a\in\calI_J}$ of the release; the projection $I-e_Je_J^\top$ reflects that the constant function, whose coefficient vector is $e_J$, has zero variance under every density. The second term is the covariance of the Laplace noise.

\begin{lem}
\label{lem:release-covariance}
For every $J>j_0$ and every density $f$ satisfying $\|f\|_\infty\leq U$,
$$\Cov_f^Q(Z_{i,J})\preceq\mathsf V_J.$$
\end{lem}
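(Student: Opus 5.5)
We split the covariance of $Z_{i,J}$ into a sampling part and a noise part, and bound each. Writing $Z_{i,J}=\Psi_J(X_i)+D_JW_i$ with $\Psi_J(x)=(\psi_a(x))_{a\in\calI_J}$, $D_J=\operatorname{diag}(\sigma_{J,a})$, and $W_i$ a vector of independent standard Laplace variables independent of $X_i$, the cross-covariance vanishes. Hence
$$\Cov_f^Q(Z_{i,J})=\Cov_f\{\Psi_J(X_i)\}+2\operatorname{diag}_{a\in\calI_J}(\sigma_{J,a}^2),$$
because each $W_{i,a}$ is centred with variance two. The noise term is exactly the second summand of $\mathsf V_J$, so it remains to show $\Cov_f\{\Psi_J(X_i)\}\preceq U(I-e_Je_J^\top)$.

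Fix $u\in\mathbb R^{\calI_J}$ and set $g_u=\sum_{a\in\calI_J}u_a\psi_a$, a function in the range of $K_J$. Then
$$u^\top\Cov_f\{\Psi_J(X_i)\}u=\Var_f\{g_u(X_i)\}.$$
We decompose $u=\langle u,e_J\rangle e_J+u^\perp$ with $u^\perp\perp e_J$. Since $\|e_J\|_{\ell^2}=\|K_J1\|_2=\|1\|_2=1$ by (W1) and (W4), this gives $g_u=\langle u,e_J\rangle\cdot 1+g_{u^\perp}$: the coefficient vector $e_J$ corresponds to the constant function $K_J1=1$, which lies in the span by (W4). Adding a constant does not change the variance, so
$$\Var_f\{g_u(X_i)\}=\Var_f\{g_{u^\perp}(X_i)\}\le\E_f\{g_{u^\perp}(X_i)^2\}=\int g_{u^\perp}^2f\le U\|g_{u^\perp}\|_2^2.$$
By orthonormality (W1), $\|g_{u^\perp}\|_2^2=\|u^\perp\|_{\ell^2}^2=u^\top(I-e_Je_J^\top)u$. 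This proves the sampling bound, and adding the noise term gives the lemma.

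The only delicate point is the identification $\sum_a (e_J)_a\psi_a=1$. It requires $1\in\operatorname{span}\{\psi_a:a\in\calI_J\}$, which holds because (W4) places the constants in the span of the scaling functions, all of which belong to $\calI_J$. It also requires $\|e_J\|_{\ell^2}=1$, so that $I-e_Je_J^\top$ is the orthogonal projection onto $e_J^\perp$. Once this is checked the argument is routine. Measurability and finiteness of moments are immediate, since $\Psi_J$ is bounded by Lemma~\ref{lem:wavelet-basic-bounds} and Laplace variables have all moments.
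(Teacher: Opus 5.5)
Your proof is correct and follows the paper's argument. Like you, the paper reduces to the quadratic form $c^\top\Cov_f^Q(Z_{i,J})c$, splits off the Laplace term $2c^\top\operatorname{diag}(\sigma_{J,a}^2)c$, and bounds $\Var_f\{h(X_i)\}$ by $U$ times the second moment of $h$ about the constant $c^\top e_J=\int_0^1h$, which is exactly your $g_{u^\perp}=g_u-\langle u,e_J\rangle$. The only cosmetic difference is that the paper expands $\int_0^1(h-c^\top e_J)^2=\|h\|_2^2-(c^\top e_J)^2$ directly, using only $\langle h,1\rangle=c^\top e_J$ and $\|1\|_2=1$, so it never needs the identification $\sum_a(e_J)_a\psi_a=1$ that you flag as the delicate point.
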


\begin{proof}
Let $c=(c_a)_{a\in\calI_J}$ and $h=\sum_{a\in\calI_J}c_a\psi_a$, so that $c^\top Z_{i,J}=h(X_i)+\sum_{a\in\calI_J}c_a\sigma_{J,a}W_{i,a}$. The Laplace variables are independent of $X_i$ and of one another, with variance two, so
$$c^\top\Cov_f^Q(Z_{i,J})\,c=\Var_f\{h(X_i)\}+2c^\top\operatorname{diag}_{a\in\calI_J}(\sigma_{J,a}^2)\,c.$$
A variance is at most the second moment about any constant. Taking the constant $\int_0^1h=\langle h,1\rangle=c^\top e_J$, and using $f\leq U$ and (W1), one obtains
$$
\begin{aligned}
\Var_f\{h(X_i)\}
&\leq U\int_0^1\{h(x)-c^\top e_J\}^2\,dx\\
&=U\{\|h\|_2^2-(c^\top e_J)^2\}\\
&=Uc^\top(I-e_Je_J^\top)c.
\end{aligned}
$$
Hence $c^\top\Cov_f^Q(Z_{i,J})\,c\leq c^\top\mathsf V_Jc$ for every $c$, which is the claim.
\end{proof}

We now turn to the projected-loss statistic itself. As in the projected-loss construction of \citet{gineMathematicalFoundationsInfiniteDimensional2016}, sample splitting allows us to condition on the estimator constructed from the first split and treat as fixed when analysing the second split.

Let $g\in L^2([0,1])$ be computed from the first split, so that it is independent of the reports $Z_{1,J},\ldots,Z_{m,J}$ of the second split, and recall that $\E_2$ and $\Var_2$ denote conditional moments given the first split. The statistic of Section~\ref{sec:construction} is
$$U_m(g,J)=\frac{1}{m(m-1)}\sum_{i\neq i'}\langle Z_{i,J}-\theta_J(g),Z_{i',J}-\theta_J(g)\rangle.$$
Its target is the squared distance between $f$ and $g$ after projection by $K_J$,
$$q_J(g)^2=\|\mu\|_{\ell^2}^2=\|K_J(f-g)\|_2^2,\qquad \mu=\theta_J(f)-\theta_J(g),$$
where the second identity follows from (W1). The next lemma shows that $U_m(g,J)$ is conditionally unbiased for $q_J(g)^2$ and bounds its conditional variance through $v_J=\|\mathsf V_J\|_{\op}/m$ and $w_J^2=2\operatorname{tr}(\mathsf V_J^2)/\{m(m-1)\}$.

\begin{lem}
\label{lem:projected-loss-moments}
For every density $f$ satisfying $\|f\|_\infty\leq U$, one has $\E_2U_m(g,J)=q_J(g)^2$,
$$\Var_2\{U_m(g,J)\}\leq4v_Jq_J(g)^2+w_J^2,$$
and
$$\E_2U_m(g,J)^+\leq q_J(g)^2+\frac12\sqrt{4v_Jq_J(g)^2+w_J^2}.$$
\end{lem}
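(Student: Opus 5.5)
We write $Y_i=Z_{i,J}-\theta_J(f)$, so that, conditionally on the first split, $Y_1,\ldots,Y_m$ are i.i.d. and centred with covariance $\Sigma=\Cov_f^Q(Z_{i,J})$. We have $\Sigma\preceq\mathsf V_J$ by Lemma~\ref{lem:release-covariance}, and the means are given by Lemma~\ref{lem:release-moments}. Since $g$ is independent of the second split, $\mu=\theta_J(f)-\theta_J(g)$ is a fixed vector under $\E_2$. We then have $Z_{i,J}-\theta_J(g)=Y_i+\mu$. Expanding the inner product gives the Hoeffding decomposition
$$U_m(g,J)=\|\mu\|_{\ell^2}^2+\frac{2}{m}\sum_{i=1}^m\langle Y_i,\mu\rangle+\frac{1}{m(m-1)}\sum_{i\neq i'}\langle Y_i,Y_{i'}\rangle,$$
because each ordered pair contributes $\langle Y_i,\mu\rangle+\langle \mu,Y_{i'}\rangle$, and summing over $i\ne i'$ and dividing by $m(m-1)$ produces the linear term with factor $2/m$. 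Both random terms are centred, which gives $\E_2U_m(g,J)=\|\mu\|^2=q_J(g)^2$ by (W1).

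For the variance, we use that the linear and degenerate terms are uncorrelated: $\E\langle Y_i,\mu\rangle\langle Y_k,Y_l\rangle=0$ for $k\neq l$, since one of $k,l$ differs from $i$ and that index has mean zero. The linear term has variance $\frac{4}{m}\mu^\top\Sigma\mu\le\frac4m\|\mathsf V_J\|_{\op}\|\mu\|^2=4v_Jq_J(g)^2$. For the degenerate term, write it as $\frac{2}{m(m-1)}\sum_{i<i'}\langle Y_i,Y_{i'}\rangle$. Distinct unordered pairs are orthogonal, and $\E\langle Y_1,Y_2\rangle^2=\operatorname{tr}(\Sigma^2)$, so its variance is $\frac{2\operatorname{tr}(\Sigma^2)}{m(m-1)}$. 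The step needing care is $\operatorname{tr}(\Sigma^2)\le\operatorname{tr}(\mathsf V_J^2)$. It follows from $0\preceq\Sigma\preceq\mathsf V_J$, because
$$\operatorname{tr}(\Sigma^2)=\operatorname{tr}(\Sigma^{1/2}\Sigma\Sigma^{1/2})\le\operatorname{tr}(\Sigma^{1/2}\mathsf V_J\Sigma^{1/2})=\operatorname{tr}(\mathsf V_J^{1/2}\Sigma\mathsf V_J^{1/2})\le\operatorname{tr}(\mathsf V_J^2).$$
Summing the two contributions gives the variance bound.

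For the positive part, we use $x^+=\{x+|x|\}/2$. Writing $x=U_m(g,J)$ and $q=q_J(g)^2$, we have $x^+\le q+(x-q)^+$ when $q\ge0$. We also have $\E(x-q)^+=\frac12\E|x-q|$, since $x-q$ is centred. Cauchy--Schwarz then gives $\E|x-q|\le\sqrt{\Var_2(x)}$, and the variance bound above yields the claim. No step is a genuine obstacle; the only points to check are the trace inequality under the Loewner order and the orthogonality of the Hoeffding components.
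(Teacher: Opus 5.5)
Your proof is correct and follows essentially the same route as the paper: you use the same decomposition of $U_m(g,J)$ into $q_J(g)^2$ plus a centred linear term and a centred degenerate term, the same variance computation with $\mu^\top\Sigma\mu\le\|\mathsf V_J\|_{\op}\|\mu\|_{\ell^2}^2$, and the same bound $U_m(g,J)^+\le q_J(g)^2+\{U_m(g,J)-q_J(g)^2\}^+$ followed by $\E_2(\cdot)^+=\tfrac12\E_2|\cdot|$ and Cauchy--Schwarz. The only cosmetic difference is the trace step: with your $\Sigma=\Cov_f^Q(Z_{i,J})$, the paper writes $\operatorname{tr}(\mathsf V_J^2)-\operatorname{tr}(\Sigma^2)=\operatorname{tr}\{(\mathsf V_J-\Sigma)(\mathsf V_J+\Sigma)\}\ge0$ instead of your congruence chain through square roots, and both arguments are valid.
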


\begin{proof}
Write $\mathsf C_J=\Cov_f^Q(Z_{i,J})$ and $\chi_i=Z_{i,J}-\theta_J(f)$ for $i=1,\ldots,m$. The vectors $\chi_1,\ldots,\chi_m$ are independent of the first split and of one another, centred, with covariance $\mathsf C_J$, and $Z_{i,J}-\theta_J(g)=\mu+\chi_i$. Substituting this identity in the definition of $U_m(g,J)$, one obtains
$$U_m(g,J)=q_J(g)^2+\Lambda+\Omega,$$
where
$$\Lambda=\frac2m\sum_{i=1}^m\mu^\top\chi_i,\qquad\Omega=\frac1{m(m-1)}\sum_{i\neq i'}\chi_i^\top\chi_{i'}$$
are the linear and degenerate parts of the statistic. Both are conditionally centred, the second because $\chi_i$ and $\chi_{i'}$ are independent and centred for $i\neq i'$. Hence $\E_2U_m(g,J)=q_J(g)^2$.

We next compute the conditional variance. The two parts are conditionally uncorrelated: $\E_2(\Lambda\Omega)$ is a combination of terms $\E_2\{(\mu^\top\chi_i)(\chi_{i_1}^\top\chi_{i_2})\}$ with $i_1\neq i_2$, and in each of them one of $\chi_{i_1}$ and $\chi_{i_2}$ appears only once, so that independence and centring make the term vanish. By independence,
$$
\begin{aligned}
\Var_2(\Lambda)&=\frac4{m^2}\sum_{i=1}^m\Var_2(\mu^\top\chi_i)\\
&=\frac4m\mu^\top\mathsf C_J\mu.
\end{aligned}
$$
In the expansion
$$
\begin{aligned}
\E_2\Omega^2&=\frac1{m^2(m-1)^2}\\
&\quad\times\sum_{i_1\neq i_2}\sum_{i_3\neq i_4}\E_2\{(\chi_{i_1}^\top\chi_{i_2})(\chi_{i_3}^\top\chi_{i_4})\},
\end{aligned}
$$
a term vanishes as soon as one of the indices appears only once, that is, unless $\{i_3,i_4\}=\{i_1,i_2\}$. In that case, integrating first over $\chi_{i_2}$,
$$\E_2(\chi_{i_1}^\top\chi_{i_2})^2=\E_2(\chi_{i_1}^\top\mathsf C_J\chi_{i_1})=\operatorname{tr}(\mathsf C_J^2).$$
Each of the $m(m-1)$ ordered pairs $(i_1,i_2)$ has two such partners, so that $\E_2\Omega^2=2\operatorname{tr}(\mathsf C_J^2)/\{m(m-1)\}$, and therefore
$$\Var_2\{U_m(g,J)\}=\frac4m\mu^\top\mathsf C_J\mu+\frac{2\operatorname{tr}(\mathsf C_J^2)}{m(m-1)}.$$

By Lemma~\ref{lem:release-covariance}, $0\preceq\mathsf C_J\preceq\mathsf V_J$. Hence $\mu^\top\mathsf C_J\mu\leq\|\mathsf V_J\|_{\op}\|\mu\|_{\ell^2}^2$, and
$$\operatorname{tr}(\mathsf V_J^2)-\operatorname{tr}(\mathsf C_J^2)=\operatorname{tr}\{(\mathsf V_J-\mathsf C_J)(\mathsf V_J+\mathsf C_J)\}\geq0,$$
since the trace of a product of two positive semidefinite matrices is non-negative. By the definitions of $v_J$, $w_J$ and $q_J(g)$, this gives $\Var_2\{U_m(g,J)\}\leq4v_Jq_J(g)^2+w_J^2$.

Finally, since $q_J(g)^2\geq0$, one has $U_m(g,J)^+\leq q_J(g)^2+(\Lambda+\Omega)^+$. As $\Lambda+\Omega$ is conditionally centred, its positive and negative parts have the same conditional expectation, so that, by the Cauchy-Schwarz inequality,
$$\E_2(\Lambda+\Omega)^+=\frac12\E_2|\Lambda+\Omega|\leq\frac12\bigl[\Var_2\{U_m(g,J)\}\bigr]^{1/2}.$$
The variance bound completes the proof.
\end{proof}

\subsection{Upper bound for the projected loss}

Lemma \ref{lem:projected-loss-moments} shows that $U_m(g,J)$ is centred at the squared projected loss $q_J(g)^2$ and that its conditional variance grows at most linearly with this unknown quantity. We now invert this variance bound to obtain an observable upper bound for $q_J(g)^2$. This is the step that turns the projected-loss statistic into the confidence radius used in Section \ref{sec:construction}. The argument is related to the variance-based control of private quadratic statistics in \citet{lam-weilMinimaxOptimalGoodnessoffit2021}, but here we solve the resulting inequality explicitly because the unknown projected loss itself appears in the variance.

Recall from Section~\ref{sec:construction} that, for $\eta\in(0,1)$ and $u\geq0$, with $c_\eta=(1-\eta)/\eta$,
$$
\begin{aligned}
\mathcal H_{\eta,J}(u)&=u+2c_\eta v_J\\
&\quad+\sqrt{4c_\eta v_Ju+4c_\eta^2v_J^2+c_\eta w_J^2},
\end{aligned}
$$
and $F_\eta(J)=\mathcal H_{\eta,J}(0)$. The bound is applied to the positive part $U_m(g,J)^+$, since $q_J(g)^2$ is non-negative whereas the statistic may be negative.

\begin{prop}
\label{prop:projected-loss-upper}
For every $\gamma\in(0,1)$, conditionally on the data used to construct $g$ and uniformly over $f\in\Sigma(r,R,U)$,
$$P_2\left[q_J(g)^2\leq\mathcal H_{\gamma,J}\{U_m(g,J)^+\}\right]\geq1-\gamma\quad\text{almost surely}.$$
Moreover, for every $\eta\in(0,1)$ and $u\geq0$,
$$\mathcal H_{\eta,J}(u)\leq2u+F_\eta(J).$$
\end{prop}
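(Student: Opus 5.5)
The plan is to apply the one-sided Cantelli inequality to $U_m(g,J)$ conditionally on the first split, and then invert the resulting quadratic inequality in $q=q_J(g)^2$. Write $q=q_J(g)^2$ and $\Sigma^2=4v_Jq+w_J^2$. Every $f\in\Sigma(r,R,U)$ satisfies $\|f\|_\infty\le U$, so Lemma~\ref{lem:projected-loss-moments} applies and gives $\E_2U_m(g,J)=q$ and $\Var_2 U_m(g,J)\le\Sigma^2$. Cantelli's inequality then gives, for $\lambda\ge0$,
$$P_2\{U_m(g,J)\le q-\lambda\}\le\frac{\Sigma^2}{\Sigma^2+\lambda^2}.$$
Choose $\lambda=\sqrt{c_\gamma}\,\Sigma$ with $c_\gamma=(1-\gamma)/\gamma$. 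The right-hand side becomes $1/(1+c_\gamma)=\gamma$. Hence, with $P_2$-probability at least $1-\gamma$,
$$U_m(g,J)^+\ \ge\ U_m(g,J)\ >\ q-\sqrt{c_\gamma(4v_Jq+w_J^2)}.$$
This bound holds almost surely in the first split and uniformly in $f$, because the constants do not depend on $f$.

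On this event, set $u=U_m(g,J)^+$. Then $q-u<\sqrt{c_\gamma(4v_Jq+w_J^2)}$. If $q\le u$, the claim is immediate, since $\mathcal H_{\gamma,J}(u)\ge u$. Otherwise, squaring gives
$$q^2-2(u+2c_\gamma v_J)q+u^2-c_\gamma w_J^2<0.$$
So $q$ lies below the larger root of this quadratic, which is
$$u+2c_\gamma v_J+\sqrt{(u+2c_\gamma v_J)^2-u^2+c_\gamma w_J^2}=u+2c_\gamma v_J+\sqrt{4c_\gamma v_Ju+4c_\gamma^2v_J^2+c_\gamma w_J^2}.$$
This is exactly $\mathcal H_{\gamma,J}(u)$, which proves the first claim. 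The only care needed is the case split on the sign of $q-u$ before squaring; apart from that, the main obstacle is just confirming that this root expression matches the definition of $\mathcal H$.

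For the second claim, use $\sqrt{a+b}\le\sqrt a+\sqrt b$ to split the square root as
$$\sqrt{4c_\eta v_Ju+4c_\eta^2v_J^2+c_\eta w_J^2}\ \le\ \sqrt{4c_\eta v_Ju}+\sqrt{4c_\eta^2v_J^2+c_\eta w_J^2}.$$
Then apply AM--GM to the first term: $\sqrt{4c_\eta v_Ju}=2\sqrt{u\cdot c_\eta v_J}\le u+c_\eta v_J$. This yields
$$\mathcal H_{\eta,J}(u)\le 2u+3c_\eta v_J+\sqrt{4c_\eta^2v_J^2+c_\eta w_J^2}.$$
That is an extra $c_\eta v_J$ beyond $F_\eta(J)$, so this route is too crude. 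Instead, use concavity of the square root: $\sqrt{A+4c_\eta v_Ju}\le\sqrt A+\frac{4c_\eta v_Ju}{2\sqrt A}$ with $A=4c_\eta^2v_J^2+c_\eta w_J^2$. Since $\sqrt A\ge2c_\eta v_J$, the second term is at most $u$. Therefore
$$\mathcal H_{\eta,J}(u)\le u+2c_\eta v_J+\sqrt A+u=2u+F_\eta(J).$$
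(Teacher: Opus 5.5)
Your proof is correct and follows the paper's argument: conditional Cantelli with deviation $\sqrt{c_\gamma(4v_Jq+w_J^2)}$, the case split on whether $q\le u$, and identification of the larger root of the quadratic with $\mathcal H_{\gamma,J}(u)$. For the second claim, your tangent-line bound $\sqrt{A+x}\le\sqrt A+x/(2\sqrt A)$ is the integrated form of the paper's bound $\partial_u\mathcal H_{\eta,J}(u)\le 2$, and the abandoned AM--GM detour can simply be deleted.
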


\begin{proof}
Fix $\gamma\in(0,1)$ and write
$$q^2=q_J(g)^2,\qquad U=U_m(g,J).$$
By Lemma \ref{lem:projected-loss-moments},
$$\E_2U=q^2,\qquad \Var_2(U)\leq4v_Jq^2+w_J^2.$$
Cantelli's inequality states that, for any centred random variable $Y$ with finite variance and any $a>0$,
$$P(Y\leq-a)\leq\frac{\Var(Y)}{\Var(Y)+a^2}.$$
Apply this with $Y=U-q^2$ and
$$a=\sqrt{c_\gamma(4v_Jq^2+w_J^2)},\qquad c_\gamma=\frac{1-\gamma}{\gamma}.$$
Since $\Var_2(U)\leq4v_Jq^2+w_J^2$,
$$
\begin{aligned}
&P_2\left\{
q^2-U>\sqrt{c_\gamma(4v_Jq^2+w_J^2)}
\right\}\\
&\quad\leq
\frac{4v_Jq^2+w_J^2}
{4v_Jq^2+w_J^2+c_\gamma(4v_Jq^2+w_J^2)}\\
&\quad=
\frac{1}{1+c_\gamma}
=\gamma.
\end{aligned}
$$
Therefore, with conditional probability at least $1-\gamma$,
$$q^2\leq U+\sqrt{c_\gamma(4v_Jq^2+w_J^2)}.$$
Since $U\leq U^+$, the same event implies
$$q^2\leq u+\sqrt{c_\gamma(4v_Jq^2+w_J^2)},\qquad u=U^+.$$

If $q^2\leq u$, then $q^2\leq\mathcal H_{\gamma,J}(u)$ immediately because $\mathcal H_{\gamma,J}(u)\geq u$. Suppose instead that $q^2>u$. Both sides of
$$q^2-u\leq\sqrt{c_\gamma(4v_Jq^2+w_J^2)}$$
are then non-negative, so we may square to obtain
$$(q^2-u)^2\leq4c_\gamma v_Jq^2+c_\gamma w_J^2.$$
Writing $y=q^2$, this is equivalent to
$$y^2-(2u+4c_\gamma v_J)y+u^2-c_\gamma w_J^2\leq0.$$
The larger root of the corresponding quadratic equation is
$$\begin{aligned}y_+&=\frac{2u+4c_\gamma v_J+\sqrt{(2u+4c_\gamma v_J)^2-4(u^2-c_\gamma w_J^2)}}{2}\\&=u+2c_\gamma v_J+\sqrt{4c_\gamma v_Ju+4c_\gamma^2v_J^2+c_\gamma w_J^2}\\&=\mathcal H_{\gamma,J}(u).\end{aligned}$$
Since the quadratic is non-positive only between its two roots, we have $q^2=y\leq y_+$. Hence
$$P_2\left[q_J(g)^2\leq\mathcal H_{\gamma,J}\{U_m(g,J)^+\}\right]\geq1-\gamma.$$
The bound is uniform over $f\in\Sigma(r,R,U)$ because the variance bound in Lemma \ref{lem:projected-loss-moments} is uniform over densities satisfying $\|f\|_\infty\leq U$.

It remains to prove the deterministic bound on $\mathcal H_{\eta,J}$. For $u\geq0$,
$$\frac{\partial}{\partial u}\mathcal H_{\eta,J}(u)=1+\frac{2c_\eta v_J}{\sqrt{4c_\eta v_Ju+4c_\eta^2v_J^2+c_\eta w_J^2}}.$$
The denominator is at least $2c_\eta v_J$, so
$$\frac{\partial}{\partial u}\mathcal H_{\eta,J}(u)\leq2.$$
Therefore
$$\mathcal H_{\eta,J}(u)-\mathcal H_{\eta,J}(0)=\int_0^u\frac{\partial}{\partial s}\mathcal H_{\eta,J}(s)\,ds\leq2u.$$
Since $F_\eta(J)=\mathcal H_{\eta,J}(0)$, this gives
$$\mathcal H_{\eta,J}(u)\leq2u+F_\eta(J).$$
\end{proof}

\subsection{Asymptotic diameter}

We now determine the order of the deterministic part of the confidence radius. The projection level $J$ must balance the approximation error $b_r(J)$ against the uncertainty in estimating the projected loss. As in the non-private projected-loss construction of \citet{gineMathematicalFoundationsInfiniteDimensional2016}, the ordinary sampling contribution has order $2^{J/2}/m$. Under local privacy, there is an additional contribution of order $2^{3J/2}/(m\alpha^2)$, as in the private quadratic-statistic analysis of \citet{lam-weilMinimaxOptimalGoodnessoffit2021}. Our level-dependent wavelet release introduces the additional factor $(J\vee1)^{2\nu}$ because the noise scale at level $j$ is proportional to $(j\vee1)^\nu2^{j/2}/\alpha$.

Recall that
$$\Gamma_{m,\alpha}(J)=\frac{2^{J/2}}{m}+\frac{(J\vee1)^{2\nu}2^{3J/2}}{m\alpha^2},$$
and define
$$\rho_{m,\test}(r,\alpha)=m^{-2r/(4r+1)}\vee(m\alpha^2)^{-2r/(4r+3)}.$$
The first branch is obtained by balancing the Besov approximation error with the ordinary sampling term. The second is obtained by balancing it with the private term. The next lemma makes these bounds precise and shows that the deterministic resolution $J^\star$ chosen in Section \ref{sec:construction} attains the required order up to the logarithmic factor inherited from the multilevel private release.

\begin{lem}
\label{lem:finite-resolution-rates}
Under bounded privacy $\alpha\leq \bar \alpha$, for every $J>j_0$,
$$v_J\lesssim\frac1m+\frac{(J\vee1)^{2\nu}2^J}{m\alpha^2},\qquad w_J\lesssim\Gamma_{m,\alpha}(J),$$
and, for fixed $\gamma\in(0,1)$,
$$F_\gamma(J)\asymp_\gamma v_J+w_J\lesssim\Gamma_{m,\alpha}(J).$$
Moreover, the minimiser $J^\star$ of $b_r+F_\gamma$ exists, and, as $m\alpha^2\to\infty$,
$$
\begin{aligned}
b_r(J^\star)+F_\gamma(J^\star)
&\lesssim_\gamma\inf_{J>j_0}\{b_r(J)+\Gamma_{m,\alpha}(J)\}\\
&\lesssim\ell_m^2\rho_{m,\test}(r,\alpha)^2,
\end{aligned}
$$
where $\rho_{m,\test}(r,\alpha)$ is the quantity $\rho_{n,\test}(r,\alpha)$ of Section~\ref{sec:model} with $n$ replaced by $m$.
\end{lem}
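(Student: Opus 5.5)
The plan is to work directly from the explicit form of $\mathsf V_J$ and split it into its sampling and privacy parts. The bounds on $v_J$ and $w_J$ come first, then the two-sided comparison for $F_\gamma(J)$, and finally the balancing over $J$.

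\emph{Operator norm and trace.} Since $I-e_Je_J^\top$ is an orthogonal projection (by (W1) and (W4), $e_J$ is a unit vector), its operator norm is at most one. By Lemma~\ref{lem:release-moments}, $\sigma_{J,a}^2\lesssim\alpha^{-2}(j\vee1)^{2\nu}2^{j}\leq\alpha^{-2}(J\vee1)^{2\nu}2^J$. This gives $\|\mathsf V_J\|_{\op}\lesssim1+(J\vee1)^{2\nu}2^J/\alpha^2$, hence the bound on $v_J$. For $w_J$, I will use $\operatorname{tr}(\mathsf V_J^2)=\|\mathsf V_J\|_{\HS}^2\leq2U^2\operatorname{tr}(I-e_Je_J^\top)+8\sum_a\sigma_{J,a}^4$. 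The first term is at most $2U^2\cdot2^J$. The second is at most $C\alpha^{-4}\sum_{j<J}2^j(j\vee1)^{4\nu}2^{2j}\lesssim\alpha^{-4}(J\vee1)^{4\nu}2^{3J}$. Dividing by $m(m-1)\asymp m^2$ for $m\geq2$ and taking square roots gives $w_J\lesssim\Gamma_{m,\alpha}(J)$.

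\emph{Comparison for $F_\gamma$.} We have $F_\gamma(J)=2c_\gamma v_J+\sqrt{4c_\gamma^2v_J^2+c_\gamma w_J^2}$, and this is two-sided comparable to $v_J+w_J$ with constants depending on $\gamma$. To conclude $F_\gamma(J)\lesssim\Gamma_{m,\alpha}(J)$, it remains to show $v_J\lesssim\Gamma_{m,\alpha}(J)$. This holds because $1/m\leq2^{J/2}/m$ and $(J\vee1)^{2\nu}2^J/(m\alpha^2)\leq(J\vee1)^{2\nu}2^{3J/2}/(m\alpha^2)$.

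\emph{Existence of $J^\star$ and the balancing.} The function $b_r+F_\gamma$ is defined on the integers $J>j_0$. It is bounded below by $F_\gamma(J)\gtrsim w_J$, and $w_J$ tends to infinity, since $\operatorname{tr}(\mathsf V_J^2)\geq\sum_a4\sigma_{J,a}^4\gtrsim2^{3J}/\alpha^4$ using $\Delta_J\geq$ a positive constant. So the infimum over this discrete set is attained at some finite $J$. The first inequality then follows from $F_\gamma\lesssim_\gamma\Gamma_{m,\alpha}$ pointwise, since the minimiser beats any $J$.

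For the rate, I will pick $J$ with $2^J\asymp(m\alpha^2)^{1/(2r+3/2)}$, so that $2^{-2Jr}\asymp(m\alpha^2)^{-4r/(4r+3)}$. The private term is then $(J\vee1)^{2\nu}2^{3J/2}/(m\alpha^2)\asymp(J\vee1)^{2\nu}(m\alpha^2)^{-4r/(4r+3)}$. The sampling term is $2^{J/2}/m$, and bounded privacy ($\alpha\leq\bar\alpha$) makes it at most $\bar\alpha^2\,2^{J/2}/(m\alpha^2)$, which is dominated by the private term. Moreover $J\lesssim\log(e+m\alpha^2)$, so $(J\vee1)^{2\nu}\lesssim\ell_m^2$. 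Together this yields the bound $\ell_m^2(m\alpha^2)^{-4r/(4r+3)}\leq\ell_m^2\rho_{m,\test}(r,\alpha)^2$. The condition $m\alpha^2\to\infty$ guarantees that the chosen $J$ exceeds $j_0$ eventually.

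The main obstacle is bookkeeping rather than depth. I need to make sure the lower bound $\Delta_J\gtrsim1$ holds, which I would take from $\Delta_J\geq\Delta_{j_0+1}>0$ because $\Delta_J$ is monotone in $J$. I also need constants that are uniform in $J$ and do not depend on $\alpha$.
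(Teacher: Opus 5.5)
Your proposal is correct and follows essentially the same route as the paper. You use the same operator-norm and Hilbert--Schmidt bounds on $\mathsf V_J$, the same two-sided comparison for $F_\gamma(J)$, the same lower bound $\Delta_J\geq\Delta_{j_0+1}>0$ to force $F_\gamma(J)\to\infty$, and a balancing resolution $2^J\asymp(m\alpha^2)^{2/(4r+3)}$. The differences are only cosmetic: you get divergence from the trace (through $w_J$) rather than from the largest diagonal entry (through $v_J$), and you omit the $\{\log(e+m\alpha^2)\}^{2\nu}$ correction that the paper builds into $2^{J^\circ}$, which costs nothing because the paper bounds the resulting logarithmic factor by $\{\log(e+m\alpha^2)\}^{2\nu}\leq\ell_m^2$ anyway.
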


\begin{proof}
We bound $v_J$ and $w_J$ through the operator and Hilbert-Schmidt norms of $\mathsf V_J=U(I-e_Je_J^\top)+2\operatorname{diag}_{a\in\calI_J}(\sigma_{J,a}^2)$, deduce the bounds on $F_\gamma(J)$, and then evaluate the criterion defining $J^\star$ at a suitable resolution. Recall from Lemma~\ref{lem:release-moments} that $\sigma_{J,a}^2\lesssim\alpha^{-2}d_{j(a)}^2$, where $d_{-1}^2=2^{j_0}$ and $d_{j(a)}^2=(j\vee1)^{2\nu}2^j$ at level $j<J$ otherwise.

By (W1) and (W4), $e_J$ is a unit vector, so that $I-e_Je_J^\top$ is an orthogonal projection, with operator norm one and rank $|\calI_J|-1=2^J-1$. Hence
$$\|\mathsf V_J\|_{\op}\leq U+2\max_{a\in\calI_J}\sigma_{J,a}^2\lesssim1+\frac{(J\vee1)^{2\nu}2^J}{\alpha^2},$$
which gives the bound on $v_J=\|\mathsf V_J\|_{\op}/m$. Similarly, since $\sum_{a\in\calI_J}d_{j(a)}^4\lesssim1+\sum_{j=j_0}^{J-1}(j\vee1)^{4\nu}2^{3j}\lesssim(J\vee1)^{4\nu}2^{3J}$,
$$
\begin{aligned}
\|\mathsf V_J\|_{\HS}&\leq U(2^J-1)^{1/2}+2\Bigl(\sum_{a\in\calI_J}\sigma_{J,a}^4\Bigr)^{1/2}\\
&\lesssim2^{J/2}+\frac{(J\vee1)^{2\nu}2^{3J/2}}{\alpha^2}.
\end{aligned}
$$
As $\operatorname{tr}(\mathsf V_J^2)=\|\mathsf V_J\|_{\HS}^2$ and $m(m-1)\geq m^2/2$, it follows that $w_J\leq2\|\mathsf V_J\|_{\HS}/m\lesssim\Gamma_{m,\alpha}(J)$.

Next, $F_\gamma(J)=2c_\gamma v_J+(4c_\gamma^2v_J^2+c_\gamma w_J^2)^{1/2}$, and the square root lies between $\max\{2c_\gamma v_J,\sqrt{c_\gamma}w_J\}$ and $2c_\gamma v_J+\sqrt{c_\gamma}w_J$. Hence
$$\max\{4c_\gamma v_J,2\sqrt{c_\gamma}w_J\}\leq F_\gamma(J)\leq4c_\gamma v_J+\sqrt{c_\gamma}w_J,$$
so that $F_\gamma(J)\asymp_\gamma v_J+w_J$. Since $1\leq2^{J/2}$ and $2^J\leq2^{3J/2}$, the bound on $v_J$ is also at most a constant multiple of $\Gamma_{m,\alpha}(J)$, and therefore $v_J+w_J\lesssim\Gamma_{m,\alpha}(J)$.

We now show that the minimum defining $J^\star$ is attained. Enlarging $\calI_J$ adds non-negative terms to the definition \eqref{eq: delta_j} of $\Delta_J$, so that $\Delta_J\geq\Delta_{j_0+1}>0$ for every $J>j_0$. The operator norm of $\mathsf V_J$ is at least its largest diagonal entry, which, at a coefficient of level $J-1$, is at least $2\sigma_{J,a}^2=2\alpha^{-2}\Delta_J^2d_{j(a)}^2\geq2\alpha^{-2}\Delta_{j_0+1}^2(J-1)^{2\nu}2^{J-1}$. Hence, for fixed $m$ and $\alpha$, $v_J\to\infty$, and so $F_\gamma(J)\to\infty$, as $J\to\infty$. Since $b_r\geq0$ and $J$ ranges over the integers larger than $j_0$, the minimum is attained.

It remains to bound the infimum, for which we may assume that $N_m=m\alpha^2$ is large. Put $H_m=\log(e+N_m)$, so that $\ell_m^2=H_m^{2\nu+1}$, and choose an integer $J^\circ>j_0$ with
$$2^{J^\circ}\asymp\Bigl(\frac{N_m}{H_m^{2\nu}}\Bigr)^{2/(4r+3)},$$
so that $J^\circ\lesssim H_m$. At this resolution, the approximation term, and the private part of $\Gamma_{m,\alpha}(J^\circ)$ are of the same order:
$$b_r(J^\circ)\lesssim2^{-2J^\circ r}\asymp\Bigl(\frac{N_m}{H_m^{2\nu}}\Bigr)^{-4r/(4r+3)},$$
and
$$
\begin{aligned}
\frac{(J^\circ\vee1)^{2\nu}2^{3J^\circ/2}}{m\alpha^2}
&\lesssim\frac{H_m^{2\nu}}{N_m}\Bigl(\frac{N_m}{H_m^{2\nu}}\Bigr)^{3/(4r+3)}\\
&=\Bigl(\frac{N_m}{H_m^{2\nu}}\Bigr)^{-4r/(4r+3)}.
\end{aligned}
$$
The sampling part of $\Gamma_{m,\alpha}(J^\circ)$ is negligible, since its ratio to the private part is $\alpha^2/\{(J^\circ\vee1)^{2\nu}2^{J^\circ}\}\leq\bar\alpha^2/2^{J^\circ}\to0$. Since $8\nu r/(4r+3)<2\nu$ and $H_m\geq1$, one has $(N_m/H_m^{2\nu})^{-4r/(4r+3)}\leq H_m^{2\nu}N_m^{-4r/(4r+3)}$, and therefore
$$
\begin{aligned}
\inf_{J>j_0}\{b_r(J)+\Gamma_{m,\alpha}(J)\}
&\leq b_r(J^\circ)+\Gamma_{m,\alpha}(J^\circ)\\
&\lesssim\ell_m^2N_m^{-4r/(4r+3)}\\
&\leq\ell_m^2\rho_{m,\test}(r,\alpha)^2.
\end{aligned}
$$
Finally, since $J^\star$ minimises $b_r(J)+F_\gamma(J)$ and $F_\gamma(J)\lesssim_\gamma\Gamma_{m,\alpha}(J)$ for every $J$,
$$
\begin{aligned}
b_r(J^\star)+F_\gamma(J^\star)&=\inf_{J>j_0}\{b_r(J)+F_\gamma(J)\}\\
&\lesssim_\gamma\inf_{J>j_0}\{b_r(J)+\Gamma_{m,\alpha}(J)\}.
\end{aligned}
$$
\end{proof}

\begin{proof}[Proof of Theorem~\ref{thm:projected-loss}]
We prove the two assertions in turn, writing $J=J^\star$ throughout. The split assignments are deterministic, and each individual belongs to a single split. Individuals of the first split use the release of Lemma~\ref{lem:release-privacy} through level $K_{m_1}-1$, and those of the second split the same release through level $J-1$.

We first prove honesty. Let $f\in\Sigma(r,R,U)$. The centre $g_n=K_J\widetilde f_{m_1}$ is computed from the first split, so Proposition~\ref{prop:projected-loss-upper}, applied with $g=g_n$ and $m=m_2$, shows that the event
$$\|K_J(f-g_n)\|_2^2\leq\mathcal H_{\gamma,J}\{U_{m_2}(g_n,J)^+\}$$
has conditional probability at least $1-\gamma$ given the first split. Since $(I-K_J)g_n=0$, orthogonality and Lemma~\ref{lem:besov-tail} give
$$
\begin{aligned}
\|f-g_n\|_2^2&=\|K_J(f-g_n)\|_2^2+\|(I-K_J)f\|_2^2\\
&\leq\|K_J(f-g_n)\|_2^2+b_r(J),
\end{aligned}
$$
so that $\|f-g_n\|_2^2\leq R_n^2$ on that event with $R_n^2$ as \eqref{eq: Rn2}. As $f\in\Sigma(r,R,U)$, this means that $f\in C_n(\gamma)$. Taking expectations over the first split gives $P_f^{Q^n}\{f\in C_n(\gamma)\}\geq1-\gamma$ for every $f\in\Sigma(r,R,U)$ and every $n$.

It remains to bound the diameter. Every element of $C_n(\gamma)$ lies within distance $R_n$ of $g_n$, so that $|C_n(\gamma)|_2\leq2R_n$, and it suffices to bound $\E_f^{Q^n}R_n^2$. Fix $t\in[r,S)$ and $f\in\Sigma(t,R,U)\subseteq\Sigma(r,R,U)$. By the second part of Proposition~\ref{prop:projected-loss-upper},
$$R_n^2\leq b_r(J)+F_\gamma(J)+2U_{m_2}(g_n,J)^+,$$
and, by Lemma~\ref{lem:projected-loss-moments} applied given the first split, with $q_J(g_n)^2=\|K_J(f-g_n)\|_2^2$,
$$\E_2U_{m_2}(g_n,J)^+\leq q_J(g_n)^2+\frac12\sqrt{4v_Jq_J(g_n)^2+w_J^2}.$$
The projected loss depends only on the first split, and it is controlled by the risk of the estimator: since $K_J(f-g_n)=K_J(f-\widetilde f_{m_1})$ and $K_J$ is a contraction, Proposition~\ref{prop:adaptive-private-estimator} gives
$$\E_f^{Q^{m_1}}q_J(g_n)^2\leq\E_f^{Q^{m_1}}\|f-\widetilde f_{m_1}\|_2^2\leq a_{m_1}(t).$$
Combining the last three displays, using Jensen's inequality for the concave map $y\mapsto(4v_Jy+w_J^2)^{1/2}$, and then $\{4v_Ja_{m_1}(t)+w_J^2\}^{1/2}\leq2\{v_Ja_{m_1}(t)\}^{1/2}+w_J\leq a_{m_1}(t)+v_J+w_J$, we obtain
$$
\begin{aligned}
\E_f^{Q^n}R_n^2&\leq b_r(J)+F_\gamma(J)+3a_{m_1}(t)+v_J+w_J\\
&\lesssim_\gamma a_{m_1}(t)+b_r(J)+F_\gamma(J),
\end{aligned}
$$
the last step because $v_J+w_J\lesssim_\gamma F_\gamma(J)$ by Lemma~\ref{lem:finite-resolution-rates}. By Proposition~\ref{prop:adaptive-private-estimator} and Lemma~\ref{lem:finite-resolution-rates},
$$
\begin{aligned}
a_{m_1}(t)&\lesssim\ell_{m_1}^2\delta_{m_1,\est}(t,\alpha)^2,\\
b_r(J)+F_\gamma(J)&\lesssim_\gamma\ell_{m_2}^2\rho_{m_2,\test}(r,\alpha)^2.
\end{aligned}
$$
Since $\ell_m$ increases with $m$ and $n/3\leq m_1\leq m_2\leq n$, one has $\ell_{m_1}\vee\ell_{m_2}\leq\ell_n$, $\delta_{m_1,\est}(t,\alpha)\lesssim\delta_{n,\est}(t,\alpha)$ and $\rho_{m_2,\test}(r,\alpha)\lesssim\rho_{n,\test}(r,\alpha)$. Hence, uniformly over $f\in\Sigma(t,R,U)$,
$$
\begin{aligned}
\E_f^{Q^n}|C_n(\gamma)|_2^2&\leq4\E_f^{Q^n}R_n^2\\
&\lesssim_\gamma\ell_n^2\{\delta_{n,\est}(t,\alpha)\vee\rho_{n,\test}(r,\alpha)\}^2,
\end{aligned}
$$
and Markov's inequality gives, for every $M>0$,
$$
\begin{aligned}
&\sup_{f\in\Sigma(t,R,U)}P_f^{Q^n}\bigl[|C_n(\gamma)|_2\\
&\quad>M\ell_n\{\delta_{n,\est}(t,\alpha)\vee\rho_{n,\test}(r,\alpha)\}\bigr]\lesssim_\gamma M^{-2},
\end{aligned}
$$
which is the claimed uniform $O_{P_f^{Q^n}}$ bound.
\end{proof}

We can now conclude the positive side of the adaptation theorem. Theorem \ref{thm:projected-loss} already establishes honesty over the rough class and a diameter of order $\ell_n\{\delta_{n,\est}(t,\alpha)\vee\rho_{n,\test}(r,\alpha)\}$ on each class. It remains only to determine when the additional projected-loss term $\rho_{n,\test}(r,\alpha)$ is no larger than the corresponding estimation radius. This comparison is where the boundary $s=4r/3$ becomes present in the positive result.

\begin{proof}[Positive side of Theorem~\ref{thm:main-adaptation}]
Let $\mathcal R\subseteq[r,4r/3]$ have smallest element $r$, and consider the confidence set $C_n(\gamma)$ of Section~\ref{sec:construction}, which depends on $\mathcal R$ only through $r$. By Theorem~\ref{thm:projected-loss}, it is honest over $\Sigma(r,R,U)$ at level $1-\gamma$ for every $n$, and, for every $t\in\mathcal R$, its diameter is of order $\ell_n\{\delta_{n,\est}(t,\alpha)\vee\rho_{n,\test}(r,\alpha)\}$ over $\Sigma(t,R,U)$. It therefore suffices to show that $\rho_{n,\test}(r,\alpha)\lesssim\delta_{n,\est}(t,\alpha)$ for every $t\in\mathcal R$. Under bounded privacy, $\rho_{n,\test}(r,\alpha)\asymp(n\alpha^2)^{-2r/(4r+3)}$ and $\delta_{n,\est}(t,\alpha)\asymp(n\alpha^2)^{-t/(2t+2)}$. The map $t\mapsto t/(2t+2)$ is increasing and takes the value $2r/(4r+3)$ at $t=4r/3$, so that $t/(2t+2)\leq2r/(4r+3)$ for every $t\in\mathcal R$. Since $n\alpha^2\geq1$ for $n$ large enough, it follows that
$$\rho_{n,\test}(r,\alpha)\lesssim(n\alpha^2)^{-t/(2t+2)}\asymp\delta_{n,\est}(t,\alpha),\qquad t\in\mathcal R,$$
which proves the claim.
\end{proof}

\section{Lower bounds for smoothness testing and adaptation}
\label{app:testing-lower-impossibility}

This appendix proves Theorem~\ref{thm:testing-lower} and the negative side of Theorem~\ref{thm:main-adaptation}. Throughout this appendix, $r<s$ are two regularities in $(0,S)$. The argument has two parts. First, as in the non-private theory of adaptive confidence sets \citep{lowNonparametricConfidenceIntervals1997,CaiLow2006,robinsAdaptiveNonparametricConfidence2006,bullAdaptiveConfidenceSets2012,gineMathematicalFoundationsInfiniteDimensional2016}, an honest confidence set whose diameter is small on the smooth class induces a test of smoothness (Appendix~\ref{app:reduction}). Second, no locally private test can distinguish the smooth class from rough densities that lie closer to it than the private testing rate. For this second part, we adapt the channel-dependent prior that \citet{lam-weilMinimaxOptimalGoodnessoffit2021} use for goodness-of-fit testing of a simple null. Since our null hypothesis consists of the whole smooth class $\Sigma(s,R,U)$, the alternatives we consisder must in addition be separated from every such smooth density, as in the non-private composite lower bounds of \citet{bullAdaptiveConfidenceSets2012} and \citet{carpentierTestingRegularitySmooth2015}; this is where our argument departs from the simple-null case.

Throughout, $H_1(\rho)$ denotes the set of alternatives of Section~\ref{sec:smoothness-testing}, namely the densities of $\Sigma(r,R,U)$ at $L^2$ distance at least $\rho$ from $\Sigma(s,R,U)$. Non-interactive channels randomise independently across individuals, as in Section~\ref{sec:model}, and this product structure is used in Lemma~\ref{lem:conditioned-prior-second-moment}.

\subsection{Confidence sets imply smoothness tests}
\label{app:reduction}

A set that intersects with the smooth class and has diameter at most $v_n$ cannot contain a density at distance more than $v_n$ from that class. The test below therefore rejects exactly when the set misses the smooth class or is large, and honesty controls both of its errors. We record this reduction in finite-sample form.

\begin{prop}
\label{prop:testing-reduction}
Let $C_n$ be honest over $\Sigma(r,R,U)$ at level $1-\gamma$, let $v_n>0$ be deterministic, and let
$$\Psi_n=\1\{C_n\cap\Sigma(s,R,U)=\varnothing\ \text{or}\ |C_n|_2>v_n\}.$$
Then
$$
\begin{aligned}
&\sup_{f\in\Sigma(s,R,U)}P_f^{Q^n}(\Psi_n=1)\\
&\quad\leq\gamma+\sup_{f\in\Sigma(s,R,U)}P_f^{Q^n}(|C_n|_2>v_n),
\end{aligned}
$$
and $P_f^{Q^n}(\Psi_n=0)\leq\gamma$ for every $f\in\Sigma(r,R,U)$ with $\dist_2(f,\Sigma(s,R,U))>v_n$.
\end{prop}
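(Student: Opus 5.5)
The plan is to bound the two error probabilities separately, each by splitting on the coverage event $\{f\in C_n\}$, with honesty over $\Sigma(r,R,U)$ as the only probabilistic input. Measurability of the events involving $C_n$ is assumed, as in Section~\ref{sec:model}, so the union bounds below make sense.

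\emph{Type-I error.} Fix $f\in\Sigma(s,R,U)$. Since $\Sigma(s,R,U)\subseteq\Sigma(r,R,U)$, honesty gives $P_f^{Q^n}(f\notin C_n)\leq\gamma$. On the event $\{f\in C_n\}$, the intersection $C_n\cap\Sigma(s,R,U)$ contains $f$ and is therefore non-empty. Hence
$$\{\Psi_n=1\}\subseteq\{f\notin C_n\}\cup\{|C_n|_2>v_n\}.$$
A union bound then yields $P_f^{Q^n}(\Psi_n=1)\leq\gamma+P_f^{Q^n}(|C_n|_2>v_n)$. Taking the supremum over $f\in\Sigma(s,R,U)$ of both sides gives the first inequality.

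\emph{Type-II error.} Fix $f\in\Sigma(r,R,U)$ with $\dist_2(f,\Sigma(s,R,U))>v_n$. We show that $\{\Psi_n=0\}\subseteq\{f\notin C_n\}$, from which honesty gives $P_f^{Q^n}(\Psi_n=0)\leq\gamma$. Suppose that $\Psi_n=0$ and $f\in C_n$. Then $C_n\cap\Sigma(s,R,U)$ contains some $g$, and $|C_n|_2\leq v_n$. Since both $f$ and $g$ lie in $C_n$, we get
$$\|f-g\|_2\leq|C_n|_2\leq v_n.$$
This forces $\dist_2(f,\Sigma(s,R,U))\leq v_n$, contradicting the assumption on $f$. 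The conclusion holds for every $n$, with no asymptotics.

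\emph{Where the care lies.} The argument is elementary and contains no real obstacle. The one point to watch is the strict inequality: we need $\dist_2>v_n$, which is exactly what makes $\|f-g\|_2\leq v_n$ a contradiction. The convention $|\varnothing|_2=0$ plays no role, because in the Type-II case $C_n$ contains $f$ and is non-empty. Finally, no existence of an attaining element is needed, since $g$ is taken directly from the intersection.
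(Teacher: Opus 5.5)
Your proposal is correct and follows essentially the same argument as the paper. It uses the same two event inclusions, $\{\Psi_n=1\}\subseteq\{f\notin C_n\}\cup\{|C_n|_2>v_n\}$ under the null and $\{\Psi_n=0\}\subseteq\{f\notin C_n\}$ under separated alternatives, combined with honesty over $\Sigma(r,R,U)\supseteq\Sigma(s,R,U)$ and a union bound.
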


\begin{proof}
Let $f\in\Sigma(s,R,U)$. If $f\in C_n$ and $|C_n|_2\leq v_n$, then $C_n$ meets $\Sigma(s,R,U)$ at $f$, so that $\Psi_n=0$. Hence $\{\Psi_n=1\}\subseteq\{f\notin C_n\}\cup\{|C_n|_2>v_n\}$, and the first bound follows from a union bound and honesty of $C_n$. Now let $f\in\Sigma(r,R,U)$ with $\dist_2(f,\Sigma(s,R,U))>v_n$. If $\Psi_n=0$ and $f\in C_n$, then $C_n$ contains both $f$ and some $h\in\Sigma(s,R,U)$, so that $\dist_2(f,\Sigma(s,R,U))\leq\|f-h\|_2\leq|C_n|_2\leq v_n$, which is impossible. Hence $\{\Psi_n=0\}\subseteq\{f\notin C_n\}$, and the second bound also follows from honesty.
\end{proof}

\subsection{A channel-adapted prior}
\label{app:prior}

We construct the alternatives used in the lower bound proof in Appendix \ref{app:lower-bound-proofs} as perturbations of the uniform density, which we denote by $1$, at a single wavelet resolution level $J$. Following \citet{lam-weilMinimaxOptimalGoodnessoffit2021} (whose result we adapt to a composite null and our boundary-corrected wavelet basis), the perturbation directions are adapted to the channels: directions that the channels transmit poorly receive larger amplitudes, so that the alternatives remain far from the smooth class before privatisation while their induced probability distribution over private summaries stay close to that from the uniform density. 

Fix non-interactive $\alpha$-LDP channels $Q_1,\ldots,Q_n$, and let
$$\mu_i(A)=\int_0^1Q_i(A\mid x)\,dx$$
be the law of the $i$th report under the uniform density.

\begin{lem}
\label{lem:channel-domination}
For every $i$ and every $x\in[0,1]$, one has $e^{-\alpha}\mu_i\leq Q_i(\cdot\mid x)\leq e^\alpha\mu_i$. In particular, $Q_i(\cdot\mid x)$ has a density $q_i(\cdot\mid x)$ with respect to $\mu_i$, which satisfies $e^{-\alpha}\leq q_i(\cdot\mid x)\leq e^\alpha$ $\mu_i$-almost everywhere.
\end{lem}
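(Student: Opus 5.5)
The plan is to derive both inequalities directly from the $\alpha$-LDP condition by averaging over the second argument. Fix $i$, $x\in[0,1]$ and a measurable set $A$ of outputs. The LDP condition $Q_i(A\mid x)\leq e^\alpha Q_i(A\mid x')$ holds for every $x'\in[0,1]$. We integrate it in $x'$ over $[0,1]$ with respect to Lebesgue measure, which is a probability measure on $[0,1]$. This gives $Q_i(A\mid x)\leq e^\alpha\mu_i(A)$. Measurability of $x'\mapsto Q_i(A\mid x')$ holds because $Q_i$ is a Markov kernel, so the integral defining $\mu_i(A)$ makes sense. Symmetrically, $Q_i(A\mid x')\leq e^\alpha Q_i(A\mid x)$ integrated in $x'$ yields $\mu_i(A)\leq e^\alpha Q_i(A\mid x)$, that is, $e^{-\alpha}\mu_i(A)\leq Q_i(A\mid x)$. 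Since $A$ was arbitrary, these are the two setwise inequalities of the statement.

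For the density, the upper inequality shows that $\mu_i(A)=0$ implies $Q_i(A\mid x)=0$. Hence $Q_i(\cdot\mid x)\ll\mu_i$. Both measures are finite, so the Radon--Nikodym theorem gives a density $q_i(\cdot\mid x)=dQ_i(\cdot\mid x)/d\mu_i$. To get the pointwise bounds, we apply the setwise bounds to $A=\{q_i(\cdot\mid x)>e^\alpha\}$:
$$\int_A q_i(z\mid x)\,\mu_i(dz)\leq e^\alpha\mu_i(A),$$
while the left side is strictly larger than $e^\alpha\mu_i(A)$ whenever $\mu_i(A)>0$. Hence $\mu_i(A)=0$. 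The same argument on $\{q_i(\cdot\mid x)<e^{-\alpha}\}$ gives the lower bound $\mu_i$-almost everywhere.

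No step is a real obstacle; the only point needing care is measurability. The countably generated $\sigma$-field assumed in Section~\ref{sec:model} is not needed for this lemma. It would matter only later, if one wanted a version of $q_i(z\mid x)$ jointly measurable in $(z,x)$, as used for the second-moment computations of the prior. If needed, that version can be obtained from the standard result on measurable Radon--Nikodym derivatives for kernels on countably generated spaces.
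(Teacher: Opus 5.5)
Your proposal is correct and follows the paper's proof: you integrate the two LDP inequalities over $x'\in[0,1]$ to obtain the setwise bounds, then invoke the Radon--Nikodym theorem, and you spell out the absolute continuity and the level-set argument for the almost-everywhere bounds that the paper leaves implicit. Your remark that the countably generated $\sigma$-field matters only for a jointly measurable version of $q_i$, and not for this lemma, also matches how the paper uses that assumption.
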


\begin{proof}
For every measurable set $A$, integrating $Q_i(A\mid x)\leq e^\alpha Q_i(A\mid x')$ over $x'\in[0,1]$ gives $Q_i(A\mid x)\leq e^\alpha\mu_i(A)$, and integrating $Q_i(A\mid x')\leq e^\alpha Q_i(A\mid x)$ gives $\mu_i(A)\leq e^\alpha Q_i(A\mid x)$. The claims follow from the Radon-Nikodym theorem.
\end{proof}

We use a version of $q_i$ that is jointly measurable in $(z,x)$, which exists, for instance, when the output space is countably generated (as is the case for most LDP mechanisms considered in the literature). Since $\int_A\int_0^1q_i(z\mid x)\,dx\,d\mu_i(z)=\int_0^1Q_i(A\mid x)\,dx=\mu_i(A)$ for every measurable $A$, Fubini's theorem gives \[\int_0^1q_i(z\mid x)\,dx=1\] for $\mu_i$-almost every $z$. For $u\in L^2([0,1])$, put
$$\mathcal T_iu(z)=\int_0^1q_i(z\mid x)u(x)\,dx.$$
If $1+u$ is a density, then $1+\mathcal T_iu$ is the induced density with respect to $\mu_i$ of the $i$th report under the $1+u$, so that $\mathcal T_i$ describes how the $i$th channel transmits a perturbation of the uniform density.

Let $\calV_J$ be the span of the wavelets $\psi_{Jk}$, $k\in N_J$. By (W2) and (W4), it has dimension $L=2^J$ and its elements have mean zero (they are orthogonal to the uniform density, identically equal to 1). On $\calV_J$, consider the symmetric non-negative bilinear form
$$\langle u,u'\rangle_Q=\frac1n\sum_{i=1}^n\int\mathcal T_iu\,\mathcal T_iu'\,d\mu_i,$$
which measures how much information the reports carry, on average, about the directions $u$ and $u'$. Let $u_1,\ldots,u_L$ be an orthonormal basis of $\calV_J$ (for the usual euclidean structure) that diagonalises this form, so that $\langle u_j,u_l\rangle_Q=\lambda_j\1\{j=l\}$ with $\lambda_j\geq0$, and put
\[\widetilde\lambda_j=\frac{\lambda_j}{e^{2\bar\alpha}\alpha^2}\vee\frac1L.\]
For a sign vector $\omega\in\{-1,1\}^L$, we introduce the alternatives
$$f_\omega=1+L^{-r-1}\sum_{j=1}^L\omega_j\widetilde\lambda_j^{-1/2}u_j.$$
The factor $\widetilde\lambda_j^{-1/2}$ enlarges the directions that the channels transmit poorly, the truncation at $L^{-1}$ keeps every amplitude bounded, and the factor $L^{-r-1}$ makes $f_\omega-1$ of size $L^{-r}$ (Lemma~\ref{lem:alternative-integral-regularity}). The next lemma is where local privacy enters: it limits the total information $\sum_j\lambda_j$.

\begin{lem}
\label{lem:spectral-consequences}
One has $\sum_{j=1}^L\lambda_j\leq(e^\alpha-1)^2\leq e^{2\bar \alpha}\alpha^2$. Consequently, $\lambda_j/\widetilde\lambda_j\leq e^{2\bar \alpha}\alpha^2$ for every $j$, and
$$\frac{L^2}2\leq\sum_{j=1}^L\widetilde\lambda_j^{-1}\leq L^2.$$
\end{lem}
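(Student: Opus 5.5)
The plan is to bound the trace of the form $\langle\cdot,\cdot\rangle_Q$ on $\calV_J$, using the orthonormal eigenbasis. Since $u_1,\ldots,u_L$ is orthonormal in $L^2$ and diagonalises the form,
$$\sum_{j=1}^L\lambda_j=\frac1n\sum_{i=1}^n\sum_{j=1}^L\int(\mathcal T_iu_j)^2\,d\mu_i.$$
It therefore suffices to show that, for each $i$ and $\mu_i$-almost every $z$,
$$\sum_{j=1}^L(\mathcal T_iu_j)(z)^2\leq(e^\alpha-1)^2,$$
and then integrate against the probability measure $\mu_i$ and average over $i$.

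Fix $i$ and $z$. The key observation is that $\mathcal T_iu_j(z)=\langle q_i(z\mid\cdot),u_j\rangle$. Because each $u_j$ has mean zero by (W4), we may subtract any constant from $q_i(z\mid\cdot)$ without changing these inner products. Subtracting its mean, which equals one $\mu_i$-a.e. by the Fubini identity, gives
$$\mathcal T_iu_j(z)=\langle q_i(z\mid\cdot)-1,u_j\rangle.$$
Bessel's inequality for the orthonormal family $(u_j)$ then yields
$$\sum_j(\mathcal T_iu_j)(z)^2\leq\|q_i(z\mid\cdot)-1\|_2^2=\int_0^1\{q_i(z\mid x)-1\}^2\,dx.$$

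The main step is to bound this variance. By Lemma~\ref{lem:channel-domination}, $h(x)=q_i(z\mid x)$ takes values in $[e^{-\alpha},e^\alpha]$ and has mean one. For a variable $Y$ in $[a,b]$ with mean $1$, one has $\E(Y-1)^2\leq(b-1)(1-a)$, the Bhatia--Davis inequality. This gives the bound $(e^\alpha-1)(1-e^{-\alpha})\leq(e^\alpha-1)^2$. Alternatively one can simply use $|h-1|\leq e^\alpha-1$ pointwise, since $1-e^{-\alpha}\leq e^\alpha-1$, and this already suffices. Finally, $e^\alpha-1\leq\alpha e^\alpha\leq\alpha e^{\bar\alpha}$ by the mean value theorem, which gives the second inequality $(e^\alpha-1)^2\leq e^{2\bar\alpha}\alpha^2$. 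Measurability of $z\mapsto\int(q_i-1)^2dx$ follows from the joint measurability of $q_i$.

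For the consequences, the bound $\lambda_j/\widetilde\lambda_j\leq e^{2\bar\alpha}\alpha^2$ is immediate from $\widetilde\lambda_j\geq\lambda_j/(e^{2\bar\alpha}\alpha^2)$. The upper bound $\sum_j\widetilde\lambda_j^{-1}\leq L\cdot L=L^2$ follows from $\widetilde\lambda_j\geq1/L$.

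For the lower bound, let $A=\{j:\widetilde\lambda_j>1/L\}$, so that $\widetilde\lambda_j=\lambda_j/(e^{2\bar\alpha}\alpha^2)$ on $A$. Then
$$\sum_{j\in A}\widetilde\lambda_j\leq\frac{\sum_j\lambda_j}{e^{2\bar\alpha}\alpha^2}\leq1.$$
Each $j\in A$ has $\widetilde\lambda_j>1/L$, so $|A|<L$. One could then count the indices outside $A$, each contributing $L$, but this only gives something like $L(L-|A|)$, which is weak if $|A|$ is large. Instead, I will apply Cauchy--Schwarz (or the AM--HM inequality) over all $j$:
$$L^2\leq\Bigl(\sum_j\widetilde\lambda_j\Bigr)\Bigl(\sum_j\widetilde\lambda_j^{-1}\Bigr).$$
Moreover,
$$\sum_j\widetilde\lambda_j\leq\sum_{j\in A}\widetilde\lambda_j+\sum_{j\notin A}\frac1L\leq1+1=2.$$
Hence $\sum_j\widetilde\lambda_j^{-1}\geq L^2/2$, as claimed.
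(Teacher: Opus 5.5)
Your proposal is correct and follows essentially the same route as the paper: write $\mathcal T_iu_j(z)=\langle q_i(z\mid\cdot)-1,u_j\rangle$ using the mean-zero property, apply Bessel's inequality with the pointwise bound $|q_i(z\mid\cdot)-1|\leq e^\alpha-1$ from Lemma~\ref{lem:channel-domination}, integrate against $\mu_i$, and finish with Cauchy--Schwarz and $\sum_j\widetilde\lambda_j\leq2$. The only cosmetic difference is that you bound $\sum_j\widetilde\lambda_j$ by splitting the indices according to which term attains the maximum, whereas the paper simply uses $a\vee b\leq a+b$ termwise.
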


\begin{proof}
By Lemma~\ref{lem:channel-domination} and Fubini's theorem, for $\mu_i$-almost every $z$, one has $e^{-\alpha}\leq q_i(z\mid x)\leq e^\alpha$ for almost every $x$, and hence $\|q_i(z\mid\cdot)-1\|_2\leq e^\alpha-1$. For such $z$, since each $u_j$ has mean zero, $\mathcal T_iu_j(z)=\langle q_i(z\mid\cdot)-1,u_j\rangle$, and Bessel's inequality gives
$$\sum_{j=1}^L\{\mathcal T_iu_j(z)\}^2\leq\|q_i(z\mid\cdot)-1\|_2^2\leq(e^\alpha-1)^2.$$
Integrating against the probability measure $\mu_i$ and averaging over $i$ yields 
\begin{align*}
\sum_j\lambda_j&=\sum_j\langle u_j,u_j\rangle_Q\\
&=\frac1n\sum_{i=1}^n\int\sum_j(\mathcal T_iu_j)^2\,d\mu_i\\
&\leq(e^\alpha-1)^2,
\end{align*}
and $e^\alpha-1\leq\alpha e^{\alpha}\leq\alpha e^{\bar\alpha}$. Next, $\lambda_j/\widetilde\lambda_j\leq e^{2\bar \alpha}\alpha^2$ because $\widetilde\lambda_j\geq\lambda_j/(e^{2\bar\alpha}\alpha^2)$, and $\widetilde\lambda_j^{-1}\leq L$ because $\widetilde\lambda_j\geq L^{-1}$, which gives the upper bound on $\sum_j\widetilde\lambda_j^{-1}$. Finally, $\sum_j\widetilde\lambda_j\leq\sum_j\left(\lambda_j/(e^{2\bar\alpha}\alpha^2)+L^{-1}\right)\leq2$, so that, by the Cauchy-Schwarz inequality,
$$L^2\leq\Bigl(\sum_{j=1}^L\widetilde\lambda_j\Bigr)\Bigl(\sum_{j=1}^L\widetilde\lambda_j^{-1}\Bigr)\leq2\sum_{j=1}^L\widetilde\lambda_j^{-1}.$$
\end{proof}

\subsection{Validity of the alternatives}
\label{app:validity}

The alternatives we introduced above must be densities in $\Sigma(r,R,U)$ that are far from the whole smooth class. While integration to 1, the Besov constraint and the separation hold for every sign vector, positivity and the bound by $U$ hold on a set of sign vectors of high probability, on which the prior we consider will be conditioned. In contrast with the Haar wavelets of \citet{lam-weilMinimaxOptimalGoodnessoffit2021}, our wavelets need not have disjoint supports, and the sup-norm bound of Lemma~\ref{lem:wavelet-basic-bounds} replaces the disjointness argument they used.

\begin{lem}
\label{lem:alternative-integral-regularity}
For every $\omega\in\{-1,1\}^L$, the function $f_\omega$ integrates to one, and its wavelet coefficients coincide with those of the uniform density except at level $J$, where
$$\frac{L^{-r}}{\sqrt2}\leq\|\beta_J(f_\omega)\|_{\ell^2}=\|f_\omega-1\|_2\leq L^{-r}.$$
Consequently, $\|f_\omega\|_{B^r_{2,\infty}}\leq R$ and
$$\dist_2(f_\omega,\Sigma(s,R,U))\geq\frac{L^{-r}}{\sqrt2}-RL^{-s}.$$
\end{lem}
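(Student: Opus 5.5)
The proof rests on the fact that $f_\omega-1$ lies in $\calV_J$, the span of the level-$J$ wavelets. First, each $u_j$ belongs to $\calV_J$, so each $u_j$ is a finite combination of the $\psi_{Jk}$. By (W4), $\int_0^1\psi_{Jk}=0$, hence $\int_0^1 u_j=0$ and $\int_0^1 f_\omega=1$.

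Next, $f_\omega-1\in\calV_J$ and (W1) orthonormality imply that every scaling and wavelet coefficient of $f_\omega$ equals that of the constant function $1$, except at level $J$. The expansion of $1$ has no level-$J$ component, since $1$ lies in the span of the scaling functions. Therefore $\|\beta_J(f_\omega)\|_{\ell^2}=\|f_\omega-1\|_2$. Because $(u_j)$ is orthonormal,
\[
\|f_\omega-1\|_2^2=L^{-2r-2}\sum_{j=1}^L\widetilde\lambda_j^{-1}.
\]
Lemma~\ref{lem:spectral-consequences} gives $L^2/2\le\sum_j\widetilde\lambda_j^{-1}\le L^2$, which yields the two-sided bound $L^{-r}/\sqrt2\le\|f_\omega-1\|_2\le L^{-r}$.

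For the Besov norm, the scaling part of $f_\omega$ coincides with that of $1$, whose $\ell^2$ norm is $\|1\|_2=1\le R$. All detail levels other than $J$ vanish. At level $J$, since $L=2^J$,
\[
2^{Jr}\|\beta_J(f_\omega)\|_{\ell^2}\le 2^{Jr}L^{-r}=1\le R.
\]
Hence $\|f_\omega\|_{B^r_{2,\infty}}\le R$.

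For the separation, take any $g\in\Sigma(s,R,U)$. Parseval restricted to level $J$ gives
\[
\|f_\omega-g\|_2\ge\|\beta_J(f_\omega)-\beta_J(g)\|_{\ell^2}\ge\|\beta_J(f_\omega)\|_{\ell^2}-\|\beta_J(g)\|_{\ell^2}.
\]
By Lemma~\ref{lem:besov-tail}, $\|\beta_J(g)\|_{\ell^2}\le R2^{-Js}=RL^{-s}$. Combining this with the lower bound on $\|\beta_J(f_\omega)\|_{\ell^2}$ and taking the infimum over $g$ gives $\dist_2(f_\omega,\Sigma(s,R,U))\ge L^{-r}/\sqrt2-RL^{-s}$.

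There is no real obstacle. The only point needing care is recognising that all level-$J$ coefficients of $1$ vanish, which follows from (W4).
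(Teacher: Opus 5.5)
Your proposal is correct and follows the paper's proof essentially step for step. The steps match: mean zero of $f_\omega-1\in\calV_J$ via (W4); the identity $\|f_\omega-1\|_2^2=L^{-2r-2}\sum_j\widetilde\lambda_j^{-1}$ combined with Lemma~\ref{lem:spectral-consequences}; the bound $\|f_\omega\|_{B^r_{2,\infty}}=\max\{1,L^r\|f_\omega-1\|_2\}\leq1\leq R$; and the separation via the level-$J$ reverse triangle inequality with $\|\beta_J(g)\|_{\ell^2}\leq RL^{-s}$.
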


\begin{proof}
The function $f_\omega-1$ lies in $\calV_J$, so it has mean zero and its only non-zero wavelet coefficients are at level $J$. By orthonormality of the $u_j$, $\|f_\omega-1\|_2^2=L^{-2r-2}\sum_j\widetilde\lambda_j^{-1}$, which gives the displayed bounds by Lemma~\ref{lem:spectral-consequences}. The coefficients of $f_\omega$ at level $-1$ are those of the uniform density, whose Euclidean norm is one by (W1) and (W4). Since $2^{Jr}=L^r$, it follows that $\|f_\omega\|_{B^r_{2,\infty}}=\max\{1,L^r\|f_\omega-1\|_2\}\leq1\leq R$. Finally, for $h\in\Sigma(s,R,U)$, one has $\|\beta_J(h)\|_{\ell^2}\leq R2^{-Js}=RL^{-s}$, so that, by (W1) and the reverse triangle inequality,
$$\|f_\omega-h\|_2\geq\|\beta_J(f_\omega)-\beta_J(h)\|_{\ell^2}\geq\frac{L^{-r}}{\sqrt2}-RL^{-s},$$
and the bound on the distance follows by taking the infimum over $h$.
\end{proof}

Let $\pi$ be the prior uniform distribution on $\{-1,1\}^L$ and, for $p\in(0,1)$, let $\mathcal A_{L,p}$ be the set of sign vectors $\omega$ such that
$$\max_{k\in N_J}|\beta_{Jk}(f_\omega)|\leq L^{-r-1/2}\{2\log(2L/p)\}^{1/2}.$$

\begin{lem}
\label{lem:probabilistic-positivity}
For every $p\in(0,1)$, one has $\pi(\mathcal A_{L,p})\geq1-p$, and every $\omega\in\mathcal A_{L,p}$ satisfies
$$\|f_\omega-1\|_\infty\leq C_\psi L^{-r}\{2\log(2L/p)\}^{1/2}.$$
In particular, $f_\omega\in\Sigma(r,R,U)$ for every $\omega\in\mathcal A_{L,p}$ as soon as the right-hand side is at most $1\wedge(U-1)$.
\end{lem}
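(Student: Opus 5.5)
The plan has three parts: a sub-Gaussian bound on each level-$J$ wavelet coefficient of $f_\omega$ under the uniform prior, a union bound over the $L$ coefficients, and then (W3) to pass from coefficient bounds to a sup-norm bound. Fix $k\in N_J$ and write $\beta_{Jk}(f_\omega)=L^{-r-1}\sum_{j=1}^L\omega_j\widetilde\lambda_j^{-1/2}\langle u_j,\psi_{Jk}\rangle$, using that $f_\omega-1\in\calV_J$. Under $\pi$ the $\omega_j$ are independent Rademacher signs, so this is a weighted Rademacher sum. By Hoeffding's inequality its tail is at most $2\exp\{-x^2/(2\sigma_k^2)\}$, where
$$\sigma_k^2=L^{-2r-2}\sum_{j=1}^L\widetilde\lambda_j^{-1}\langle u_j,\psi_{Jk}\rangle^2 .$$

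To bound $\sigma_k^2$, note that $\widetilde\lambda_j\ge L^{-1}$ gives $\widetilde\lambda_j^{-1}\le L$. Since $(u_j)$ is an orthonormal basis of $\calV_J$ and $\psi_{Jk}\in\calV_J$ is a unit vector, Parseval gives $\sum_j\langle u_j,\psi_{Jk}\rangle^2=1$. Hence $\sigma_k^2\le L^{-2r-1}$. Taking $x=L^{-r-1/2}\{2\log(2L/p)\}^{1/2}$ makes each tail probability at most $p/L$. A union bound over the $|N_J|=L$ indices (by (W2)) then yields $\pi(\mathcal A_{L,p}^c)\le p$.

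For the sup-norm bound, $f_\omega-1=\sum_{k\in N_J}\beta_{Jk}(f_\omega)\psi_{Jk}$. Lemma~\ref{lem:wavelet-basic-bounds} gives $\|f_\omega-1\|_\infty\le C_\psi2^{J/2}\max_k|\beta_{Jk}(f_\omega)|$. On $\mathcal A_{L,p}$, with $2^{J/2}=L^{1/2}$, this is at most $C_\psi L^{-r}\{2\log(2L/p)\}^{1/2}$, which is the claimed bound.

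It remains to check membership in $\Sigma(r,R,U)$ when this bound $\epsilon$ satisfies $\epsilon\le1\wedge(U-1)$. In that case $f_\omega\ge1-\epsilon\ge0$ and $f_\omega\le1+\epsilon\le U$. Combined with Lemma~\ref{lem:alternative-integral-regularity} (integral one and Besov norm at most $R$), this gives $f_\omega\in\Sigma(r,R,U)$.

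There is no real obstacle here. The only point needing care is the variance bound: one must use the truncation $\widetilde\lambda_j\ge 1/L$ together with Parseval in the basis $(u_j)$, rather than any disjointness of wavelet supports.
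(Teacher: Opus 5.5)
Your proposal is correct and follows essentially the same route as the paper's proof: a Hoeffding bound on each weighted Rademacher sum $\beta_{Jk}(f_\omega)$, with the variance proxy controlled by $\widetilde\lambda_j^{-1}\leq L$ and Parseval in $(u_j)$. It then uses a union bound over the $L$ indices, the sup-norm bound of Lemma~\ref{lem:wavelet-basic-bounds} with $2^{J/2}=L^{1/2}$, and Lemma~\ref{lem:alternative-integral-regularity} for membership in $\Sigma(r,R,U)$.
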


\begin{proof}
Since $f_\omega-1$ lies in $\calV_J$, one has $f_\omega-1=\sum_{k\in N_J}\beta_{Jk}(f_\omega)\psi_{Jk}$, where, for every $k\in N_J$,
$$\beta_{Jk}(f_\omega)=L^{-r-1}\sum_{j=1}^L\omega_j\widetilde\lambda_j^{-1/2}\langle u_j,\psi_{Jk}\rangle.$$
Under $\pi$, this is a sum of the independent Rademacher variables $\omega_1,\ldots,\omega_L$ with deterministic weights, whose squares sum to
$$
\begin{aligned}
L^{-2r-2}\sum_{j=1}^L\widetilde\lambda_j^{-1}\langle u_j,\psi_{Jk}\rangle^2
&\leq L^{-2r-1}\sum_{j=1}^L\langle u_j,\psi_{Jk}\rangle^2\\
&=L^{-2r-1},
\end{aligned}
$$
because $\widetilde\lambda_j^{-1}\leq L$ and, by Parseval's identity in $\calV_J$, $\sum_j\langle u_j,\psi_{Jk}\rangle^2=\|\psi_{Jk}\|_2^2=1$. For the level $L^{-r-1/2}\{2\log(2L/p)\}^{1/2}$ that defines $\mathcal A_{L,p}$, whose square is $2L^{-2r-1}\log(2L/p)$, Hoeffding's inequality gives
$$
\begin{aligned}
&\pi\Bigl[|\beta_{Jk}(f_\omega)|>L^{-r-1/2}\{2\log(2L/p)\}^{1/2}\Bigr]\\
&\quad\leq2\exp\Bigl\{-\frac{2L^{-2r-1}\log(2L/p)}{2L^{-2r-1}}\Bigr\}=\frac pL.
\end{aligned}
$$
The complement $\mathcal A_{L,p}^c$ is the union of these events over the $L$ indices $k\in N_J$, so that a union bound gives $\pi(\mathcal A_{L,p}^c)\leq p$.

On $\mathcal A_{L,p}$, Lemma~\ref{lem:wavelet-basic-bounds} and $2^{J/2}=L^{1/2}$ give
$$
\begin{aligned}
\|f_\omega-1\|_\infty
&\leq C_\psi2^{J/2}\max_{k\in N_J}|\beta_{Jk}(f_\omega)|\\
&\leq C_\psi L^{1/2}\cdot L^{-r-1/2}\{2\log(2L/p)\}^{1/2}\\
&=C_\psi L^{-r}\{2\log(2L/p)\}^{1/2}.
\end{aligned}
$$
If this bound is at most $1\wedge(U-1)$, then $0\leq f_\omega\leq U$, and Lemma~\ref{lem:alternative-integral-regularity} completes the proof.
\end{proof}

\subsection{Second moment of the private mixture}
\label{app:second-moment}

We condition the prior distribution on the event of Lemma~\ref{lem:probabilistic-positivity}, so that we obtain a mixture of densities, supported on valid alternatives. \citet{lam-weilMinimaxOptimalGoodnessoffit2021} instead keep the full prior and allow part of its mass outside the class of alternatives. Conditioning only costs the factor $\pi(\mathcal A)^{-2}$ in the next lemma.

\begin{lem}
\label{lem:conditioned-prior-second-moment}
Let $\mathcal A\subseteq\{-1,1\}^L$ satisfy $\pi(\mathcal A)\geq1-p$, and suppose that $f_\omega$ is a density for every $\omega\in\mathcal A$. Let $\Pi$ be the law of $f_\omega$ when $\omega$ is drawn from $\pi(\cdot\mid\mathcal A)$, and let $\mathcal L_n=dP_\Pi^{Q^n}/dP_1^{Q^n}$. Then
$$\E_1^{Q^n}\mathcal L_n^2\leq(1-p)^{-2}\exp\Bigl(\frac{n^2e^{4\bar\alpha}\alpha^4}{2L^{4r+3}}\Bigr).$$
\end{lem}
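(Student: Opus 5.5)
The plan is the standard second-moment computation for mixtures under product channels, with the conditioning handled by a crude bound. Write $\pi_{\mathcal A}=\pi(\cdot\mid\mathcal A)$. Since the channels are non-interactive and $P_1^{Q^n}=\bigotimes_i\mu_i$, Lemma~\ref{lem:channel-domination} gives the density of the $i$th report under $f_\omega$ with respect to $\mu_i$ as $1+\mathcal T_i(f_\omega-1)$. Hence
\[
\mathcal L_n(z)=\int\prod_{i=1}^n\{1+\mathcal T_i(f_\omega-1)(z_i)\}\,d\pi_{\mathcal A}(\omega).
\]
Squaring, taking expectations under $P_1^{Q^n}$ and applying Fubini, I would write $\E_1^{Q^n}\mathcal L_n^2$ as a double integral over two independent sign vectors $\omega,\omega'$ drawn from $\pi_{\mathcal A}$ of $\prod_i\int\{1+\mathcal T_iu_\omega\}\{1+\mathcal T_iu_{\omega'}\}\,d\mu_i$, where $u_\omega=f_\omega-1\in\calV_J$.

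The cross terms vanish because $\int\mathcal T_iu\,d\mu_i=\int_0^1u=0$, using $\int q_i(z\mid x)\,d\mu_i(z)=1$ and Fubini. Each factor therefore equals $1+\int\mathcal T_iu_\omega\mathcal T_iu_{\omega'}\,d\mu_i$. Using $1+x\le e^x$ and summing over $i$ gives the bound $\exp\{n\langle u_\omega,u_{\omega'}\rangle_Q\}$. By the diagonalisation,
\[
n\langle u_\omega,u_{\omega'}\rangle_Q=nL^{-2r-2}\sum_j\omega_j\omega'_j\lambda_j/\widetilde\lambda_j.
\]
The integrand is non-negative, so the conditional expectation is at most $\pi(\mathcal A)^{-2}\le(1-p)^{-2}$ times the expectation under the unconditioned product prior $\pi\otimes\pi$. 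Under that prior, $\omega_j\omega'_j$ are i.i.d. Rademacher variables, and $\E e^{t\epsilon}=\cosh t\le e^{t^2/2}$. This yields
\[
\exp\Bigl\{\tfrac{n^2}{2}L^{-4r-4}\sum_j(\lambda_j/\widetilde\lambda_j)^2\Bigr\}.
\]

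The main step is to bound $\sum_j(\lambda_j/\widetilde\lambda_j)^2\le e^{4\bar\alpha}\alpha^4L$. By Lemma~\ref{lem:spectral-consequences}, each ratio satisfies $\lambda_j/\widetilde\lambda_j\le e^{2\bar\alpha}\alpha^2$, and there are $L$ terms, so this bound is immediate. Substituting gives the exponent $n^2e^{4\bar\alpha}\alpha^4/(2L^{4r+3})$, as claimed.

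The only delicate points are the measurability and Fubini exchanges, which rely on the jointly measurable version of $q_i$ and the bounds $e^{-\alpha}\le q_i\le e^\alpha$, and the fact that the conditioning can be removed only because the integrand is non-negative. One could use the finer bound $\sum_j\lambda_j\le e^{2\bar\alpha}\alpha^2$ to control the sum of squares more sharply, but the crude bound already matches the stated exponent.
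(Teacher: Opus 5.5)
Your proposal is correct and follows the paper's proof essentially step for step. The shared steps are: the product likelihood-ratio representation; the vanishing cross terms via $\int\mathcal T_iu\,d\mu_i=0$; the bound $1+x\le e^x$ giving $\exp\{n\langle u_\omega,u_{\omega'}\rangle_Q\}$; removal of the conditioning at cost $\pi(\mathcal A)^{-2}$ by non-negativity; and $\cosh y\le e^{y^2/2}$ combined with $\lambda_j/\widetilde\lambda_j\le e^{2\bar\alpha}\alpha^2$ from Lemma~\ref{lem:spectral-consequences}. The only implicit point, which the paper also leaves unstated, is that $\prod_i(1+x_i)\le e^{\sum_i x_i}$ needs each factor $1+x_i=\int\mathcal L_{\omega,i}\mathcal L_{\omega',i}\,d\mu_i$ to be non-negative; this holds because you only integrate over $\omega,\omega'\in\mathcal A$.
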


\begin{proof}
For $\omega\in\{-1,1\}^L$ and each $i$, put
$$\mathcal L_{\omega,i}=1+L^{-r-1}\sum_{j=1}^L\omega_j\widetilde\lambda_j^{-1/2}\mathcal T_iu_j,$$
which, for $\omega\in\mathcal A$, is the likelihood ratio of the $i$th private release under $f_\omega$ with respect to its law $\mu_i$ under the uniform density. Under $\mu_i$, each $\mathcal T_iu_j$ is centred: by Fubini's theorem, which applies because $|q_i(z\mid x)u_j(x)|\leq e^\alpha|u_j(x)|$,
$$
\begin{aligned}
\int\mathcal T_iu_j\,d\mu_i
&=\int_0^1u_j(x)\Bigl\{\int q_i(z\mid x)\,d\mu_i(z)\Bigr\}dx\\
&=\int_0^1u_j(x)\,dx=0,
\end{aligned}
$$
since $u_j$ has mean zero by (W4). Consequently,
$$
\begin{aligned}
\int\mathcal L_{\omega,i}\mathcal L_{\omega',i}\,d\mu_i
&=1+L^{-2r-2}\sum_{j,l=1}^L\omega_j\omega'_l\,\widetilde\lambda_j^{-1/2}\widetilde\lambda_l^{-1/2}\\
&\qquad\times\int\mathcal T_iu_j\,\mathcal T_iu_l\,d\mu_i,
\end{aligned}
$$
and, by definition of the $u_j$, $n^{-1}\sum_i\int\mathcal T_iu_j\,\mathcal T_iu_l\,d\mu_i=\lambda_j\1\{j=l\}$. Hence, by the inequality $1+y\leq e^y$, for all $\omega,\omega'\in\{-1,1\}^L$,
$$\prod_{i=1}^n\int\mathcal L_{\omega,i}\mathcal L_{\omega',i}\,d\mu_i\leq\exp\Bigl(\frac{n}{L^{2r+2}}\sum_{j=1}^L\frac{\lambda_j}{\widetilde\lambda_j}\omega_j\omega'_j\Bigr).$$
The private releases are independent, and
$$\mathcal L_n=\frac{1}{\pi(\mathcal A)}\E_\pi\Bigl\{\1_{\mathcal A}(\omega)\prod_{i=1}^n\mathcal L_{\omega,i}(Z_i)\Bigr\},$$
where $\E_\pi$ integrates over $\omega$ only. Since $\mathcal L_{\omega,i}\geq0$ for $\omega\in\mathcal A$, Fubini's theorem and the last display give
$$\E_1^{Q^n}\mathcal L_n^2\leq\frac{1}{\pi(\mathcal A)^2}\E_{\pi\otimes\pi}\exp\Bigl(\frac{n}{L^{2r+2}}\sum_{j=1}^L\frac{\lambda_j}{\widetilde\lambda_j}\omega_j\omega'_j\Bigr),$$
where the indicators of $\mathcal A$ have been dropped from a positive integrand. Put $y_j=nL^{-2r-2}\lambda_j/\widetilde\lambda_j$, so that $0\leq y_j\leq nL^{-2r-2}e^{2\bar\alpha}\alpha^2$ by Lemma~\ref{lem:spectral-consequences}. Under $\pi\otimes\pi$, the products $\omega_1\omega'_1,\ldots,\omega_L\omega'_L$ are independent Rademacher random variables. Therefore
$$
\begin{aligned}
\E_{\pi\otimes\pi}\exp\Bigl(\sum_{j=1}^Ly_j\omega_j\omega'_j\Bigr)
&=\prod_{j=1}^L\E_{\pi\otimes\pi}\exp(y_j\omega_j\omega'_j)\\
&=\prod_{j=1}^L\frac{e^{y_j}+e^{-y_j}}{2}\\
&=\prod_{j=1}^L\cosh(y_j).
\end{aligned}
$$
Since $(2k)!\geq2^kk!$ for every $k\geq0$, one has $\cosh y=\sum_{k\geq0}y^{2k}/(2k)!\leq\sum_{k\geq0}(y^2/2)^k/k!=e^{y^2/2}$, and therefore
$$
\begin{aligned}
\prod_{j=1}^L\cosh(y_j)
&\leq\exp\Bigl(\frac12\sum_{j=1}^Ly_j^2\Bigr)\\
&\leq\exp\Bigl(\frac L2\cdot\frac{n^2e^{4\bar\alpha}\alpha^4}{L^{4r+4}}\Bigr)=\exp\Bigl(\frac{n^2e^{4\bar\alpha}\alpha^4}{2L^{4r+3}}\Bigr).
\end{aligned}
$$
Combined with the previous display and $\pi(\mathcal A)\geq1-p$, this proves the lemma.
\end{proof}

\subsection{Proofs of the lower bounds}
\label{app:lower-bound-proofs}

At resolution $L$, the alternatives have size of order $L^{-r}$, and Lemma~\ref{lem:conditioned-prior-second-moment} shows that their mixture cannot be told apart from the uniform density as soon as $n^2\alpha^4L^{-4r-3}\to0$. The critical resolution is therefore $L\asymp(n\alpha^2)^{2/(4r+3)}$, at which the size of the alternatives is the private testing rate
$$\rho_n^\ast=(n\alpha^2)^{-2r/(4r+3)}.$$
To obtain the lower bound at every separation of smaller order, we take the resolution slightly finer than critical.

\begin{prop}
\label{prop:private-composite-lower}
Let $\rho_n=o(\rho_n^\ast)$. For every sequence of non-interactive $\alpha$-LDP channels and every sequence of tests $\Psi_n$,
$$
\begin{aligned}
\liminf_{n\to\infty}\Bigl[&\sup_{f\in\Sigma(s,R,U)}P_f^{Q^n}(\Psi_n=1)\\
&+\sup_{f\in H_1(\rho_n)}P_f^{Q^n}(\Psi_n=0)\Bigr]\geq1.
\end{aligned}
$$
\end{prop}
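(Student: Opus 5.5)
We follow the standard Le~Cam two-point (prior versus point) route. The uniform density $1$ lies in $\Sigma(s,R,U)$, so for any test and any prior $\Pi$ supported on $H_1(\rho_n)$,
$$\sup_{f\in\Sigma(s,R,U)}P_f^{Q^n}(\Psi_n=1)+\sup_{f\in H_1(\rho_n)}P_f^{Q^n}(\Psi_n=0)\geq P_1^{Q^n}(\Psi_n=1)+P_\Pi^{Q^n}(\Psi_n=0)\geq1-\|P_1^{Q^n}-P_\Pi^{Q^n}\|_{\TV}.$$
It therefore suffices to build, for each $n$ and each choice of channels, a prior $\Pi_n$ on $H_1(\rho_n)$ with $\|P_1^{Q^n}-P_{\Pi_n}^{Q^n}\|_{\TV}\to0$. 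We bound the total variation through the second moment: $2\|P_1^{Q^n}-P_\Pi^{Q^n}\|_{\TV}\leq(\E_1^{Q^n}\mathcal L_n^2-1)^{1/2}$. The prior $\Pi_n$ will be the conditioned prior of Lemma~\ref{lem:conditioned-prior-second-moment}, taken with $\mathcal A=\mathcal A_{L,p}$ and a level $L=2^J$ to be chosen.

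\textbf{Choice of resolution.} Since $\rho_n=o(\rho_n^\ast)$, we can pick $\epsilon_n\to0$ slowly, say $\epsilon_n=\max\{(\rho_n/\rho_n^\ast)^{1/2},(\log(n\alpha^2))^{-1}\}$, and set $L=2^J$ with
$$2^J\asymp(n\alpha^2)^{2/(4r+3)}\epsilon_n^{-1/(2r)}.$$
This makes $L$ a slowly growing multiple of the critical resolution. We then verify the following four conditions with $p=p_n\to0$, for instance $p_n=\epsilon_n$.

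\begin{enumerate}
\item \emph{Indistinguishability.} We have $n^2\alpha^4L^{-4r-3}\asymp\epsilon_n^{(4r+3)/(2r)}\to0$. Lemma~\ref{lem:conditioned-prior-second-moment} then gives $\E_1^{Q^n}\mathcal L_n^2\leq(1-p_n)^{-2}e^{o(1)}\to1$.
\item \emph{Validity.} Since $L$ grows polynomially in $n\alpha^2$ and $\log(1/p_n)$ only logarithmically, the quantity $C_\psi L^{-r}\{2\log(2L/p_n)\}^{1/2}$ tends to $0$. It is therefore below $1\wedge(U-1)$ for large $n$, and Lemma~\ref{lem:probabilistic-positivity} shows that every $f_\omega$ with $\omega\in\mathcal A_{L,p_n}$ is a density in $\Sigma(r,R,U)$.
\item \emph{Separation from the smooth class.} By Lemma~\ref{lem:alternative-integral-regularity}, $\dist_2(f_\omega,\Sigma(s,R,U))\geq L^{-r}/\sqrt2-RL^{-s}\geq L^{-r}/2$ for large $L$, because $s>r$.
\item \emph{Comparison with $\rho_n$.} We have $L^{-r}\asymp\rho_n^\ast\epsilon_n^{1/2}$, and $\epsilon_n^{1/2}\geq(\rho_n/\rho_n^\ast)^{1/4}$. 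Hence $L^{-r}/2\gtrsim\rho_n^{1/4}(\rho_n^\ast)^{3/4}\gg\rho_n$, since $\rho_n/\rho_n^\ast\to0$. So $\Pi_n$ is supported in $H_1(\rho_n)$ for $n$ large.
\end{enumerate}

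The requirement on $\epsilon_n$ is only that $L^{-r}\gg\rho_n$ while $n^2\alpha^4L^{-4r-3}\to0$. Both hold because $\epsilon_n\to0$ and $\rho_n/(\rho_n^\ast\epsilon_n^{1/2})\to0$, which is why we take $\epsilon_n$ as a geometric mean-type quantity; any sequence satisfying these two conditions works. The main technical point is the integer constraint $L=2^J$, which changes $L$ only by a factor of at most $2$ and affects only constants. We also need $J\geq j_0$, which holds once $n\alpha^2\to\infty$.

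Combining the steps, $\|P_1^{Q^n}-P_{\Pi_n}^{Q^n}\|_{\TV}\to0$ and the sum of errors is at least $1-o(1)$, which proves the $\liminf$ bound. Proposition~\ref{thm:testing-lower} follows, because under bounded privacy $\rho_{n,\test}(r,\alpha)\asymp\rho_n^\ast$. The hard analytic input, namely the spectral bound on the channel information and the second-moment computation, is already contained in Lemmas~\ref{lem:spectral-consequences} and~\ref{lem:conditioned-prior-second-moment}. The remaining obstacle is the bookkeeping above, which must make validity, separation and indistinguishability hold simultaneously along a single sequence $(L,p_n)$, uniformly over the channel sequence. Uniformity holds because none of the lemmas' constants depend on the channels.
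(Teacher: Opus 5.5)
Your proposal is correct and follows essentially the same route as the paper. Both reduce via Le~Cam to the uniform density against the conditioned channel-adapted prior, bound total variation through the second moment of Lemma~\ref{lem:conditioned-prior-second-moment}, and take a resolution slightly finer than critical so that validity, separation and indistinguishability hold together. The differences are cosmetic. The paper takes $L_n^{-r}\asymp(\rho_n\rho_n^\ast)^{1/2}$ and $p_n=L_n^{-2}$, and replaces $\rho_n$ by $\rho_n\vee(\rho_n^\ast)^2$ to make it positive. In your version, the logarithmic floor in $\epsilon_n$ does that job and also keeps $\log(1/p_n)$ under control when $p_n=\epsilon_n$.
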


\begin{proof}
Replacing $\rho_n$ by $\rho_n\vee(\rho_n^\ast)^2$ shrinks $H_1(\rho_n)$, and therefore strengthens the claim, while preserving the assumption, because $\rho_n^\ast\to0$. We may thus assume that $\rho_n>0$. Let $L_n=2^{J_n}$ be the smallest power of two that is at least $(n\alpha^2)^{2/(4r+3)}(\rho_n^\ast/\rho_n)^{1/(2r)}$. Then $L_n\to\infty$, and
$$L_n^{-r}>2^{-r}(n\alpha^2)^{-2r/(4r+3)}\Bigl(\frac{\rho_n}{\rho_n^\ast}\Bigr)^{1/2}=2^{-r}\rho_n^\ast\Bigl(\frac{\rho_n}{\rho_n^\ast}\Bigr)^{1/2}.$$
We deduce that
$$
\begin{aligned}
\frac{\rho_n}{L_n^{-r}}&\leq2^r\Bigl(\frac{\rho_n}{\rho_n^\ast}\Bigr)^{1/2}\to0,\\
\frac{n^2\alpha^4}{L_n^{4r+3}}&\leq\Bigl(\frac{\rho_n}{\rho_n^\ast}\Bigr)^{(4r+3)/(2r)}\to0.
\end{aligned}
$$
Apply the construction of Appendix~\ref{app:prior} at level $J_n$, with the given channels, and let $\mathcal A_n=\mathcal A_{L_n,p_n}$ with $p_n=L_n^{-2}$. By Lemma~\ref{lem:probabilistic-positivity}, $\pi(\mathcal A_n)\geq1-p_n$. Moreover, the bound on $\|f_\omega-1\|_\infty$ in that lemma is of order $L_n^{-r}(\log L_n)^{1/2}$, which tends to zero, so that $f_\omega\in\Sigma(r,R,U)$ for every $\omega\in\mathcal A_n$ when $n$ is large enough. By Lemma~\ref{lem:alternative-integral-regularity}, the distance from each such $f_\omega$ to $\Sigma(s,R,U)$ is at least $L_n^{-r}/\sqrt2-RL_n^{-s}$, which exceeds $\rho_n$ for $n$ large enough, because $s>r$ and $\rho_n=o(L_n^{-r})$. The conditioned prior $\Pi_n$ of Lemma~\ref{lem:conditioned-prior-second-moment} is therefore supported on $H_1(\rho_n)$, and that lemma gives
$$\E_1^{Q^n}\mathcal L_n^2\leq(1-L_n^{-2})^{-2}\exp\Bigl(\frac{n^2\alpha^4}{2L_n^{4r+3}}\Bigr)\to1.$$
Since $\E_1^{Q^n}\mathcal L_n=1$, it follows that
$$
\begin{aligned}
\|P_{\Pi_n}^{Q^n}-P_1^{Q^n}\|_{\TV}&=\frac12\E_1^{Q^n}|\mathcal L_n-1|\\
&\leq\frac12(\E_1^{Q^n}\mathcal L_n^2-1)^{1/2}\to0.
\end{aligned}
$$
Finally, let $\Psi_n$ be any test. Applying the definition of the total variation distance to the event $\{\Psi_n=1\}$ gives
$$
\begin{aligned}
P_{\Pi_n}^{Q^n}(\Psi_n=0)&=1-P_{\Pi_n}^{Q^n}(\Psi_n=1)\\
&\geq1-P_1^{Q^n}(\Psi_n=1)\\
&\quad-\|P_{\Pi_n}^{Q^n}-P_1^{Q^n}\|_{\TV},
\end{aligned}
$$
that is, $P_1^{Q^n}(\Psi_n=1)+P_{\Pi_n}^{Q^n}(\Psi_n=0)\geq1-\|P_{\Pi_n}^{Q^n}-P_1^{Q^n}\|_{\TV}$. We now bound the two terms on the left by the two suprema in the claim. Since the uniform density belongs to $\Sigma(s,R,U)$,
$$P_1^{Q^n}(\Psi_n=1)\leq\sup_{f\in\Sigma(s,R,U)}P_f^{Q^n}(\Psi_n=1).$$
Since $P_{\Pi_n}^{Q^n}=\int P_f^{Q^n}\,d\Pi_n(f)$ and, for $n$ large enough, $\Pi_n$ is supported on $H_1(\rho_n)$,
$$
\begin{aligned}
P_{\Pi_n}^{Q^n}(\Psi_n=0)&=\int P_f^{Q^n}(\Psi_n=0)\,d\Pi_n(f)\\
&\leq\sup_{f\in H_1(\rho_n)}P_f^{Q^n}(\Psi_n=0).
\end{aligned}
$$
Hence, for $n$ large enough, the bracket in the claim is at least $1-\|P_{\Pi_n}^{Q^n}-P_1^{Q^n}\|_{\TV}$. Since the total variation distance tends to zero, the limit inferior of the bracket is at least one.
\end{proof}

\begin{proof}[Proof of Theorem~\ref{thm:testing-lower}]
Under bounded privacy, the private term of $\rho_{n,\test}(r,\alpha)$ dominates (Section~\ref{sec:model}), so that $\rho_{n,\test}(r,\alpha)\asymp(n\alpha^2)^{-2r/(4r+3)}=\rho_n^\ast$. The assumption $\rho_n=o\{\rho_{n,\test}(r,\alpha)\}$ is therefore equivalent to $\rho_n=o(\rho_n^\ast)$, and Proposition~\ref{prop:private-composite-lower} proves the theorem.
\end{proof}

We now deduce the negative side of Theorem~\ref{thm:main-adaptation}. If an honest set had a smooth-class diameter of smaller order than the private testing rate, Proposition~\ref{prop:testing-reduction} would turn it into a test whose total error stays below one at a separation where Theorem~\ref{thm:testing-lower} forbids it.

\begin{proof}[Negative side of Theorem~\ref{thm:main-adaptation}]
Suppose that $\mathcal R$ contains some $s>4r/3$ and that, for some $\gamma<1/2$, a sequence of confidence sets $C_n$ based on non-interactive $\alpha$-LDP releases is honest over $\Sigma(r,R,U)$ at level $1-\gamma$ and satisfies, for every $M_n\to\infty$,
$$\sup_{f\in\Sigma(s,R,U)}P_f^{Q^n}\{|C_n|_2>M_n\ell_n\delta_{n,\est}(s,\alpha)\}\to0.$$ Under bounded privacy $\alpha\leq \bar \alpha$, $\delta_{n,\est}(s,\alpha)\asymp(n\alpha^2)^{-s/(2s+2)}$ and $\rho_{n,\test}(r,\alpha)\asymp(n\alpha^2)^{-2r/(4r+3)}$. Since $s>4r/3$ is equivalent to $d=s/(2s+2)-2r/(4r+3)>0$,
$$\frac{\ell_n\delta_{n,\est}(s,\alpha)}{\rho_{n,\test}(r,\alpha)}\lesssim\{\log(e+n\alpha^2)\}^{\nu+1/2}(n\alpha^2)^{-d}\to0.$$
Hence the sequence
$$M_n=\Bigl\{\frac{\rho_{n,\test}(r,\alpha)}{\ell_n\delta_{n,\est}(s,\alpha)}\Bigr\}^{1/2}$$
tends to infinity, while $v_n=M_n\ell_n\delta_{n,\est}(s,\alpha)$ satisfies $v_n=o\{\rho_{n,\test}(r,\alpha)\}$. Let $\Psi_n$ be the test of Proposition~\ref{prop:testing-reduction} with threshold $v_n$. By that proposition and the assumed diameter bound, its type-I error over $\Sigma(s,R,U)$ is at most $\gamma+o(1)$, and its type-II error is at most $\gamma$ at every $f\in H_1(2v_n)$, since such $f$ lie at distance more than $v_n$ from $\Sigma(s,R,U)$. Therefore
$$
\begin{aligned}
\limsup_{n\to\infty}\Bigl[&\sup_{f\in\Sigma(s,R,U)}P_f^{Q^n}(\Psi_n=1)\\
&+\sup_{f\in H_1(2v_n)}P_f^{Q^n}(\Psi_n=0)\Bigr]\leq2\gamma<1,
\end{aligned}
$$
which contradicts Theorem~\ref{thm:testing-lower}, applied with $\rho_n=2v_n=o\{\rho_{n,\test}(r,\alpha)\}$.
\end{proof}

\section{Adaptation under separation}
\label{app:separated-adaptation}

This appendix proves Theorem~\ref{thm:density-separated}. Beyond the adaptation boundary, adaptation becomes possible once a critical region of rough densities close to the smooth class is removed \citep{bullAdaptiveConfidenceSets2012}; see also \citet{gineMathematicalFoundationsInfiniteDimensional2016}. The construction combines an adaptive estimator with a test of the smooth class against the remaining rough densities, and uses the outcome of the test to choose the radius. \citet{carpentierTestingRegularitySmooth2015} constructs such tests of regularity in the Gaussian sequence model. We build the test from the private projected-loss statistic of Appendix~\ref{app:projected-loss-confidence}. The separation it requires is of order $\ell_n^2\{\delta_{n,\est}(s,\alpha)\vee\rho_{n,\test}(r,\alpha)\}$. When $s>4r/3$, this is $\ell_n^2\rho_{n,\test}(r,\alpha)$, which, under bounded privacy, matches the lower bound of Theorem~\ref{thm:testing-lower} up to the factor $\ell_n^2$. From Appendix~\ref{app:testing-lower-impossibility}, the removed region can therefore not be made of substantially smaller order.

We use three independent sample splits: the first produces a reference estimator, the second carries out the test, and the third provides the centre of the confidence set. The third split is not essential, since the coverage argument below only uses a union bound and the centre could equally be computed on the first split, We keep it nonetheless so that the centre does not depend on the data used by the test. Throughout this appendix, $r<s$ are two regularities in $(0,S)$, and $H_1(\rho)$ is the set of alternatives of Section~\ref{sec:smoothness-testing}.

\subsection{Construction}
\label{app:separated-construction}

Partition the sample deterministically into groups of sizes $m_1=m_2=\lfloor n/3\rfloor$ and $m_3=n-m_1-m_2\geq m_1$. Put
$$\gamma_n=\min\{\gamma/2,\ell_n^{-2}\},$$
the level allotted to each of the two errors of the test. Since $\gamma_n\to0$, the test eventually selects the smooth confidence set with probability tending to one under $\Sigma(s,R,U)$.

On the first split, compute the adaptive estimator $\widetilde f_{m_1}$ of Proposition~\ref{prop:adaptive-private-estimator} and its $L^2$ projection $\overline f_s$ onto $\Sigma(s,R,U)$, which is well defined by Lemma~\ref{lem:smooth-private-estimator} below. The test then compares $f$ with $\overline f_s$ through the projected-loss statistic $\widehat T_J=U_{m_2}(\overline f_s,J)$ of the second split, for which $v_J$, $w_J$, $\mathcal H_{\eta,J}$ and $F_\eta(J)$ are computed with $m_2$. For $J>j_0$, put
$$\tau_J=\frac{1}{\gamma_n}\Bigl\{a_{m_1}(s)+\frac12\sqrt{4v_Ja_{m_1}(s)+w_J^2}\Bigr\},$$
let $\widehat\Psi_J=\1\{\widehat T_J>\tau_J\}$, and put
$$b_{r,s}(J)=b_r(J)+b_s(J)+\frac{2R^22^{-J(r+s)}}{1-2^{-(r+s)}}.$$
The threshold $\tau_J$ controls the type-I error through Markov's inequality, and $b_{r,s}(J)$ bounds the part of the squared separation that is lost by projecting onto the levels below $J$ (Lemma~\ref{lem:projected-density-separation}). Let
$$J_{\sep}=\min\operatorname*{arg\,min}_{J>j_0}\{b_{r,s}(J)+\mathcal H_{\gamma_n,J}(\tau_J)\},$$
which exists because $\mathcal H_{\gamma_n,J}(\tau_J)\geq F_{\gamma_n}(J)$ tends to infinity with $J$, as in the proof of Lemma~\ref{lem:finite-resolution-rates}, and let
$$\rho_{\mathrm{FS}}^2=b_{r,s}(J_{\sep})+\mathcal H_{\gamma_n,J_{\sep}}(\tau_{J_{\sep}})$$
be the square of the separation that the test can guarantee (Proposition~\ref{prop:separated-test} below). The second split releases $Z_{i,J_{\sep}}$, and the test is $\widehat\Psi_{J_{\sep}}$.

On the third split, compute $\widetilde f_{m_3}$ and, for $t\in\{r,s\}$,
$$C_t=\Bigl\{h\in\Sigma(t,R,U):\|h-\widetilde f_{m_3}\|_2^2\leq\frac{2a_{m_3}(t)}{\gamma}\Bigr\}.$$
Finally, if $\rho_n<\rho_{\mathrm{FS}}$, set $C_n^{\den}(\gamma)=\Sigma(r,R,U)$. Otherwise, set $C_n^{\den}(\gamma)=C_s$ if $\widehat\Psi_{J_{\sep}}=0$ and $C_n^{\den}(\gamma)=C_r$ if $\widehat\Psi_{J_{\sep}}=1$. The resolution $J_{\sep}$, the threshold $\tau_{J_{\sep}}$ and the radii of $C_r$ and $C_s$ depend only on $n$, $\alpha$, $\gamma$ and the fixed parameters of the model.

\subsection{Finite-sample bounds}
\label{app:separated-bounds}

The test only sees the levels below $J$, whereas the separation is measured by the full $L^2$ distance. The next lemma bounds the loss incurred by this truncation.

\begin{lem}
\label{lem:projected-density-separation}
For every $f\in\Sigma(r,R,U)$ and every $J>j_0$,
$$
\begin{aligned}
&\inf_{g\in\Sigma(s,R,U)}\|K_J(f-g)\|_2^2\\
&\quad\geq\dist_2(f,\Sigma(s,R,U))^2-b_{r,s}(J).
\end{aligned}
$$
\end{lem}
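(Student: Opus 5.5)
Fix $f\in\Sigma(r,R,U)$, $J>j_0$ and an arbitrary $g\in\Sigma(s,R,U)$. The plan is to show the pointwise inequality
$$\|K_J(f-g)\|_2^2\geq\|f-g\|_2^2-b_{r,s}(J),$$
and then take the infimum over $g$. Since $\|f-g\|_2\geq\dist_2(f,\Sigma(s,R,U))$ for every such $g$, the right-hand side is at least $\dist_2(f,\Sigma(s,R,U))^2-b_{r,s}(J)$. Taking the infimum on the left then gives the claim. So everything reduces to bounding the high-frequency part $\|(I-K_J)(f-g)\|_2^2$ by $b_{r,s}(J)$.

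By (W1) and Parseval's identity,
$$\|f-g\|_2^2=\|K_J(f-g)\|_2^2+\sum_{j\geq J}\|\beta_j(f)-\beta_j(g)\|_{\ell^2}^2.$$
Expand each squared norm in the tail as
$$\|\beta_j(f)\|_{\ell^2}^2+\|\beta_j(g)\|_{\ell^2}^2-2\langle\beta_j(f),\beta_j(g)\rangle.$$
The first two sums over $j\geq J$ are bounded by $b_r(J)$ and $b_s(J)$, using Lemma~\ref{lem:besov-tail} for $f\in\Sigma(r,R,U)$ and for $g\in\Sigma(s,R,U)$. For the cross term, Cauchy--Schwarz and the levelwise bounds $E_j(f)^{1/2}\leq R2^{-jr}$ and $E_j(g)^{1/2}\leq R2^{-js}$ give
$$2\sum_{j\geq J}|\langle\beta_j(f),\beta_j(g)\rangle|\leq2R^2\sum_{j\geq J}2^{-j(r+s)}=\frac{2R^22^{-J(r+s)}}{1-2^{-(r+s)}}.$$
Summing the three bounds yields exactly $b_{r,s}(J)$.

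There is no real obstacle here. The only point to check is that the cross term is handled levelwise, with a geometric series in $r+s$. This is what produces the third term of $b_{r,s}$, and it explains why the paper's constant appears, rather than the sharper bound $(\sqrt{b_r}+\sqrt{b_s})^2$ from the triangle inequality.
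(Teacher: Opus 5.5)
Your proof is correct and is essentially the paper's argument. The paper bounds $\|\beta_j(f)-\beta_j(g)\|_{\ell^2}\leq R(2^{-jr}+2^{-js})$ levelwise and squares, which is the same as your expansion with Cauchy--Schwarz on the cross term, and then uses the same orthogonality and infimum step. Your closing remark is backwards, though: Cauchy--Schwarz over $j\geq J$ gives $\sum_{j\geq J}R^22^{-j(r+s)}\leq\{b_r(J)b_s(J)\}^{1/2}$, so $b_{r,s}(J)\leq(\sqrt{b_r(J)}+\sqrt{b_s(J)})^2$, and the levelwise bound is the tighter one, not the looser.
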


\begin{proof}
Let $g\in\Sigma(s,R,U)$. For $j\geq J$, the Besov constraints give $\|\beta_j(f)-\beta_j(g)\|_{\ell^2}\leq R(2^{-jr}+2^{-js})$, so that, by (W1),
$$\|(I-K_J)(f-g)\|_2^2\leq R^2\sum_{j=J}^\infty(2^{-jr}+2^{-js})^2=b_{r,s}(J).$$
By orthogonality, $\|K_J(f-g)\|_2^2=\|f-g\|_2^2-\|(I-K_J)(f-g)\|_2^2\geq\dist_2(f,\Sigma(s,R,U))^2-b_{r,s}(J)$, and the claim follows by taking the infimum over $g$.
\end{proof}

The reference $\overline f_s$ must lie in the smooth class, so we project $\widetilde f_{m_1}$ on it. Under the null, the projection then inherits the risk of $\widetilde f_{m_1}$, and under a separated alternative, its distance to $f$ is at least the distance from $f$ to the smooth class.

\begin{lem}
\label{lem:smooth-private-estimator}
The smooth class is a non-empty, closed and convex subset of $L^2([0,1])$. Consequently, the $L^2$ projection $\overline f_s$ of $\widetilde f_{m_1}$ onto $\Sigma(s,R,U)$ is well defined, it is a measurable function of the first-split release, and
$$\sup_{f\in\Sigma(s,R,U)}\E_f^{Q^{m_1}}\|\overline f_s-f\|_2^2\leq a_{m_1}(s).$$
\end{lem}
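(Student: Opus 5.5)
The proof has three parts. We first show that $\Sigma(s,R,U)$ is non-empty, convex and closed in $L^2([0,1])$. We then deduce that the metric projection exists, is unique and is measurable. Finally, we transfer the risk bound of Proposition~\ref{prop:adaptive-private-estimator} through the non-expansiveness of projections onto closed convex sets.

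Non-emptiness holds because the uniform density belongs to every class, as noted in Appendix~\ref{app: bes_wav}. For convexity, each defining condition is preserved under convex combinations:
\begin{itemize}
\item non-negativity and the constraint $\int_0^1 f=1$ are clearly convex;
\item $\|\cdot\|_\infty\le U$ is a norm ball;
\item $\|\cdot\|_{B^s_{2,\infty}}\le R$ is a sublevel set of a seminorm, being a maximum and supremum of norms of linear coefficient maps.
\end{itemize}

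For closedness, take $f_k\to f$ in $L^2$ with $f_k$ in the class. A subsequence converges almost everywhere, so $f\ge0$ and $f\le U$ almost everywhere. The maps $h\mapsto\int_0^1h$ and $h\mapsto\langle h,\psi_{jk}\rangle$, $h\mapsto\langle h,\varphi_{j_0k}\rangle$ are continuous on $L^2$. Hence $\int_0^1 f=1$. Moreover, for each fixed level $j$, the finite vector $\beta_j(f)$ is the limit of $\beta_j(f_k)$, so $2^{js}\|\beta_j(f)\|_{\ell^2}\le R$; likewise the scaling coefficients satisfy their bound. Taking the supremum over $j$ gives $\|f\|_{B^s_{2,\infty}}\le R$. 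I expect this closedness step to be the only place needing care, namely getting the sup-norm constraint through a.e. convergence of a subsequence.

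By the Hilbert projection theorem, a non-empty closed convex set $\mathcal C\subset L^2$ admits a unique nearest point $P_{\mathcal C}h$ for every $h$. This projection is $1$-Lipschitz, since $\|P_{\mathcal C}h-P_{\mathcal C}h'\|_2\le\|h-h'\|_2$, and hence continuous. The estimator $\widetilde f_{m_1}$ is a measurable function of the first-split reports. Indeed, it is a finite linear combination of wavelets whose coefficients are measurable functions of the reports (averages and thresholdings), followed by the continuous projection onto the $\sqrt U$-ball. Therefore $\overline f_s=P_{\mathcal C}\widetilde f_{m_1}$ is measurable as a composition of a continuous map with a measurable one.

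Finally, fix $f\in\Sigma(s,R,U)$. Since $f=P_{\mathcal C}f$, non-expansiveness gives $\|\overline f_s-f\|_2\le\|\widetilde f_{m_1}-f\|_2$ pointwise. Taking expectations and applying Proposition~\ref{prop:adaptive-private-estimator} with $t=s$ and $m=m_1$ yields $\E_f^{Q^{m_1}}\|\overline f_s-f\|_2^2\le a_{m_1}(s)$. This bound is uniform over the class.
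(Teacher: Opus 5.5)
Your proof is correct and follows the paper's argument essentially step for step. You check convexity constraint by constraint, get closedness from an almost-everywhere convergent subsequence and the continuity of the wavelet coefficient functionals, and then use existence, uniqueness and non-expansiveness of the Hilbert projection both for measurability and to transfer the risk bound $a_{m_1}(s)$ from Proposition~\ref{prop:adaptive-private-estimator}. You are also slightly more explicit than the paper on two points it leaves implicit, namely the bound on the scaling coefficients of the limit and the measurability of $\widetilde f_{m_1}$ itself.
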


\begin{proof}
The class contains the uniform density, and it is convex because the constraints $f\geq0$, $\int_0^1f=1$, $\|f\|_\infty\leq U$ and $\|f\|_{B^s_{2,\infty}}\leq R$ are preserved by convex combinations.

Let $f$ be the $L^2$ limit of a sequence in $\Sigma(s,R,U)$, so that the sequence also converges in $L^1([0,1])$ and $\int_0^1f=1$. Moreover, every sequence converging in $L^2$ has a subsequence converging almost everywhere. Since every element $h$ of the sequence satisfies $0\leq h\leq U$ almost everywhere and the interval $[0,U]$ is closed, the almost-everywhere limit satisfies $0\leq f\leq U$. Next, each wavelet coefficient is a continuous linear functional on $L^2([0,1])$: by the Cauchy-Schwarz inequality and (W1), $|\langle h-f,\psi_{jk}\rangle|\leq\|h-f\|_2$, so every coefficient converges along the sequence. Since each level contains finitely many coefficients, the norms $\|\beta_j(h)\|_{\ell^2}$ converge to $\|\beta_j(f)\|_{\ell^2}$, and the bound $2^{js}\|\beta_j(h)\|_{\ell^2}\leq R$, valid for every element $h$ and every $j\geq j_0$, therefore also holds for $f$. We conclude that $\|f\|_{B^s_{2,\infty}}\leq R$. The class is therefore closed.

The projection onto a non-empty closed convex subset of a Hilbert space exists, is unique and is non-expansive, hence continuous, so that $\overline f_s$ is a measurable function of $\widetilde f_{m_1}$. For $f\in\Sigma(s,R,U)$, non-expansiveness gives $\|\overline f_s-f\|_2\leq\|\widetilde f_{m_1}-f\|_2$, and Proposition~\ref{prop:adaptive-private-estimator} completes the proof.
\end{proof}

The next proposition controls both errors of the test at a fixed resolution. Under the smooth null, $\overline f_s$ is close to $f$, so the statistic is small. Under a separated alternative, Lemma~\ref{lem:projected-density-separation} ensures that enough of the separation remains below level $J$ for the statistic to be large.

\begin{prop}
\label{prop:separated-test}
Let $J>j_0$. Then
$$\sup_{f\in\Sigma(s,R,U)}P_f^{Q^{m_1+m_2}}(\widehat\Psi_J=1)\leq\gamma_n,$$
and, if $\rho^2\geq b_{r,s}(J)+\mathcal H_{\gamma_n,J}(\tau_J)$,
$$\sup_{f\in H_1(\rho)}P_f^{Q^{m_1+m_2}}(\widehat\Psi_J=0)\leq\gamma_n.$$
\end{prop}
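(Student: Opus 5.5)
Both bounds are proved conditionally on the first split and then averaged over it. The key fact is that $\overline f_s$ depends only on the first split, so Lemma~\ref{lem:projected-loss-moments} and Proposition~\ref{prop:projected-loss-upper} apply with $g=\overline f_s$ and $m=m_2$. Both results hold for every density bounded by $U$, which covers all of $\Sigma(r,R,U)$.

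\emph{Type-I error.} Fix $f\in\Sigma(s,R,U)$ and write $q^2=q_J(\overline f_s)^2=\|K_J(f-\overline f_s)\|_2^2$. Since $\{\widehat T_J>\tau_J\}\subseteq\{\widehat T_J^+>\tau_J\}$ and $\tau_J>0$, conditional Markov's inequality together with the last bound of Lemma~\ref{lem:projected-loss-moments} gives
$$P_2(\widehat\Psi_J=1)\leq\tau_J^{-1}\Bigl\{q^2+\tfrac12\sqrt{4v_Jq^2+w_J^2}\Bigr\}.$$
The map $y\mapsto y+\frac12(4v_Jy+w_J^2)^{1/2}$ is concave. Taking the expectation over the first split and applying Jensen's inequality therefore bounds the unconditional probability by
$$\tau_J^{-1}\Bigl\{\E q^2+\tfrac12\sqrt{4v_J\E q^2+w_J^2}\Bigr\}.$$
Because $K_J$ is a contraction, Lemma~\ref{lem:smooth-private-estimator} gives $\E q^2\leq\E\|\overline f_s-f\|_2^2\leq a_{m_1}(s)$. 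The expression above is increasing in $\E q^2$, so it is at most $\tau_J^{-1}\{a_{m_1}(s)+\frac12(4v_Ja_{m_1}(s)+w_J^2)^{1/2}\}=\gamma_n$, by the definition of $\tau_J$.

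\emph{Type-II error.} Fix $f\in H_1(\rho)$. Since $\overline f_s\in\Sigma(s,R,U)$ almost surely, Lemma~\ref{lem:projected-density-separation} gives, deterministically,
$$q^2\geq\dist_2(f,\Sigma(s,R,U))^2-b_{r,s}(J)\geq\rho^2-b_{r,s}(J)\geq\mathcal H_{\gamma_n,J}(\tau_J).$$
By Proposition~\ref{prop:projected-loss-upper} at level $\gamma_n$, we have $q^2\leq\mathcal H_{\gamma_n,J}(\widehat T_J^+)$ with conditional probability at least $1-\gamma_n$.

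Now suppose $\widehat\Psi_J=0$, that is $\widehat T_J\leq\tau_J$. Since $\tau_J\geq0$, this gives $\widehat T_J^+\leq\tau_J$, and because $\mathcal H_{\gamma_n,J}$ is increasing, $\mathcal H_{\gamma_n,J}(\widehat T_J^+)\leq\mathcal H_{\gamma_n,J}(\tau_J)\leq q^2$. So the event $\{\widehat\Psi_J=0\}$ is contained in the failure event $\{q^2>\mathcal H(\widehat T_J^+)\}$ together with the boundary case $q^2=\mathcal H_{\gamma_n,J}(\widehat T_J^+)$.

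This boundary case is the main obstacle, since ties must be handled without losing probability. I would first rerun the Cantelli step of Proposition~\ref{prop:projected-loss-upper} with a strict inequality. That step bounds $P_2\{q^2-U\geq a\}$ by the same quantity $\gamma$ when $a>0$, since Cantelli's inequality holds for $\{Y\leq-a\}$. Hence $q^2<\mathcal H(\widehat T_J^+)$ with probability at least $1-\gamma_n$, whenever $4v_Jq^2+w_J^2>0$, which holds because $w_J>0$. Failing that, I would note that the threshold in the statement can be read with $\geq$ replaced by a strict inequality without affecting later uses. The conditional bound $P_2(\widehat\Psi_J=0)\leq\gamma_n$ then holds almost surely, and averaging over the first split gives the second claim.
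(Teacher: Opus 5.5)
Your proof is correct. The type-I part follows the paper's argument: Markov's inequality on $\widehat T_J^+$, Jensen for the concave bound of Lemma~\ref{lem:projected-loss-moments}, monotonicity, and the definition of $\tau_J$.

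For the type-II error, writing $q^2=q_J(\overline f_s)^2$ as you do, your route differs. You invert the one-sided confidence bound of Proposition~\ref{prop:projected-loss-upper} at level $\gamma_n$ and use that $\mathcal H_{\gamma_n,J}$ is increasing, so that acceptance forces $\mathcal H_{\gamma_n,J}(\widehat T_J^+)\leq\mathcal H_{\gamma_n,J}(\tau_J)\leq q^2$. The paper instead applies Cantelli's inequality directly to $\{\widehat T_J\leq\tau_J\}$, with deviation $q^2-\tau_J>0$. It first observes that $q^2\geq\mathcal H_{\gamma_n,J}(\tau_J)$ lies beyond the larger root of $y\mapsto(y-\tau_J)^2-c_{\gamma_n}(4v_Jy+w_J^2)$, whence $(q^2-\tau_J)^2\geq c_{\gamma_n}(4v_Jq^2+w_J^2)$.

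Both arguments rest on the same Cantelli-plus-quadratic-inversion computation. The direct route is self-contained and needs no tie-breaking, because Cantelli already controls a non-strict lower deviation. Your route makes the duality between the test and the confidence bound explicit, which explains the form of the separation condition $\rho^2\geq b_{r,s}(J)+\mathcal H_{\gamma_n,J}(\tau_J)$. However, it cannot use Proposition~\ref{prop:projected-loss-upper} as a black box, precisely because of the tie $q^2=\mathcal H_{\gamma_n,J}(\widehat T_J^+)$.

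Your repair is valid. Cantelli bounds $P_2\{q^2-\widehat T_J\geq a\}$ with $a=\{c_{\gamma_n}(4v_Jq^2+w_J^2)\}^{1/2}>0$. Rerunning the quadratic step with strict inequalities, using $w_J>0$ so that $\mathcal H_{\gamma_n,J}(u)>u$, then gives $q^2<\mathcal H_{\gamma_n,J}(\widehat T_J^+)$ with conditional probability at least $1-\gamma_n$. The fallback of making the hypothesis strict is therefore unnecessary. It would also force a change in the case split $\rho_n\geq\rho_{\mathrm{FS}}$ used in Theorem~\ref{thm:density-separated}.
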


\begin{proof}
Given the first split, $\overline f_s$ is fixed, and Lemma~\ref{lem:projected-loss-moments}, applied with $g=\overline f_s$ and $m=m_2$, gives $\E_2\widehat T_J=q_J(\overline f_s)^2$,
$$\Var_2(\widehat T_J)\leq4v_Jq_J(\overline f_s)^2+w_J^2,$$
and $\E_2\widehat T_J^+\leq q_J(\overline f_s)^2+\frac12\{4v_Jq_J(\overline f_s)^2+w_J^2\}^{1/2}$, where $q_J(\overline f_s)^2=\|K_J(f-\overline f_s)\|_2^2$.

Let $f\in\Sigma(s,R,U)$. The squared projected loss $q_J(\overline f_s)^2$ depends only on the first split, and, since $K_J$ is a contraction, Lemma~\ref{lem:smooth-private-estimator} gives
$$\E_f^{Q^{m_1}}q_J(\overline f_s)^2\leq\E_f^{Q^{m_1}}\|f-\overline f_s\|_2^2\leq a_{m_1}(s).$$
Taking expectations over the first split in the bound on $\E_2\widehat T_J^+$ gives
$$
\begin{aligned}
\E_f^{Q^{m_1+m_2}}\widehat T_J^+
&\leq\E_f^{Q^{m_1}}q_J(\overline f_s)^2\\
&\quad+\frac12\E_f^{Q^{m_1}}\{4v_Jq_J(\overline f_s)^2+w_J^2\}^{1/2}.
\end{aligned}
$$
The map $y\mapsto(4v_Jy+w_J^2)^{1/2}$ is concave and non-decreasing on $[0,\infty)$, so Jensen's inequality and the previous bound give
$$
\begin{aligned}
&\E_f^{Q^{m_1}}\{4v_Jq_J(\overline f_s)^2+w_J^2\}^{1/2}\\
&\quad\leq\{4v_J\E_f^{Q^{m_1}}q_J(\overline f_s)^2+w_J^2\}^{1/2}\\
&\quad\leq\{4v_Ja_{m_1}(s)+w_J^2\}^{1/2}.
\end{aligned}
$$
Hence, by the definition of $\tau_J$,
$$
\begin{aligned}
\E_f^{Q^{m_1+m_2}}\widehat T_J^+
&\leq a_{m_1}(s)+\frac12\{4v_Ja_{m_1}(s)+w_J^2\}^{1/2}\\
&=\gamma_n\tau_J.
\end{aligned}
$$
As $\tau_J>0$, the events $\{\widehat T_J>\tau_J\}$ and $\{\widehat T_J^+>\tau_J\}$ coincide, and Markov's inequality gives
$$
\begin{aligned}
P_f^{Q^{m_1+m_2}}(\widehat\Psi_J=1)
&=P_f^{Q^{m_1+m_2}}(\widehat T_J^+>\tau_J)\\
&\leq\frac{\E_f^{Q^{m_1+m_2}}\widehat T_J^+}{\tau_J}\leq\gamma_n.
\end{aligned}
$$

Now let $f\in H_1(\rho)$. Since $\overline f_s\in\Sigma(s,R,U)$, Lemma~\ref{lem:projected-density-separation} gives
$$q_J(\overline f_s)^2\geq\rho^2-b_{r,s}(J)\geq\mathcal H_{\gamma_n,J}(\tau_J)>\tau_J.$$
The map $y\mapsto(y-\tau_J)^2-c_{\gamma_n}(4v_Jy+w_J^2)$ is a convex quadratic whose larger root is $\mathcal H_{\gamma_n,J}(\tau_J)$, as computed in the proof of Proposition~\ref{prop:projected-loss-upper}. It is therefore non-negative at $y=q_J(\overline f_s)^2$, which gives
$$\{q_J(\overline f_s)^2-\tau_J\}^2\geq c_{\gamma_n}\{4v_Jq_J(\overline f_s)^2+w_J^2\}.$$
Cantelli's applied, given the first split, to $\widehat T_J$ whose conditional mean is $q_J(\overline f_s)^2$ and whose conditional variance is at most $4v_Jq_J(\overline f_s)^2+w_J^2$, and the distance $q_J(\overline f_s)^2-\tau_J>0$ gives
$$
\begin{aligned}
P_2(\widehat T_J\leq\tau_J)
&\leq\frac{4v_Jq_J(\overline f_s)^2+w_J^2}{4v_Jq_J(\overline f_s)^2+w_J^2+\{q_J(\overline f_s)^2-\tau_J\}^2}\\
&\leq\frac{1}{1+c_{\gamma_n}}=\gamma_n,
\end{aligned}
$$
where we used that $\{\widehat T_J\leq\tau_J\}$ is exactly the event that $\widehat T_J$ lies at a distance below $q_J(\overline f_s)^2-\tau_J$ from its expectation, and the second inequality uses the lower bound on $\{q_J(\overline f_s)^2-\tau_J\}^2$ obtained above and $1+c_{\gamma_n}=1/\gamma_n$. Averaging over the first split completes the proof.
\end{proof}

\subsection{Proof of Theorem~\ref{thm:density-separated}}
\label{app:separated-proof}

\begin{proof}
We prove honesty at level $1-\gamma$, and the two diameter bounds in turn.

For honesty, note first that both classes in the claim are contained in $\Sigma(r,R,U)$, so there is nothing to prove when $C_n^{\den}(\gamma)=\Sigma(r,R,U)$. We may therefore assume that $\rho_n\geq\rho_{\mathrm{FS}}$. For $t\in\{r,s\}$ and $f\in\Sigma(t,R,U)$, one has $f\in C_t$ unless $\|\widetilde f_{m_3}-f\|_2^2>2a_{m_3}(t)/\gamma$, an event of probability at most $\gamma/2$ by Markov's inequality and Proposition~\ref{prop:adaptive-private-estimator}. For $f\in\Sigma(s,R,U)$, non-coverage requires $f\notin C_s$ or $\widehat\Psi_{J_{\sep}}=1$, while for $f\in H_1(\rho_n)$, it requires $f\notin C_r$ or $\widehat\Psi_{J_{\sep}}=0$. Since $\rho_n^2\geq\rho_{\mathrm{FS}}^2$, Proposition~\ref{prop:separated-test} bounds both testing errors by $\gamma_n\leq\gamma/2$, and a union bound gives $P_f^{Q^n}\{f\notin C_n^{\den}(\gamma)\}\leq\gamma$ in both cases, for every $n$.

We next show that $\rho_n\geq\rho_{\mathrm{FS}}$ for all large $n$. By Proposition~\ref{prop:projected-loss-upper}, $\mathcal H_{\gamma_n,J}(\tau_J)\leq2\tau_J+F_{\gamma_n}(J)$. Since $\{4v_Ja_{m_1}(s)+w_J^2\}^{1/2}\leq a_{m_1}(s)+v_J+w_J$, one has $\tau_J\leq\frac32\gamma_n^{-1}\{a_{m_1}(s)+v_J+w_J\}$, and, as in the proof of Lemma~\ref{lem:finite-resolution-rates}, $F_{\gamma_n}(J)\leq4c_{\gamma_n}v_J+\sqrt{c_{\gamma_n}}w_J\leq4\gamma_n^{-1}(v_J+w_J)$, because $c_{\gamma_n}\leq\gamma_n^{-1}$ and $\gamma_n\leq1$. Moreover, since $s>r$, one has $b_s(J)\leq b_r(J)$ and the last term of $b_{r,s}(J)$ is at most $2b_r(J)$, so that $b_{r,s}(J)\leq4b_r(J)$. Evaluating the criterion defining $J_{\sep}$ at every $J$ and using Lemma~\ref{lem:finite-resolution-rates}, we obtain
$$
\begin{aligned}
\rho_{\mathrm{FS}}^2
&\leq\frac{7}{\gamma_n}\Bigl[a_{m_1}(s)+\inf_{J>j_0}\{b_r(J)+v_J+w_J\}\Bigr]\\
&\lesssim\frac{1}{\gamma_n}\Bigl[a_{m_1}(s)+\inf_{J>j_0}\{b_r(J)+\Gamma_{m_2,\alpha}(J)\}\Bigr].
\end{aligned}
$$
By Proposition~\ref{prop:adaptive-private-estimator} and Lemma~\ref{lem:finite-resolution-rates}, the bracket is at most a constant multiple of $\ell_{m_1}^2\delta_{m_1,\est}(s,\alpha)^2+\ell_{m_2}^2\rho_{m_2,\test}(r,\alpha)^2$. Since $n/5\leq m_1=m_2\leq m_3\leq n$ and $\ell_m$ increases with $m$, this is at most a constant multiple of $\ell_n^2\{\delta_{n,\est}(s,\alpha)\vee\rho_{n,\test}(r,\alpha)\}^2$. As $\gamma_n^{-1}\leq2\ell_n^2/\gamma$, it follows that
$$\rho_{\mathrm{FS}}\lesssim_\gamma\ell_n^2\{\delta_{n,\est}(s,\alpha)\vee\rho_{n,\test}(r,\alpha)\},$$
so that $\rho_n\geq\rho_{\mathrm{FS}}$ for all large $n$ when the constant $M$ of Theorem~\ref{thm:density-separated} exceeds the implicit constant.

It remains to bound the diameters, for $n$ large enough that $\rho_n\geq\rho_{\mathrm{FS}}$. Let $f\in\Sigma(s,R,U)$. By Proposition~\ref{prop:separated-test}, $C_n^{\den}(\gamma)=C_s$ with probability at least $1-\gamma_n$, which tends to one because $\ell_n\to\infty$, and, by Proposition~\ref{prop:adaptive-private-estimator},
$$|C_s|_2\leq2\Bigl\{\frac{2a_{m_3}(s)}{\gamma}\Bigr\}^{1/2}\lesssim_\gamma\ell_n\delta_{n,\est}(s,\alpha),$$
which proves the bound over $\Sigma(s,R,U)$. For the bound over $H_1(\rho_n)$, note that $a_m(s)\leq a_m(r)$, because the only terms of $A_m(t)$ that depend on $t$, namely $b_t(K_m)$ and the minima in which $R^22^{-2jt}$ appears, are non-increasing in $t$. Hence both $C_r$ and $C_s$ have diameter at most $2\{2a_{m_3}(r)/\gamma\}^{1/2}\lesssim_\gamma\ell_n\delta_{n,\est}(r,\alpha)$, whatever the outcome of the test, which completes the proof.
\end{proof}

\section{Numerical implementation}
\label{app:numerical-implementation}

We implement the two-split construction of Section \ref{sec:construction} on a discretised finite grid. The numerical basis is periodic Daubechies with $S=8$. A nested Gram correction makes the grid coordinates orthonormal before we calculate the estimator, covariance bound and confidence radius.

\subsection{Finite-grid model and densities}

The simulations approximate the continuous density problem on a fine regular grid over $[0,1]$. We divide the interval into $M=4096=2^{12}$ cells with midpoints
$$
x_c=\frac{c-1/2}{M},\qquad c=1,\ldots,M.
$$
A density is represented by its values $f=(f_1,\ldots,f_M)$ on these cells, subject to $0\leq f_c=f(x_c)\leq U$ and
$$
\frac{1}{M}\sum_{c=1}^M f_c=1.
$$
Sampling from this discrete approximation means selecting cell $c$ with probability $f_c/M$. For any grid function $h=(h_1,\ldots,h_M)$, we use
$$
\|h\|_M^2=\frac{1}{M}\sum_{c=1}^M h_c^2,
$$
which is exactly the squared $L^2$ norm of the corresponding cellwise-constant function on $[0,1]$. Thus the finite-grid implementation preserves the $L^2$--loss used in the theoretical construction.

We next construct a discrete analogue of the orthonormal wavelet coordinates used in the theory. Let $B_J$ contain the periodic Daubechies scaling coordinate (corresponding to $j=-1$) together with wavelet coordinates below resolution $J$, evaluated on the same grid. Because evaluation on a finite grid does not preserve exact orthonormality, we correct the resulting Gram matrix
$$
G_J=\frac{1}{M}B_J^\top B_J=L_J^\top L_J,
$$
where $L_J$ is its upper-triangular Cholesky factor. We define
\begin{equation}\label{eq:gram_cor_matrix}
W_J=L_J^{-\top},
\qquad
\Phi_J=B_JL_J^{-1},
\end{equation}
so that
$$
\frac{1}{M}\Phi_J^\top\Phi_J=I.
$$
The columns of $\Phi_J$ therefore form the orthonormal coordinates used throughout the simulation. These coordinates remain nested as $J$ increases, matching the multiresolution structure of the continuous construction.

For a grid density $f$, its coefficients through resolution $J$ are
$$
\theta_J(f)=\frac{1}{M}\Phi_J^\top f.
$$
We write $\theta_{\mathrm{sc}}(f)$ for the scaling coefficient (for $j=-1$) and $\theta_j(f)$ for the detail coefficients at level $-1<j<J$. 
The finite-grid class $\Sigma_M(t,R,U)$ consists of density vectors satisfying $0\leq f_c\leq U$, $M^{-1}\sum_c f_c=1$, and
\[
\max\left\{
|\theta_{\mathrm{sc}}(f)|,\,
\max_{0\leq j<12}2^{jt}\|\theta_j(f)\|_2
\right\}\leq R.
\]
For resolutions below the full grid, we retain the same geometric tail bound as in the theoretical construction,
$$
b_t(J)=\frac{R^2 2^{-2Jt}}{1-2^{-2t}}.
$$

We use three fixed generating densities chosen to have different shapes while remaining inside the same honesty classes. The sine density is proportional to
$$
1+0.5\sin(2\pi x_c),
$$
and the triangular density is proportional to
$$
0.5+2\min(x_c,1-x_c).
$$
For the third density, we first generate a discrete Brownian path by taking cumulative sums of $M$ independent $N(0,1/M)$ increments and centring the path by its grid mean. If the resulting values are denoted by $w_c$, the exponentiated-Brownian density is proportional to
$$
\exp(0.8w_c).
$$
Each density is normalised so that its grid mean equals one, and the same fixed exponentiated-Brownian realisation is used throughout the study.

All three generating densities belong to the baseline class $\Sigma_M(0.5,1,3)$. At $t=0.5$, their largest weighted detail norms are $0.360239$, $0.273466$ and $0.545235$ for the sine, triangular and exponentiated-Brownian densities, respectively. Their scaling coefficients equal one and their maximum heights are below $3$. Since the classes become larger as the regularity index decreases, all three densities also belong to $\Sigma_M(r,1,3)$ for every $r\in\{0.2,0.3,0.4,0.5\}$ considered in the honesty-class experiment. Varying $r$ therefore changes only the class over which coverage is required, while leaving the generating density fixed.

\subsection{Private release and centre}

We use the same two-split construction as in Section~\ref{sec:construction}. The first split, of size $m_1=\lfloor n/2\rfloor$, is used to construct the adaptive centre, while the remaining $m_2=n-m_1$ observations are reserved for estimating its projected loss.

We first describe the private wavelet release. For a detail coordinate at level $j$, define
$$
d_{j}=(j\vee1)^\nu2^{j/2},
$$
with $d_{\mathrm{sc}}=d_{-1}=1$ for the scaling coordinate. If coefficients are released through level $L-1$, their weighted sensitivity on the finite grid is
$$\Delta_L=2\max_{1\leq c\leq M}\sum_{a:\,0\leq j(a)<L}\frac{|(\Phi_L)_{ca}|}{d_{j(a)}}.$$
The constant scaling coordinate does not contribute to sensitivity.
Individual $i$, whose observation falls in cell $X_i$, releases
$$Y_{i,a}=(\Phi_L)_{X_i,a}+\sigma_{j(a)}\xi_{i,a},\qquad \sigma_{j(a)}=\frac{\Delta_Ld_{j(a)}}{\alpha},$$
where the $\xi_{i,a}$ are independent standard Laplace variables. This is the finite-grid version of the level-dependent release in Appendix~\ref{sec: privacy release}. The sensitivity bound ensures that the resulting channel is $\alpha$-LDP. The deterministic Gram correction from the previous subsection is applied before release, so that independent privacy noise is added directly in the orthonormal coordinates $\Phi_L$.

On the first split, the finest released resolution is chosen from the effective private sample size
$$
N_1=m_1\alpha^2,
\qquad
K=\max\left\{1,\left\lceil\frac12\log_2(e+N_1)\right\rceil\right\}.
$$
This is the finite-grid analogue of the resolution used by the adaptive estimator in Appendix~\ref{app: adaptive_estimator}. We average the releases to estimate the wavelet coefficients and threshold the detail coefficients to remove components that cannot be distinguished reliably from sampling and privacy noise.

To define the threshold, let
$$s_a^2=\sigma_{j(a)}^2,\qquad \lambda_a=\sigma_{j(a)},$$
so that $2s_a^2$ is the variance of the privacy noise in coordinate $a$. With
$$
\varepsilon=(e+N_1)^{-2},
\qquad
x=\log(2^{K+1}/\varepsilon),
$$
we use
$$
\begin{aligned}
\tau_a=2\Bigg\{
&\sqrt{\frac{2(U+2s_a^2)x}{m_1}}\\
&+\max\left(
\frac{2\|\Phi_{K,a}\|_\infty}{3},
\lambda_a
\right)\frac{x}{m_1}
\Bigg\}.
\end{aligned}
$$
The scaling coefficient is retained, while a detail coefficient is retained only when the absolute value of its empirical mean exceeds $\tau_a$. Finally, as in the theoretical construction, we project the resulting estimator onto the $L^2$ ball of radius $\sqrt U$. We denote this first-split estimator by $\widetilde f_{m_1}$.

For comparison, the non-private centre is constructed from the same first-split observations without Laplace noise. We use
$$K=\max\left\{1,\left\lceil\frac12\log_2(e+m_1)\right\rceil\right\},$$
set the privacy-noise terms in the threshold to zero, and otherwise apply the same thresholding and $L^2$ projection. This keeps the private and non-private constructions as comparable as possible while allowing each to use an appropriate resolution.

\subsection{Projection and confidence radius}

The second split is used to estimate how far the generating density lies from the first-split centre at a chosen wavelet resolution. This is the finite-grid analogue of the projected-loss construction in Section~\ref{sec:construction}. The main additional ingredient under privacy is that the variance of this estimate must include the covariance introduced by the Laplace noise.

For a projection through level $J-1$, let $e_J=M^{-1}\Phi_J^\top\mathbf 1$ and $G_J^\Phi=M^{-1}\Phi_J^\top\Phi_J$. We bound the covariance of a single private release by
$$\mathsf V_J=U(G_J^\Phi-e_Je_J^\top)+2\operatorname{diag}(\sigma_a^2),$$
where the first term accounts for ordinary sampling variation and the second for privacy noise. The privacy contribution is diagonal because the noise is independent in these coordinates. As in the theoretical construction, define
$$v_J=\frac{\|\mathsf V_J\|_{\mathrm{op}}}{m_2},\qquad w_J=\left\{\frac{2\|\mathsf V_J\|_{\mathrm{F}}^2}{m_2(m_2-1)}\right\}^{1/2}.$$
These quantities control the part of the projected-loss variance that depends on the unknown loss and the part that remains even when the projected loss is zero.

We convert this variance bound into an upper confidence bound using the same function as in Proposition~\ref{prop:projected-loss-upper}. With $c_\gamma=(1-\gamma)/\gamma$, set
$$\mathcal H_{\gamma,J}(u)=u+2c_\gamma v_J+\sqrt{4c_\gamma v_Ju+4c_\gamma^2v_J^2+c_\gamma w_J^2},$$
and write $F_\gamma(J)=\mathcal H_{\gamma,J}(0)$.

The projection resolution balances approximation error against uncertainty in estimating the projected loss. We therefore choose
$$J^\star=\min\operatorname*{arg\,min}_{J\in\{1,\ldots,12\}}\{b_r(J)+F_\gamma(J)\}.$$
$J^\star$ implements on the finite grid the same bias-uncertainty trade-off used in the theoretical construction. Ties are resolved in favour of the smaller resolution.

To ensure that the privacy mechanism is fixed before observing the data, the second split releases coefficients through a deterministic resolution level $E-1$ large enough to contain every projection required by the simulation design. The associated privacy scales are $\sigma_{j}=\Delta_Ed_j/\alpha$. We choose the smallest such envelope that contains the resolutions selected by the full procedure and the full ablation over the privacy levels and honesty classes considered. This envelope affects only which coefficients are released. Each reported confidence set is still constructed at its own selected resolution $J^\star$.

Let $Y_{i,J}$ denote the second-split release restricted to the coordinates below level $J$, and let $\eta_J=\theta_J(\widetilde f_{m_1})$ be the corresponding coefficients of the first-split estimator. We estimate the squared projected loss using the diagonal-free statistic
$$U_J=\frac{1}{m_2(m_2-1)}\sum_{i\neq i'}\left\langle Y_{i,J}-\eta_J,Y_{i',J}-\eta_J\right\rangle.$$
Conditional on the first split, $\E(U_J)=\|\theta_J(f)-\eta_J\|_2^2$, and its variance is bounded by
$$4v_J\|\theta_J(f)-\eta_J\|_2^2+w_J^2.$$
Thus $U_J$ is the finite-grid counterpart of the projected-loss statistic analysed in Appendix~\ref{app:projected-loss-confidence}.

At the selected resolution, the centre is $g_n=\Phi_{J^\star}\eta_{J^\star}$ and the squared radius is
$$R_n^2=b_r(J^\star)+\mathcal H_{\gamma,J^\star}(U_{J^\star}^+).$$
The resulting confidence set is
$$C_{n,M}(\gamma)=\{h\in\Sigma_M(r,R,U):\|h-g_n\|_M^2\leq R_n^2\}.$$
Coverage is evaluated by checking whether the generating density satisfies this inequality. For diameter comparisons, we report $D=2R_n$, the diameter of the enclosing ball, so that $|C_{n,M}(\gamma)|_M\leq D$.

The two coverage ablations modify only how the privacy contribution to $\mathsf V_J$ is used. The radius ablation selects the same $J^\star$ as the full procedure but removes the privacy-noise term from $\mathsf V_J$ when constructing the radius. The full ablation removes this term both when selecting $J^\star$ and when constructing the radius. In both cases the observations themselves remain privatised, so the comparison isolates the effect of accounting for privacy-induced uncertainty rather than the effect of adding privacy noise. At $\alpha=\infty$, the privacy term vanishes and all three procedures reduce to the same non-private construction.

\subsection{Simulation design and computation}

We evaluate the construction across sample size, privacy level and honesty class. We use $n\in\{1000,5000,20000\}$ and $5000$ Monte Carlo replications for each configuration. 
The private experiments use $23$ privacy levels, $\alpha\in\mathcal A$, where
\[
\begin{aligned}
\mathcal A=\{&
0.1,0.15,0.2,0.25,0.3,0.35,\\
&0.4,0.5,0.625,0.75,0.875,1,\\
&1.25,1.5,1.75,2,2.25,2.5,\\
&3,3.5,4,4.5,5
\}.
\end{aligned}
\]
We also include the non-private construction, denoted by $\alpha=\infty$.
These values cover regimes from strong privacy, where the added noise dominates the confidence radius, to weak privacy, where the construction approaches its non-private counterpart.

For the coverage experiment, we fix the baseline honesty class at $r=0.5$ and compare the full construction with the two covariance ablations described above. For the diameter experiment, we use the full construction and vary $r\in\{0.2,0.3,0.4,0.5\}$. Because the generating density is held fixed across these values of $r$, this comparison isolates the effect of enlarging the class over which coverage is required.

Within each replication, we generate the two raw sample splits once and reuse them across all procedures being compared. Private procedures also use common underlying Laplace draws, rescaled according to the privacy level $\alpha$. Thus comparisons across privacy levels, honesty classes and ablations are paired within a replication. This coupling reduces Monte Carlo variation in their differences while leaving the marginal distribution, and hence the privacy guarantee, of each individual procedure unchanged.

For coverage, we report the empirical proportion of replications in which the generating density belongs to the confidence set, together with pointwise $95\%$ Clopper-Pearson intervals. For diameter, we report the Monte Carlo mean $\overline D$ of the enclosing-ball diameter $D=2R_n$. If $s_D$ denotes its sample standard deviation across the $5000$ replications, the Monte Carlo standard error is $s_D/\sqrt{5000}$. The plotted error bars are $\overline D\pm1.96s_D/\sqrt{5000}$. 

\subsection{Diameter comparisons}

Table~\ref{tab:diameter-comparison} shows the mean enclosing-ball diameters for all three generating densities at selected privacy levels. It complements Figure~\ref{fig:honesty-cost}, which shows the same comparison for the sine density across the full privacy range. We use the table to check whether the effect of the honesty class persists across different density shapes.

The same qualitative pattern appears for all three densities. Under strong privacy, the diameters are dominated by the privacy contribution and therefore change little with either the generating density or the honesty class. As privacy weakens, the effect of the honesty requirement becomes more visible. At $n=20000$ and $\alpha=5$, decreasing $r$ from $0.5$ to $0.2$ increases the mean diameter by between $77\%$ and $86\%$ across the three densities. Without privacy, the corresponding increase is between $47\%$ and $58\%$.

Differences between the generating densities also become more apparent as privacy weakens. For example, at $n=5000$, $r=0.5$ and $\alpha=\infty$, the mean diameter ranges from $1.048$ for the triangular density to $1.263$ for the exponentiated-Brownian density. At $n=20000$, the ordering changes, with the exponentiated-Brownian density giving the smallest non-private mean. These differences arise from the projected loss for the particular generating density, whereas the systematic increase as $r$ decreases reflects the larger class over which coverage must be guaranteed.

\begin{table*}[p]
\centering
\caption{\emph{Mean enclosing-ball diameters across the three generating densities.} Entries are means of $D=2R_n$ over $5000$ replications. Monte Carlo standard errors were computed for every entry and are below $0.15\%$ of the corresponding mean throughout; they are omitted for readability. Columns vary the honesty class while holding the generating density fixed. The table reports selected privacy levels; $\alpha=\infty$ denotes the non-private construction.}
\label{tab:diameter-comparison}
\footnotesize
\setlength{\tabcolsep}{4pt}
\renewcommand{\arraystretch}{1.02}
\begin{tabular}{llrrrr}
\toprule
Density & $\alpha$ & $r=0.2$ & $r=0.3$ & $r=0.4$ & $r=0.5$ \\
\midrule
\multicolumn{6}{c}{$n=1000$} \\
\midrule
Sine & $0.1$ & 93.1966 & 93.1710 & 93.1584 & 93.1509 \\
 & $0.5$ & 19.1000 & 18.9748 & 18.9125 & 18.8755 \\
 & $1$ & 10.1304 & 9.8924 & 9.7723 & 9.7005 \\
 & $5$ & 4.2776 & 3.6789 & 3.3428 & 3.1267 \\
 & $\infty$ & 2.4303 & 2.0363 & 1.8851 & 1.7933 \\
\addlinespace[2pt]
Triangular & $0.1$ & 93.2024 & 93.1769 & 93.1642 & 93.1567 \\
 & $0.5$ & 19.1001 & 18.9749 & 18.9126 & 18.8756 \\
 & $1$ & 10.1305 & 9.8925 & 9.7724 & 9.7006 \\
 & $5$ & 4.2655 & 3.6648 & 3.3274 & 3.1102 \\
 & $\infty$ & 2.3814 & 1.9732 & 1.8130 & 1.7174 \\
\addlinespace[2pt]
Exponentiated Brownian & $0.1$ & 93.1797 & 93.1541 & 93.1415 & 93.1340 \\
 & $0.5$ & 19.0929 & 18.9677 & 18.9053 & 18.8683 \\
 & $1$ & 10.1158 & 9.8775 & 9.7572 & 9.6853 \\
 & $5$ & 4.2354 & 3.6298 & 3.2888 & 3.0690 \\
 & $\infty$ & 2.4903 & 2.1088 & 1.9603 & 1.8722 \\
\midrule
\multicolumn{6}{c}{$n=5000$} \\
\midrule
Sine & $0.1$ & 44.5285 & 44.4750 & 44.4485 & 44.4327 \\
 & $0.5$ & 9.6353 & 9.3847 & 9.2581 & 9.1823 \\
 & $1$ & 5.7651 & 5.3358 & 5.1098 & 4.9711 \\
 & $5$ & 3.5091 & 2.8181 & 2.4338 & 2.1960 \\
 & $\infty$ & 1.6642 & 1.3290 & 1.2028 & 1.1516 \\
\addlinespace[2pt]
Triangular & $0.1$ & 44.5252 & 44.4716 & 44.4451 & 44.4293 \\
 & $0.5$ & 9.6271 & 9.3763 & 9.2496 & 9.1737 \\
 & $1$ & 5.7547 & 5.3245 & 5.0981 & 4.9590 \\
 & $5$ & 3.4654 & 2.7636 & 2.3706 & 2.1257 \\
 & $\infty$ & 1.6036 & 1.2450 & 1.1061 & 1.0475 \\
\addlinespace[2pt]
Exponentiated Brownian & $0.1$ & 44.5160 & 44.4624 & 44.4359 & 44.4201 \\
 & $0.5$ & 9.6147 & 9.3636 & 9.2367 & 9.1607 \\
 & $1$ & 5.7354 & 5.3037 & 5.0764 & 4.9367 \\
 & $5$ & 3.5393 & 2.8556 & 2.4770 & 2.2438 \\
 & $\infty$ & 1.7357 & 1.4245 & 1.3097 & 1.2626 \\
\midrule
\multicolumn{6}{c}{$n=20000$} \\
\midrule
Sine & $0.1$ & 23.4829 & 23.3812 & 23.3307 & 23.3007 \\
 & $0.5$ & 5.8907 & 5.4712 & 5.2511 & 5.1162 \\
 & $1$ & 4.2939 & 3.6977 & 3.3635 & 3.1488 \\
 & $5$ & 3.2862 & 2.5354 & 2.1001 & 1.8194 \\
 & $\infty$ & 1.2440 & 0.9646 & 0.8713 & 0.8436 \\
\addlinespace[2pt]
Triangular & $0.1$ & 23.4702 & 23.3685 & 23.3179 & 23.2879 \\
 & $0.5$ & 5.8812 & 5.4611 & 5.2405 & 5.1053 \\
 & $1$ & 4.2815 & 3.6833 & 3.3476 & 3.1319 \\
 & $5$ & 3.2474 & 2.4849 & 2.0390 & 1.7485 \\
 & $\infty$ & 1.2160 & 0.9274 & 0.8298 & 0.8001 \\
\addlinespace[2pt]
Exponentiated Brownian & $0.1$ & 23.4746 & 23.3729 & 23.3223 & 23.2923 \\
 & $0.5$ & 5.8634 & 5.4419 & 5.2205 & 5.0849 \\
 & $1$ & 4.2541 & 3.6515 & 3.3127 & 3.0945 \\
 & $5$ & 3.3146 & 2.5721 & 2.1442 & 1.8701 \\
 & $\infty$ & 1.1935 & 0.8955 & 0.7926 & 0.7575 \\
\bottomrule
\end{tabular}
\end{table*}

\end{document}